\newcommand{\poc}[1][dr]{\save*!/#1+1.6pc/#1:(1,-1)@^{|-}\restore}
\newcommand{\Cof}{\mathcal{C}of}
\newcommand{\Fib}{\mathcal{F}ib}
\newcommand{\NN}{\mathbb{N}}
\newcommand{\LL}{\mathbb{L}}
\newcommand{\RR}{\mathbb{R}}
\newcommand{\EE}{\mathbb{E}}
\def\A{\mathcal{A}}
\def\C{\mathcal{C}}
\def\D{\mathcal{D}}
\def\G{\mathcal{G}}
\def\F{\mathcal{F}}
\def\SS{\mathcal{S}}
\def\M{\mathcal{M}}
\def\N{\mathcal{N}}
\def\W{\mathcal{W}}
\def\I{\mathcal{I}}
\def\cS{\mathcal{S}}
\def\T{\mathcal{T}}
\def\P{\mathcal{P}}
\def\J{\mathcal{J}}
\def\R{\mathcal{R}}
\renewcommand{\L}{\mathcal{L}}
\def\U{\mathcal{U}}
\def\W{\mathcal{W}}
\def\bS{\mathbf{S}}
\def\rH{\mathrm{H}}
\def\Om{{\Omega}}
\def\Del{{\Delta}}
\def\Sig{{\Sigma}}
\def\vphi{\varphi}
\newcommand{\HSwarrow}{\kern0.05ex\vcenter{\hbox{\Huge\ensuremath{\Swarrow}}}\kern0.05ex}
\newcommand{\hSwarrow}{\kern0.05ex\vcenter{\hbox{\huge\ensuremath{\Swarrow}}}\kern0.05ex}
\newcommand{\LLSwarrow}{\kern0.05ex\vcenter{\hbox{\LARGE\ensuremath{\Swarrow}}}\kern0.05ex}
\newcommand{\LSwarrow}{\kern0.05ex\vcenter{\hbox{\Large\ensuremath{\Swarrow}}}\kern0.05ex}
\newcommand{\HSearrow}{\kern0.05ex\vcenter{\hbox{\Huge\ensuremath{\Searrow}}}\kern0.05ex}
\newcommand{\hSearrow}{\kern0.05ex\vcenter{\hbox{\huge\ensuremath{\Searrow}}}\kern0.05ex}
\newcommand{\LLSearrow}{\kern0.05ex\vcenter{\hbox{\LARGE\ensuremath{\Searrow}}}\kern0.05ex}
\newcommand{\LSearrow}{\kern0.05ex\vcenter{\hbox{\Large\ensuremath{\Searrow}}}\kern0.05ex}
\newcommand{\HDownarrow}{\kern0.05ex\vcenter{\hbox{\Huge\ensuremath{\Downarrow}}}\kern0.05ex}
\newcommand{\hDownarrow}{\kern0.05ex\vcenter{\hbox{\huge\ensuremath{\Downarrow}}}\kern0.05ex}
\newcommand{\LLDownarrow}{\kern0.05ex\vcenter{\hbox{\LARGE\ensuremath{\Downarrow}}}\kern0.05ex}
\newcommand{\LDownarrow}{\kern0.05ex\vcenter{\hbox{\Large\ensuremath{\Downarrow}}}\kern0.05ex}
\newcommand{\HUparrow}{\kern0.05ex\vcenter{\hbox{\Huge\ensuremath{\Uparrow}}}\kern0.05ex}
\newcommand{\hUparrow}{\kern0.05ex\vcenter{\hbox{\huge\ensuremath{\Uparrow}}}\kern0.05ex}
\newcommand{\LLUparrow}{\kern0.05ex\vcenter{\hbox{\LARGE\ensuremath{\Uparrow}}}\kern0.05ex}
\newcommand{\LUparrow}{\kern0.05ex\vcenter{\hbox{\Large\ensuremath{\Uparrow}}}\kern0.05ex}
\newtheorem{thm}{Theorem}[subsection]
\newtheorem{cor}[thm]{Corollary}
\newtheorem{lem}[thm]{Lemma}
\newtheorem{pro}[thm]{Proposition}
\numberwithin{equation}{subsection}
\numberwithin{thm}{subsection}
\theoremstyle{definition}
\newtheorem{define}[thm]{Definition}
\newtheorem{exam}[thm]{Example}
\newtheorem{examples}[thm]{Examples}
\newtheorem{defn}[thm]{Definition}
\theoremstyle{remark}
\newtheorem{rem}[thm]{Remark}
\DeclareMathOperator{\hocolim}{hocolim}
\DeclareMathOperator{\colim}{colim}
\DeclareMathOperator{\Id}{Id}
\DeclareMathOperator{\id}{id}
\DeclareFontFamily{OT1}{pzc}{}
\DeclareFontShape{OT1}{pzc}{m}{it}{<-> s * [1.10] pzcmi7t}{}
\DeclareMathAlphabet{\mathpzc}{OT1}{pzc}{m}{it}
\DeclareMathOperator{\cof}{cof}
\DeclareMathOperator{\fib}{fib}
\DeclareMathOperator{\Alg}{Alg}
\DeclareMathOperator{\ModCat}{ModCat}
\DeclareMathOperator{\sGr}{sGr}
\DeclareMathOperator{\op}{op}
\DeclareMathOperator{\Map}{Map}
\DeclareMathOperator{\df}{def}
\DeclareMathOperator{\Set}{Set}
\DeclareMathOperator{\Cat}{Cat}
\DeclareMathOperator{\AdjCat}{AdjCat}
\DeclareMathOperator{\Ob}{Ob}
\DeclareMathOperator{\Sp}{Sp}
\DeclareMathOperator{\Ho}{Ho}
\DeclareMathOperator{\ev}{ev}
\DeclareMathOperator{\fin}{fin}
\DeclareMathOperator{\dg}{dg}
\DeclareMathOperator{\Tw}{Tw}
\DeclareMathOperator{\Joy}{Joy}
\DeclareMathOperator{\Spectra}{Spectra}
\DeclareMathOperator{\Reedy}{Reedy}
\DeclareMathOperator{\Exc}{Exc}
\DeclareMathOperator{\codom}{codom}
\DeclareMathOperator{\hQ}{Q}
\def\x{\overset}
\def\Hom{\textrm{Hom}}
\newcommand{\tgpd}{\kern0.05ex\vcenter{\hbox{\footnotesize\ensuremath{2}}}\kern0.05ex\mathcal{G}pd} 
\def\rar{\rightarrow}
\def\lrar{\longrightarrow}
\def\hrar{\hookrightarrow}
\newcommand{\adj}{\mathrel{\substack{\longrightarrow \\[-.6ex] \x{\upvdash}{\longleftarrow}}}}
\def\ovl{\overline}
\def\bar{\overline}
\title{The tangent bundle of a model category}
\author{Yonatan Harpaz}
\email{harpaz@math.univ-paris13.fr}
\address{Institut Galil\'ee\\ Universit\'e Paris 13\\ 99 avenue J.B. Cl\'ement\\ 93430 Villetaneuse\\ France.}
\author{Joost Nuiten}
\email{j.j.nuiten@uu.nl}
\address{Mathematical Institute\\ Utrecht University\\ P.O. Box 80010\\ 3508 TA Utrecht\\ The
Netherlands.}
\author{Matan Prasma}
\email{mtnprsm@gmail.com}
\address{Faculty of Mathematics\\ University of Regensburg\\ Universitatsstrase 31, 93040\\ Germany.}
\date{}
\begin{document}

\begin{abstract}
This paper studies the homotopy theory of parameterized spectrum objects in a model category from a global point of view. More precisely, for a model category $\M$ satisfying suitable conditions, we construct a relative model category $\T\M\lrar \M$, called the tangent bundle, whose fibers are models for spectra in the various over-categories of $\M$, and which presents the $\infty$-categorical tangent bundle.
Moreover, the tangent bundle $\T\M$ inherits an enriched model structure when such a structure exists on $\M$. This additional structure is used in subsequent work to identify the tangent bundles of algebras over an operad and of enriched categories.
\end{abstract}

\maketitle

\tableofcontents

\section{Introduction}

This paper is part of an on going work concerning the abstract cotangent complex and Quillen cohomology. In \cite{HNP17a} and \cite{HNP17b}, the authors study the tangent categories of algebras over an operad and of enriched categories, as well as their cotangent complex. The work in \textit{loc. cit.} was carried out in a model-categorical framework. The subject of this note, which was splitted from \cite{HNP17a}, is the development of such a framework, which we believe is of independent interest. 

The theory of the (spectral) cotangent complex has been developed in the works of~\cite{Sch97}, \cite{BM05} and most recently in~\cite{Lur14} in the setting of $\infty$-categories: associated to an $\infty$-category $\C$ and an object $A$ in $\C$ is an $\infty$-category $\Sp(\C_{/A})$ of \textbf{spectrum objects} in $\C_{/A}$. The $\infty$-category $\Sp(\C_{/A})$ can be expressed as the $\infty$-category of \textbf{reduced excisive functors} from pointed finite spaces to $\C_{/A}$ or in other words, \textbf{linear functors} from finite pointed spaces to $\C_{/A}$ in the sense of Goodwillie (\cite{Goo91}). Following \cite{Lur14}, we will denote $\T_{A}\C:=\Sp(\C_{/A})$ and refer to it as the \textbf{tangent $\infty$-category of $\C$ at $A$}. 

The tangent $\infty$-category $\T_A\C$ can be considered as a homotopy theoretical analogue of the category of abelian group objects over $A$, which are classically known as \textbf{Beck modules} (\cite{Bec67}).  
In particular, when $\C$ is a presentable $\infty$-category, there is a natural ``linearization" functor $$\Sigma^\infty_+:\C_{/A}\lrar \T_A\C, $$ 
which leads to the notion of the \textbf{cotangent complex} $L_{A}:=\Sigma^{\infty}_+(\id:A\lrar A)\in \T_A\C$. As in the classical case (\cite{Qui67}), the Quillen cohomology groups of $A$ with coefficients in $E\in\T_A\C$ are then given by the formula $\rH^n_{\hQ}(A,E):=\pi_0\Map_{\T_A\C}(L_A,E[n])$. 

In order to view the cotangent complex $L_A$ mentioned above as a functor in $A$ we consider the \textbf{tangent bundle of $\C$}, $\T\C:=\int_A\Sp(\C_{/A}) \lrar \C$ and define the cotangent complex functor $L:\C\lrar \T\C$ as the composite $$\C\overset{\Delta}{\lrar} \C^{\Delta^1}=\int_A\C_{/A}\lrar \int_A\T_A\C=\T\C.$$ Once such a view-point is set, many of the properties of Quillen cohomology become a consequence of corresponding properties of the cotangent complex. One such example is that Quillen cohomology always admits a \textbf{transitivity sequence}, as in the classical case of Andr\'{e}-Quillen cohomology (\cite{Qui70}). 
          
The purpose of this paper is to develop model-categorical tools to study the cotangent complex formalism in the vein of \cite[\S 7.3]{Lur14}. 
We will start by describing a model $\Sp(\M)$ for the stabilization of a model category $\M$, which does not require the loop-suspension adjunction to arise from a Quillen pair. This is useful, for example, for model categories of enriched categories, or enriched operads, which do not offer natural choices for such a Quillen adjunction (see \cite{HNP17b}). We then describe how the usual machinery of suspension- and $\Omega$-spectrum replacements arises in our setting. When applied to pointed objects in $\M_{/A}$, the model above gives the \textbf{tangent model category} $\T_A\M$ at $A$. 

In the second half of the paper we use a similar approach to construct a model $\pi:\T\M \lrar \M$ for the \textbf{tangent bundle} of $\M$, namely, a presentation of the $\infty$-categorical projection
$$
\int_{A\in \M_\infty}\T_A\M_\infty\lrar \M_\infty
$$
whose fibers are the tangent $\infty$-categories of the $\infty$-category $\M_\infty$ underlying $\M$. Our main results are that $\T\M$ enjoys particularly favorable properties on the model categorical level: it exhibits $\T\M$ as a \textbf{relative model category} over $\M$, in the sense of~\cite{HP}, and forms a \textbf{model fibration} when restricted to the fibrant objects of $\M$. Furthermore, when $\M$ is tensored over a suitable model category $\bS$, the tangent bundle $\T\M$ inherits this structure, and thus becomes enriched in $\bS$. This enrichment plays a key role in the description of the tangent categories of algebras and enriched categories as in \cite{HNP17a} and\cite{HNP17b}, and may be useful for other purposes as well.    

  
 
\section{Tangent model categories}
In this section we discuss a particular model-categorical presentation for the homotopy theory of spectra in a -- sufficiently nice -- model category $\M$, as well as a model $\T\M$ for the homotopy theory of spectra parameterized by the various objects of $\M$. The model category $\Sp(\M)$ of spectrum objects in $\M$ presents the universal stable $\infty$-category associated to the $\infty$-category underlying $\M$. When $\M$ is a simplicial model category, one can use the suspension and loop functors induced by the simplicial (co)tensoring to give explicit models for spectrum objects in $\M$ by means of Bousfield-Friedlander spectra or symmetric spectra (see \cite{Hov}). In non-simplicial contexts this can be done as soon as one chooses a Quillen adjunction realizing the loop-suspension adjunction.

The main purpose of this section is to give a uniform description of stabilization which does not depend on a simplicial structure or any other specific model for the loop-suspension adjunction. We will consequently follow a variant of the approach suggested by Heller in~\cite{Hel}, and describe spectrum objects in terms of $(\NN\times \NN)$-diagrams (see also~\cite[\S 8]{Lur06}). This has the additional advantage of admitting a straightforward `global' analogue $\T\M$, which will focus on in \S\ref{s:tanbun} and \S\ref{s:tanten}.

\subsection{Spectrum objects}\label{s:stabilization}
Suppose that $\M$ is a weakly pointed model category, i.e.\ the homotopy category of $\M$ admits a zero object. If $X \in \M$ is a cofibrant object and $Y \in \M$ is a fibrant object, then a commuting square 
\begin{equation}\label{e:sqr}\vcenter{\xymatrix{
X\ar[r]\ar[d] & Z\ar[d]\\
Z'\ar[r] & Y
}}
\end{equation}
in which the objects $Z$ and $Z'$ are weak zero objects is equivalent to the datum of a map $\Sigma X \lrar Y$, or equivalently, a map $X\lrar \Omega Y$.
The square~\eqref{e:sqr} is homotopy coCartesian if and only if the corresponding map $\Sig X \lrar Y$ is an equivalence, and is homotopy Cartesian if and only if the adjoint map $X \lrar \Om Y$ is an equivalence. 

Using this, one can describe (pre-)spectra in terms of $(\NN\times\NN)$-diagrams
$$\xymatrix@C=1.3pc@R=1.3pc{X_{00}\ar[r]\ar[d] & X_{01} \ar[r]\ar[d] & \cdots\\ X_{10}\ar[r]\ar[d] & X_{11}\ar[r]\ar[d] & \cdots \\
\vdots & \vdots &}$$
in which all the off-diagonal entries are weak zero objects. Indeed, the diagonal squares
\begin{equation}\label{e:diag}\vcenter{\xymatrix{
X_{n, n}\ar[r]\ar[d] & X_{n, n+1}\ar[d]\\
X_{n+1, n}\ar[r] & X_{n+1, n+1}
}}\end{equation}
describe the structure maps of the pre-spectrum. 
\begin{define}\label{d:spectra}
Let $\M$ be a weakly pointed model category. We will say that an $(\NN\times\NN)$-diagram $X_{\bullet,\bullet}: \NN\times\NN\lrar \M$ is
\begin{enumerate}[(1)]
\item 
a \textbf{pre-spectrum} if all its off-diagonal entries are weak zero objects in $\M$;
\item 
an \textbf{$\Om$-spectrum} if it is a pre-spectrum and for each $n\geq 0$, the diagonal square \eqref{e:diag} is homotopy Cartesian;
\item 
a \textbf{suspension spectrum} if it is a pre-spectrum and for each $n\geq 0$, the diagonal square \eqref{e:diag} is homotopy coCartesian.
\end{enumerate}
\end{define}
The category $\NN\times \NN$ has the structure of a \textbf{Reedy category} (see~\cite[Definition 5.2.1]{Hov99}) in which all maps are increasing. It follows that $\M^{\NN\times\NN}$ carries the Reedy model structure, which agrees with the projective model structure.

\begin{define}\label{d:stable-equiv}
Let $\M$ be a weakly pointed model category. We will say that a map $f: X \lrar Y$ in $\M^{\NN \times \NN}$ is a \textbf{stable equivalence} if for every $\Om$-spectrum $Z$ the induced map on derived mapping spaces
$$ \Map^h(Y,Z) \lrar \Map^h(X,Z) $$
is a weak equivalence. A stable equivalence between $\Om$-spectra is always a levelwise equivalence.
\end{define}

\begin{defn}\label{d: stabilization model category}
Let $\M$ be a weakly pointed model category. The \textbf{stable model structure} on the category $\M^{\NN\times\NN}$ is -- if it exists -- the model structure whose 
\begin{itemize}
\item cofibrations are the Reedy cofibrations.
\item weak equivalences are the stable equivalences.
\end{itemize}
When it exists, we will denote this model category by $\Sp(\M)$ and refer to it as the \textbf{stabilization} of $\M$.
\end{defn}

To place the terminology of Definition~\ref{d: stabilization model category} in context, recall that a model category $\M$ is called \textbf{stable} if it is weakly pointed and $\Sigma:\Ho(\M)\adj \Ho(\M):\Omega$ is an equivalence of categories (cf.~\cite{Hov}). Equivalently, $\M$ is stable if the underlying $\infty$-category $\M_{\infty}$ (see Section \ref{s:oo-stab}) is stable in the sense of~\cite[\S 1]{Lur14}, i.e.~if $\M_\infty$ is pointed and the adjunction of $\infty$-categories $\Sigma: \M_\infty \adj \M_\infty : \Om$ is an adjoint equivalence. This follows immediately from the fact that an adjunction between $\infty$-categories is an equivalence if and only if the induced adjunction on homotopy categories is an equivalence.

\begin{rem}\label{r:lurie}
Alternatively, one can characterize the stable model categories as those weakly pointed model categories in which a square 
is homotopy Cartesian if and only if it is homotopy coCartesian (see~\cite[\S 1]{Lur14}).
\end{rem}

\begin{pro}\label{l:stability}
Let $\M$ be a weakly pointed model category. Then $\Sp(\M)$ is -- if it exists -- a stable model category.
\end{pro}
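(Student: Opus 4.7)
My plan is to verify the criterion in Remark~\ref{r:lurie}, showing that the derived loop-suspension adjunction $L\Sigma \dashv R\Omega$ on $\Ho(\Sp(\M))$ is an adjoint equivalence. The key geometric idea is that in a ``stabilized'' model category, shifting a pre-spectrum along the diagonal becomes invertible, which is precisely the condition for $R\Omega$ to be invertible.

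The first step is to identify the fibrant objects of $\Sp(\M)$. Since $\Sp(\M)$ shares cofibrations with the Reedy model structure on $\M^{\NN \times \NN}$ but has more weak equivalences, it is a (left Bousfield-type) localization, and by Definition~\ref{d:stable-equiv} its fibrant objects are precisely the Reedy fibrant $\Omega$-spectra. I would then observe that the levelwise loop functor in $\M$ preserves weak zero objects and homotopy Cartesian squares, hence preserves the class of Reedy fibrant $\Omega$-spectra; consequently, on fibrant objects the derived loop $R\Omega$ on $\Sp(\M)$ is modelled by the levelwise $\Omega$ in $\M$.

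The second step is the key observation. Consider the shift endofunctor $s^*\colon \M^{\NN \times \NN} \to \M^{\NN \times \NN}$, $(s^* X)_{i,j} := X_{i+1,j+1}$, obtained by precomposition with the diagonal shift $(i,j) \mapsto (i+1,j+1)$. This preserves Reedy fibrations, levelwise equivalences, and the classes of pre-spectra, $\Omega$-spectra and suspension spectra, and hence descends to an endofunctor of $\Ho(\Sp(\M))$ (restricting to the equivalent full subcategory of Reedy fibrant $\Omega$-spectra). The homotopy Cartesian condition \eqref{e:diag} on a Reedy fibrant $\Omega$-spectrum $X$ supplies a natural levelwise weak equivalence $X_{n,n} \xrightarrow{\simeq} \Omega X_{n+1,n+1}$ for each $n$, which assembles into a stable equivalence
\[
X \;\xrightarrow{\;\simeq\;}\; R\Omega(s^* X).
\]
The same construction applied to $R\Omega X$ (itself a fibrant $\Omega$-spectrum) yields a stable equivalence $X \xrightarrow{\simeq} s^*(R\Omega X)$. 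Together these show that $R\Omega \circ s^* \cong \id$ and $s^* \circ R\Omega \cong \id$ on $\Ho(\Sp(\M))$, so $R\Omega$ is an autoequivalence; its left adjoint $L\Sigma$ is then automatically its inverse, and the adjunction is an adjoint equivalence.

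The main technical obstacle is the careful management of the Bousfield-localization nature of $\Sp(\M)$: justifying that $R\Omega$ is modelled by levelwise $\Omega$ on Reedy fibrant $\Omega$-spectra (rather than requiring further fibrant replacement within the stable structure), and that the shift $s^*$ --- which preserves stable equivalences between $\Omega$-spectra but not necessarily all stable equivalences --- yields a well-defined functor on $\Ho(\Sp(\M))$. Both issues are resolved by the observation that $\Ho(\Sp(\M))$ is equivalent to the homotopy category of Reedy fibrant $\Omega$-spectra with respect to levelwise weak equivalences, on which $s^*$ and the levelwise loop functor are manifestly well-defined.
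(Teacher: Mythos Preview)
Your approach is essentially the same as the paper's: both exhibit the diagonal shift as an inverse to $\Omega$ on $\Ho(\Sp(\M))$, using the $\Omega$-spectrum condition $Z_{n,n}\simeq\Omega Z_{n+1,n+1}$. The only real difference is packaging. The paper sets up the shift as a Quillen adjunction $[-1]\dashv[1]$ (with $[1]$ your $s^*$), checking that $[-1]$ preserves Reedy cofibrations and that $[1]$ preserves $\Omega$-spectra; this immediately yields a derived functor $\RR[1]$ on $\Ho(\Sp(\M))$ and the commutation $\RR[1]\circ\Omega\simeq\Omega\circ\RR[1]$ for free, since both are derived right adjoints. That device replaces your discussion of why $s^*$ descends to $\Ho(\Sp(\M))$ and why $R\Omega$ is computed levelwise on Reedy fibrant $\Omega$-spectra. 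One small wrinkle: your sentence ``the same construction applied to $R\Omega X$ \ldots yields a stable equivalence $X\xrightarrow{\simeq} s^*(R\Omega X)$'' is not literally what that construction produces (it would give $R\Omega X\simeq R\Omega\,s^*(R\Omega X)$), though the claimed equivalence $X\simeq s^*(R\Omega X)$ is correct by the same direct levelwise check.
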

\begin{proof}
Observe that $\Sp(\M)$ comes equipped with an adjoint pair of \textbf{shift functors}
$$\xymatrix{[-n]:\Sp(\M)\ar[r]<1ex> & \Sp(\M):[n]\ar[l]<1ex> & n\geq 0}$$
given by $X[n]_{\bullet\bullet}:=X_{\bullet+n,\bullet+n}$ and $X[-n]_{\bullet,\bullet}=X_{\bullet-n,\bullet-n}$. Here $X_{i,j}=\emptyset$ when $i<0$ or $j<0$. These form a Quillen pair
since the functor $[-n]$ preserves levelwise weak equivalences and cofibrations, while $[n]$ preserves $\Omega$-spectra. For each $\Om$-spectrum $Z$, there is a natural isomorphism $Z\lrar \Om(Z[1])$ in $\Ho(\Sp(\M))$, which shows that $\Om\circ \RR[1]$ is equivalent to the identity. On the other hand, $[1]$ is a right Quillen functor so that $\Om\circ \RR[1]\simeq \RR[1]\circ \Om$, which shows that $\Om: \Ho(\Sp(\M))\lrar \Ho(\Sp(\M))$ is an equivalence.
\end{proof}

When $\M$ is left proper, the fibrant objects of $\Sp(\M)$ are precisely the Reedy fibrant $\Om$-spectra in $\M$ (see \cite[Proposition 3.4.1]{Hir}). 
On the other hand, $\Om$-spectra can always be characterized as the local object against a particular class of maps:
\begin{lem}\label{l:localizingmaps}
Let $\M$ be a weakly pointed model category and let $\G$ be a class of cofibrant objects in $\M$ with the following property: a map $f: X \lrar Y$ in $\M$ is a weak equivalence if and only if the induced map 
$$
\Map^h_\M(D,X) \lrar \Map^h_{\M}(D,Y)
$$ 
is a weak equivalence of spaces for every $D \in \G$. Then an object $Z\in \M^{\NN\times\NN}$ is:
\begin{enumerate}[(1)]
\item  a pre-spectrum if and only if $Z$ is local with respect to the set of maps
$$ (\star)\quad \emptyset \lrar h_{n,m} \otimes D $$
for every $D \in \G$ and $n \neq m$, where $h_{n, m}=\hom((n,m),-):\NN\times \NN\lrar \Set$ and $\otimes$ denotes the natural tensoring of $\M$ over sets.
\item 
an $\Omega$-spectrum if and only if it is a pre-spectrum which is furthermore local with respect to the set of maps
$$ (\star\star)\quad \left[h_{n+1, n}\coprod_{h_{n+1, n+1}} h_{n, n+1}\right] \otimes D \lrar h_{n,n} \otimes D $$
for every $D \in \G$ and every $n \geq 0$.
\end{enumerate}
\end{lem}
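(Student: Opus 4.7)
The strategy is to exploit a Yoneda-style identification of mapping spaces out of representable diagrams. For cofibrant $D \in \M$ the object $h_{n,m} \otimes D$ is Reedy cofibrant --- its only nontrivial latching map is $\emptyset \hookrightarrow D$ at position $(n,m)$ --- and the adjunction between $h_{n,m} \otimes (-)$ and evaluation at $(n,m)$ yields a natural weak equivalence
$$\Map^h_{\M^{\NN\times\NN}}(h_{n,m} \otimes D, Z) \simeq \Map^h_\M(D, Z_{n,m}).$$
All locality conditions in the lemma will be translated through this equivalence.

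For part (1), locality of $Z$ with respect to $\emptyset \to h_{n,m} \otimes D$ then amounts to $\Map^h_\M(D, Z_{n,m}) \simeq *$ for every $D \in \G$. Fixing a weak zero object $0\in\M$, the canonical map $0 \to Z_{n,m}$ induces a weak equivalence on $\Map^h_\M(D,-)$ for every $D \in \G$, so the $\G$-hypothesis forces $Z_{n,m}$ itself to be a weak zero object. Ranging over $n \neq m$ gives the pre-spectrum condition, and the reverse implication is immediate.

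For part (2), the key additional observation is that $h_{n+1,n+1} \hookrightarrow h_{n+1,n}$ is a Reedy cofibration of set-valued diagrams; after tensoring with cofibrant $D$, the pushout appearing in $(\star\star)$ is therefore a homotopy pushout in $\M^{\NN\times\NN}$. Mapping into $Z$ turns it into a homotopy pullback, which combined with the Yoneda identification yields
$$\Map^h\bigl((h_{n+1,n}\coprod_{h_{n+1,n+1}} h_{n,n+1})\otimes D,\; Z\bigr) \;\simeq\; \Map^h_\M\bigl(D,\; Z_{n+1,n}\times^h_{Z_{n+1,n+1}} Z_{n,n+1}\bigr).$$
Locality of $Z$ with respect to $(\star\star)$ thus says that $\Map^h_\M(D, Z_{n,n}) \to \Map^h_\M(D, Z_{n+1,n}\times^h_{Z_{n+1,n+1}} Z_{n,n+1})$ is a weak equivalence for all $D \in \G$; by the $\G$-hypothesis this is equivalent to the underlying map $Z_{n,n} \to Z_{n+1,n}\times^h_{Z_{n+1,n+1}} Z_{n,n+1}$ being a weak equivalence in $\M$, i.e.\ to the diagonal square at $(n,n)$ being homotopy Cartesian.

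The only mildly delicate point is ensuring that the pushout in $(\star\star)$ really is a homotopy pushout after tensoring with $D$, so that mapping into $Z$ produces a homotopy pullback of mapping spaces. This reduces to checking that the representables $h_{n,m}$ are Reedy cofibrant $\Set$-valued diagrams and that the inclusions between them are Reedy cofibrations, which is routine since all morphisms in $\NN \times \NN$ are degree-increasing.
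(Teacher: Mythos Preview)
Your approach is essentially the same as the paper's, and the overall argument is correct except for one technical point in part~(2). The claim that $h_{n+1,n+1} \hookrightarrow h_{n+1,n}$ is a Reedy cofibration is false: at position $(n+1,n+1)$ the latching object of $h_{n+1,n}$ is $\ast$ (coming from the single contributing index $(n+1,n)$) while that of $h_{n+1,n+1}$ is $\emptyset$, so the relative latching map is the fold $\ast\coprod\ast \to \ast$ (or $D\coprod D \to D$ after tensoring), which is not a cofibration. The remark that ``all morphisms in $\NN\times\NN$ are degree-increasing'' guarantees that each $h_{n,m}$ is Reedy cofibrant, but it does \emph{not} make the maps between representables Reedy cofibrations.

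The paper's argument avoids this by noting instead that the maps $h_{n,m}\otimes D \to h_{n',m'}\otimes D$ are \emph{levelwise} cofibrations between Reedy cofibrant (hence levelwise cofibrant) objects, and that homotopy pushouts in $\M^{\NN\times\NN}$ are computed levelwise. This suffices to conclude that the strict pushout in~$(\star\star)$ is a homotopy pushout. With this correction your proof goes through and matches the paper's.
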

\begin{proof}
Let $Z$ be a Reedy fibrant object of $\M^{\NN\times\NN}$. For any object $A\in \M$, the diagram $h_{n, m}\otimes A$ is the image of $A$ under the left adjoint to the functor $\M^{\NN \times \NN} \lrar \M; Z\mapsto Z_{n, m}$. Unwinding the definitions, the image of ($\star$) under $\Map^h(-, Z)$ can therefore be identified with the map
$$
\Map_\M^h(D, Z_{n, m})\lrar \Map_\M^h(\emptyset, Z_{n, m})\simeq \ast.
$$
It follows that $Z$ is local with respect to the maps ($\star$) iff $Z_{n, m}$ is a weak zero object, which proves (1).

For (2), observe that for any cofibrant object $D$ and any pair of $n'\leq n$, $m'\leq m$, the maps $h_{n, m}\otimes D\lrar h_{n', m'}\otimes D$ are levelwise cofibrations between Reedy cofibrant objects. Since homotopy pushouts in $\M^{\NN\times\NN}$ are computed levelwise, it follows that the domain of ($\star\star$) is a homotopy pushout of $\NN\times\NN$-diagrams. Using this, the image of ($\star\star$) under $\Map^h(-, Z)$ can therefore be identified with the map
$$
\Map^h_\M(D, Z_{n, n})\lrar \Map^h_{\M}(D, Z_{n+1, n})\times^h_{\Map^h_\M(D, Z_{n+1, n+1})} \Map^h_\M(D, Z_{n, n+1}).
$$
The target of this map can be identified with $\Map^h_\M\big(D, Z_{n,n+1} \times^h_{Z_{n+1,n+1}} Z_{n+1,n}\big)$. It follows that a pre-spectrum is local with respect to ($\star\star$) iff it is an $\Omega$-spectrum.
\end{proof}
\begin{cor}\label{c:existence of Sp}
Let $\M$ be a left proper combinatorial model category which is weakly pointed. Then the stabilization $\Sp(\M)$ exists. 
\end{cor}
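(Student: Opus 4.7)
The plan is to obtain $\Sp(\M)$ as a left Bousfield localization of the Reedy model structure on $\M^{\NN\times\NN}$ at a set of maps cut out by Lemma~\ref{l:localizingmaps}. As a preliminary, one checks that the Reedy model structure on $\M^{\NN\times\NN}$ inherits both combinatoriality and left properness from $\M$: the former because $\NN\times\NN$ is a small Reedy category, the latter because left properness passes to diagram categories.

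The main task is to produce a small set $S$ of maps in $\M^{\NN\times\NN}$ whose local objects, among Reedy fibrant diagrams, are precisely the $\Om$-spectra. By Lemma~\ref{l:localizingmaps} it suffices to exhibit a small set $\G$ of cofibrant objects of $\M$ such that derived mapping spaces out of $\G$ detect weak equivalences in $\M$; then one can take $S$ to consist of the maps $(\star)$ and $(\star\star)$ from that lemma as $D$ ranges over $\G$ and $n,m$ over $\NN$. Such a set $\G$ exists for any combinatorial model category: one may take cofibrant replacements of the domains and codomains of a generating set of cofibrations, or invoke Dugger's presentation theorem. Moreover the maps in $S$ are Reedy cofibrations, since tensoring a cofibrant object of $\M$ with a representable $h_{n,m}$ produces a Reedy cofibrant diagram, and by the argument in the proof of Lemma~\ref{l:localizingmaps} the domain of $(\star\star)$ is a pushout along a levelwise cofibration of such.

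Applying the existence theorem for left Bousfield localizations of combinatorial left proper model categories at $S$ then yields a model structure on $\M^{\NN\times\NN}$ whose cofibrations are the Reedy cofibrations; whose fibrant objects, by Lemma~\ref{l:localizingmaps} together with~\cite[Proposition 3.4.1]{Hir}, are exactly the Reedy fibrant $\Om$-spectra; and whose weak equivalences are the maps inducing equivalences on $\Map^h(-,Z)$ for every Reedy fibrant $\Om$-spectrum $Z$. These weak equivalences coincide with the stable equivalences of Definition~\ref{d:stable-equiv}, since every $\Om$-spectrum admits a Reedy fibrant replacement that is again an $\Om$-spectrum. The only genuinely non-formal input is the existence of the homotopy-generating set $\G$, which is a standard consequence of combinatoriality; the remainder is a direct application of the Bousfield localization machinery and Lemma~\ref{l:localizingmaps}.
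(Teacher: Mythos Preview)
Your proposal is correct and follows essentially the same approach as the paper: both obtain $\Sp(\M)$ as the left Bousfield localization of the Reedy model structure at the set of maps from Lemma~\ref{l:localizingmaps}, using combinatoriality to produce a small detecting set $\G$ (the paper cites~\cite[Proposition 4.7]{Dug}) and left properness to invoke the existence theorem for Bousfield localizations. You have simply spelled out in more detail what the paper compresses into two sentences, including the verification that the localized weak equivalences coincide with the stable equivalences of Definition~\ref{d:stable-equiv}.
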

\begin{proof}
Because $\M$ is combinatorial there exists a \emph{set} $\G$ of cofibrant objects of $\M$ which together detect weak equivalences as above (see e.g.~\cite[Proposition 4.7]{Dug}). The stable model structure can therefore be identified with the left Bousfield localization of the Reedy model structure at a set of maps, which exists because $\M$ is left proper (see~\cite[Theorem 4.1.1]{Hir}).
\end{proof}

\begin{pro}\label{p:induced-adj-2}
If $\L:\M \adj \N:\R$ is a Quillen adjunction between left proper combinatorial model categories then its levelwise prolongation
$$\xymatrix@C=4pc{
\Sp(\M)\ar@<1ex>[r]^{\Sp(\L)} & \Sp(\N)\ar@<1ex>[l]^{\Sp(\R)}_{\upvdash}\\
}$$
is a Quillen adjunction with respect to the stable model structures on both sides. Furthermore, if $\L \dashv\R$ is a Quillen equivalence then so is $\Sp(\L) \dashv\Sp(\R)$.
\end{pro}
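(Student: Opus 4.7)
The plan is to prove both statements in two stages, first at the level of the Reedy ($=$ projective) model structure on $(\NN\times\NN)$-diagrams, and then descending to the stable model structure via the left Bousfield localization of Corollary \ref{c:existence of Sp}. At the Reedy level, the levelwise functors $\Sp(\L) \dashv \Sp(\R)$ form a Quillen adjunction since the generating (trivial) cofibrations of the Reedy structure are of the form $h_{n,m}\otimes i$ for $i$ a generating (trivial) cofibration of $\M$, which $\Sp(\L)$ preserves. Moreover, when $\L\dashv\R$ is a Quillen equivalence, its Reedy extension is too, as Reedy (co)fibrant diagrams are levelwise (co)fibrant so that the derived unit and counit are computed levelwise.

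To promote the Reedy Quillen adjunction to a Quillen adjunction on the stable model structures, I invoke the standard criterion that it suffices for $\Sp(\R)$ to send stably fibrant objects (i.e.~Reedy fibrant $\Omega$-spectra) to stably fibrant objects. Reedy fibrancy is preserved since $\Sp(\R)$ is right Quillen on the Reedy structure, so the content reduces to preservation of the pre-spectrum condition and of the homotopy Cartesianness of the diagonal squares \eqref{e:diag}. For the pre-spectrum condition, a fibrant weak zero object in $\N$ is weakly equivalent to the terminal object, which $\R$ preserves (as a right adjoint), and $\R$ also preserves weak equivalences between fibrant objects (Ken Brown's lemma). For the diagonal squares, any homotopy Cartesian square of fibrant objects is, up to weak equivalences of fibrant diagrams, a strict pullback square one of whose legs is a fibration of fibrant objects, and $\R$ preserves such strict pullbacks (as a right adjoint), fibrations between fibrant objects (as right Quillen), and weak equivalences between fibrant objects.

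For the Quillen equivalence, I would apply the standard descent result for Bousfield localizations: a Quillen equivalence $\L\dashv\R$ descends to a Quillen equivalence between $L_S\M$ and $L_{\Sp(\L)(S)}\N$ for any set $S$ of maps in $\M$ between cofibrant objects (cf.~Hirschhorn 3.3.20). Taking $S = S_\M$ to be the generating set for $\Sp(\M)$ from the proof of Corollary \ref{c:existence of Sp}, this yields a Quillen equivalence $\Sp(\M) = L_{S_\M}(\M^{\NN\times\NN}) \to L_{\Sp(\L)(S_\M)}(\N^{\NN\times\NN})$, and it remains to identify the target with $\Sp(\N)$. By adjunction, $Y$ is $\Sp(\L)(S_\M)$-local iff $\Sp(\R)Y$ is $S_\M$-local, iff $\Sp(\R)Y$ is an $\Omega$-spectrum in $\M$ (Lemma \ref{l:localizingmaps}). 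Since $\L\dashv\R$ is a Quillen equivalence, $\R$ reflects weak equivalences between fibrant objects, and this promotes to reflection of weak zero objects and of homotopy Cartesian squares between fibrants; hence $\Sp(\R)Y$ being an $\Omega$-spectrum is equivalent to $Y$ itself being an $\Omega$-spectrum in $\N$, giving the identification.

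The main technical hurdle is the preservation (and reflection) of the $\Omega$-spectrum condition under $\Sp(\R)$: preservation hinges on the right Quillen functor $\R$ preserving weak zero objects and homotopy Cartesian squares between fibrant objects, while reflection --- needed only for the Quillen equivalence --- follows from the analogous reflection of weak equivalences between fibrant objects that holds for any Quillen equivalence. Once these are in place, the rest of the argument is formal manipulation with Bousfield localizations.
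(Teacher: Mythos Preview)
Your argument for the Quillen adjunction is essentially the paper's: both reduce to showing that the derived right adjoint $\RR\R^{\NN\times\NN}$ preserves $\Omega$-spectra, and then invoke the standard descent criterion for Bousfield localizations (the paper cites \cite[Theorem 3.1.6, Proposition 3.3.18]{Hir}). You simply spell out in more detail why a right Quillen functor preserves weak zero objects and homotopy Cartesian squares of fibrant objects.

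For the Quillen equivalence you take a genuinely different route. The paper argues directly: $\Sp(\R)$ detects weak equivalences between fibrant objects of $\Sp(\N)$ (since stable equivalences between $\Omega$-spectra are levelwise, and $\R$ detects levelwise equivalences between fibrant diagrams), and the derived unit of the stable adjunction can be identified with that of the Reedy adjunction, which is a levelwise equivalence. You instead invoke \cite[Theorem 3.3.20]{Hir} to obtain a Quillen equivalence $\Sp(\M)\simeq L_{\Sp(\L)(S_\M)}(\N^{\NN\times\NN})$, and then identify the right-hand side with $\Sp(\N)$ by showing that $\Sp(\R)$ both preserves and \emph{reflects} the $\Omega$-spectrum condition on Reedy fibrant objects. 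Your approach requires the extra reflection step (which you correctly deduce from $\R$ reflecting weak equivalences between fibrants), but in exchange it makes transparent exactly which property of the Quillen equivalence is being used; the paper's derived-unit identification, by contrast, is terse and leaves implicit why the Reedy and stable derived units agree. Both arguments are correct and of comparable length once fully unpacked.
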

\begin{proof}
Since $\RR\R^{\NN\times\NN}: \N^{\NN \times \NN} \lrar \M^{\NN \times \NN}$ preserves $\Omega$-spectra it follows from~\cite[Theorem 3.1.6, Proposition 3.3.18]{Hir} that the Quillen adjunction $\R^{\NN \times \NN} \dashv \L^{\NN \times \NN}$ descends to the stable model structure. A Quillen equivalence $\L\dashv \R$ induces a Quillen equivalence between Reedy model structures. This implies that the induced Quillen pair between stabilizations is a Quillen equivalence as well. Indeed, the right Quillen functor $\Sp(\R)$ detects equivalences between Reedy fibrant $\Om$-spectra (which are just levelwise equivalences) and the derived unit map of $\Sp(\R)$ can be identified with the derived unit map of $\R^{\NN\times \NN}$.
\end{proof}

\begin{rem}\label{r:replacement}
When $\M$ is combinatorial and weakly pointed, any Reedy cofibrant object $X\in \M^{\NN\times\NN}$ admits a stable equivalence $X\lrar E$ to an $\Omega$-spectrum. This either follows formally from inspecting the proof of the existence of Bousfield localizations in the left proper case, or -- if $\M$ is differentiable -- from the explicit constructions in Remark \ref{r:prespectrumrep} and Corollary \ref{c:fibrantreplacement}. 
\end{rem}

\begin{rem}\label{r:stablewe}
When the stable model structure does not exist, the class of Reedy cofibrations which are also stable equivalences is not closed under pushouts. However, this class is closed under pushouts along maps with a levelwise cofibrant domains and codomains (indeed, such pushouts are always homotopy pushouts in the injective model structure on $\M^{\NN \times \NN}$ and hence in the Reedy model structure as well).
\end{rem}

\subsection{Parameterized spectrum objects}\label{s:tanbun}
In the previous section we have seen that any -- sufficiently nice -- weakly pointed model category $\M$ gives rise to a model category $\Sp(\M)$ of spectra in $\M$, depending naturally on $\M$. One can mimic the description of $\Sp(\M)$ in terms of $\NN\times\NN$-diagrams to produce a model category $\T\M$ of \textbf{parameterized spectra} in a model category $\M$, with varying `base spaces'. Indeed, for a fixed base $A \in M$, consider the pointed model category $\M_{A//A}$ of retractive objects over $A$, i.e.\ maps $B\lrar A$ equipped with a section. An $\Om$-spectrum in $\M_{A//A}$ can be considered as a \textbf{parameterized spectrum} over $A$. This notion was first studied by May and Sigurdsson in~\cite{MS06} when $\M$ is the category of topological spaces. In that case a paramterized spectrum over $A$ describes a functor from the fundametal $\infty$-groupoid of $A$ to spectra (see \cite[Appendix B]{MBG11}). When $\M$ is the category of $\EE_{\infty}$-ring spectra, Basterra and Mandell (\cite{BM05}) showed that parameterized $\Om$-spectra over $R \in \M$ is essentially equivalent to the notion of an $E$-module spectrum.


\begin{define}
Let $\M$ be a model category. We will denote by
$$ \T_A\M := \Sp(\M_{A//A}) $$
the stabilization of $\M_{A//A}$, when it exists, and refer to it as the \textbf{tangent model category} of $\M$ at $A$.
\end{define}

\begin{rem}
When $\M$ is combinatorial and left proper then $\M_{A//A}$ is combinatorial and left proper for every $A$ and so all the tangent $\T_\A\M$ exists for every $A$. In \S\ref{s:oo-stab} we will show that under mild conditions the model category $\T_A\M$ is also presentation of the tangent $\infty$-category $\T_A\M_{\infty}$.
\end{rem}


Note that a spectrum in $\M_{A//A}$ is given by the datum of a diagram $X: \NN\times \NN\lrar \M_{A//A}$, which is equivalent to the datum of a diagram
$$
X': (\NN\times \NN)_\ast \lrar \M
$$
such that $X'(\ast)=A$, where $(\NN\times\NN)_\ast$ denotes the category obtained from $\NN\times\NN$ by \textbf{freely adding a zero object} $\ast$. In other words, for a category $\I$, the category $\I_\ast$ has object set $\Ob(\I) \cup \{\ast\}$, and maps $\Hom_{\I_\ast}(i,j) = \Hom_{\I}(i,j) \cup \{\ast\}$ for every $i,j \in \I$, and $\Hom_{\I_\ast}(i,\ast) = \Hom_{\I_\ast}(\ast,i) = \{\ast\}$ for every $i \in \I$ (here the composition of $\ast$ with any other map is again $\ast$). Parameterized spectra with varying base can therefore be described in terms of $(\NN \times \NN)_\ast$ diagrams whose value at $\ast$ is not fixed in advance.

\begin{rem}
When $\I=(\I, \I^+, \I^-)$ is a Reedy category $\I_\ast$ is again a Reedy category, where we consider $\ast \in \I_\ast$ as being the unique object of degree $0$ and such that for every $i$ the unique map $\ast \lrar i$ is in $\I_\ast^+=(\I_\ast)^+$ and the unique map $i \lrar \ast$ is in $\I_\ast^-$. 
\end{rem}

\begin{define}\label{d:the-other-way}
Let $\M$ be a model category and let $X: (\NN\times \NN)_\ast\lrar \M$ be a diagram. We will say that $X$ is a \textbf{parameterized $\Omega$-spectrum} in $\M$ if it is satisfies the following two conditions:
\begin{enumerate}
\item for each $n\neq m$, the map $X(n, m)\lrar X(\ast)$ is a weak equivalence.
\item for each $n \geq 0$ the square 
\begin{equation}\label{e:square_nn}\vcenter{
\xymatrix{
X_{n,n} \ar[r]\ar[d] & X_{n+1,n} \ar[d] \\
X_{n+1,n} \ar[r] & X_{n+1,n+1} \\
}}
\end{equation}
is homotopy Cartesian.
\end{enumerate}
We will say that a map $f: X \lrar Y$ in $\M^{(\NN \times \NN)_\ast}$ is a \textbf{stable equivalence} if for every parameterized $\Om$-spectrum $Z$ the induced map on derived mapping spaces
$$ \Map^h(Y,Z) \lrar \Map^h(X,Z) $$
is an equivalence.
\end{define}

\begin{rem}\label{r:omega}
A diagram $X: (\NN\times \NN)_\ast\lrar \M$ is a Reedy fibrant parameterized $\Om$-spectrum iff $X(\ast)$ is fibrant in $\M$ and $X$ determines a Reedy fibrant $\Om$-spectrum in $\M_{X(\ast)//X(\ast)}$.
\end{rem}

\begin{define}
The \textbf{tangent bundle} $\T\M$ of $\M$ is -- if it exists -- the unique model structure on $\M^{(\NN \times \NN)_\ast}$ whose cofibrations are the Reedy cofibrations and whose weak equivalences are the stable equivalences.
\end{define}

When the tangent bundle $\T\M$ exists it has the same cofibrations and less fibrant objects than the Reedy model structure. It follows that $\T\M$ is a left Bousfield localization of the Reedy model structure. In fact, Lemma \ref{l:localizingmaps} shows that $\T\M$ can be obtained from the Reedy model structure by left Bousfield localizing at the class of maps
$$ h_{\ast} \otimes D \lrar h_{n,m} \otimes D \quad \quad n \neq m, D \in \G $$
together with the maps
$$  \left[h_{n+1, n} \coprod_{h_{n+1, n+1}} h_{n, n+1}\right] \otimes D \lrar h_{n, n} \otimes D \quad\quad n \geq 0, D \in \G. $$
Here $h_x: (\NN\times \NN)_\ast\lrar \Set$ is the functor corepresented by $x\in (\NN\times\NN)_\ast$, $\otimes$ denotes the natural tensoring of $\M$ over sets and $\G$ is a class of cofibrant objects $D$ such that the functors $\Map^h_{\M}(D, -)$ mutually detect equivalences. 

Corollary \ref{c:existence of Sp} has the following analogues:
\begin{cor}
If $\M$ is a left proper combinatorial model category, then the tangent bundle $\T\M$ exists.
\end{cor}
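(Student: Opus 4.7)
The plan is to mimic almost verbatim the proof of Corollary \ref{c:existence of Sp}, since the explicit presentation of parameterized $\Omega$-spectra as local objects has already been worked out in the paragraph immediately preceding the statement. The only real content is to assemble three standard facts about combinatorial model categories, check that they apply in the present context, and invoke Hirschhorn's existence theorem for left Bousfield localizations.

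First I would observe that the Reedy model structure on $\M^{(\NN\times\NN)_\ast}$ is again left proper and combinatorial: combinatoriality is inherited by diagram categories with the projective (and hence Reedy) model structure, and left properness of a Reedy model structure follows from left properness of $\M$ because Reedy cofibrations are in particular levelwise cofibrations and homotopy pushouts are computed levelwise. Second, since $\M$ is combinatorial, a result of Dugger (\cite[Proposition 4.7]{Dug}) supplies a \emph{set} $\G$ of cofibrant objects such that the functors $\Map^h_\M(D,-)$ for $D\in\G$ jointly detect weak equivalences. This is exactly the hypothesis of Lemma \ref{l:localizingmaps}.

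Next I would apply the analogue of Lemma \ref{l:localizingmaps} for $(\NN\times\NN)_\ast$-diagrams, whose statement and proof are essentially the same as in the $\NN\times\NN$ case (the only new feature being the extra maps involving $h_\ast$, which enforce condition (1) of Definition \ref{d:the-other-way} that the off-diagonal entries are weakly equivalent to $X(\ast)$). This shows that the class of parameterized $\Omega$-spectra coincides with the class of objects local with respect to the set of maps
$$ S = \bigl\{h_\ast \otimes D \lrar h_{n,m}\otimes D \,:\, n\neq m,\ D\in\G\bigr\} \cup \bigl\{[h_{n+1,n}\coprod_{h_{n+1,n+1}} h_{n,n+1}]\otimes D \lrar h_{n,n}\otimes D \,:\, n\geq 0,\ D\in\G\bigr\} $$
exactly as displayed in the paragraph before the corollary. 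Since weak equivalences of the putative model structure are precisely the $S$-local equivalences, the desired model structure is the left Bousfield localization of the Reedy model structure on $\M^{(\NN\times\NN)_\ast}$ at $S$.

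Finally, since the Reedy model structure on $\M^{(\NN\times\NN)_\ast}$ is left proper combinatorial and $S$ is a set, Hirschhorn's existence theorem \cite[Theorem 4.1.1]{Hir} produces the left Bousfield localization, which is the tangent bundle $\T\M$. There is no real obstacle here; the only step requiring any care is the verification that the set $S$ really does cut out the parameterized $\Omega$-spectra in the sense of Definition \ref{d:the-other-way}, and this is an entirely formal computation with corepresentables, identical to the one used in Lemma \ref{l:localizingmaps}.
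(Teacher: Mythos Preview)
Your proposal is correct and follows exactly the approach the paper intends: the paper leaves this corollary without proof, merely stating it as an analogue of Corollary~\ref{c:existence of Sp}, and your argument is precisely the expected elaboration of that analogy using the set of localizing maps displayed in the paragraph just before the statement together with Dugger's result and Hirschhorn's existence theorem.
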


\begin{examples}\
\begin{enumerate}
\item
When $\M=\cS$ is the category of simplicial sets with the Kan-Quillen model structure then $\T_X\cS$ gives a model for parameterized spectra over $X$ which is equivalent to that of~\cite{MS06} (see~\cite[\S 2.3]{HNP17b}). Similarly, $\T\cS$ is the associated global model, whose objects can be thought of as pairs consisting of a space $X$ together with a parameterized spectrum over $X$.
\item
When $\M=\sGr$ is the category of simplicial groups the tangent model category $\T_{G}\sGr$ is Quillen equivalent to the model category of (naive) $G$-spectra (see \cite[\S 2.4]{HNP17b}).
\item
If $\P$ is a cofibrant dg-operad over a field $k$ of characteristic $0$ and $\M=\dg\Alg_\P$ is the model category of dg-algebras over $\P$ then $\M$ is left proper and for every $\P$-dg-algebra object $A$ the tangent category $\T_\A\M$ is Quillen equivalent to the category of dg-$A$-modules (see \cite{Sch97}, \cite{HNP17a}).
\item
If $\bS$ is an excellent symmetric monoidal model category in the sense of~\cite[\S A.3]{Lur09} and $\M =\Cat_{\bS}$ is the model category of small $\bS$-enriched categories then for every fibrant $\bS$-enriched category $\C$ the tangent category $\T_{\C}\Cat_{\bS}$ is Quillen equivalent to the category of enriched lifts $\C^{\op} \otimes \C \lrar \T\bS$ of the mapping space functor $\Map: \C^{\op} \otimes \C \lrar \bS$ (\cite[Corollary 3.1.16]{HNP17b}). 
\item
If $\M = \Set_{\Del}^{\Joy}$ is the model category of simplicial sets endowed with the \textbf{Joyal} model structure and $\C \in \Set_{\Del}^{\Joy}$ is a fibrant object (i.e., an $\infty$-category) then $\T_{\C}\Set_{\Del}^{\Joy}$ is equivalent to the model category $(\Set_{\Del})_{/\Tw(\C)}$ of simplicial sets over the \textbf{twisted arrow category} of $\C$ equipped with the covariant model structure. In particular, the underlying $\infty$-category $\T_{\C}\Cat_{\infty} \simeq \left(\T_{\C}\Set_{\Del}^{\Joy}\right)_{\infty}$ is equivalent to the $\infty$-category of functors $\Tw(\C) \lrar \Spectra$ (see \cite[Corollary 3.3.1]{HNP17b}).
\end{enumerate}
\end{examples}

\subsection{Suspension spectra}

In section \S\ref{s:stabilization} we considered a model for spectrum objects in a weakly pointed model category $\M$, and saw that in good cases it yields a model category $\Sp(\M)$. We will now show that when this holds, one can also model the classical ``suspension-infinity/loop-infinity'' adjunction via a Quillen adjunction.

\begin{pro}\label{p:induced-adj}
Let $\M$ be a weakly pointed model category such that $\Sp(\M)$ exists. Then the adjunction
$$ \Sig^\infty : \M \adj \Sp(\M) : \Om^\infty $$
given by $\Sig^{\infty}(X)_{n,m} = X$ and $\Om^{\infty}(X_{\bullet\bullet}) = X_{0,0}$ is a Quillen adjunction. Furthermore, this Quillen adjunction is natural in $\M$ in the following sense: for any Quillen pair 
$\L:\M \adj \N:\R$ between two such model categories the diagram of Quillen adjunctions 
$$\xymatrix@C=4pc{
\Sp(\M)\ar@<1ex>[r]^{\Sp(\L)}\ar@<1ex>[d]^{\Omega^\infty}_-\dashv & \Sp(\N)\ar@<1ex>[l]^{\Sp(\R)}_{\upvdash}\ar@<1ex>[d]^{\Omega^\infty}_-\dashv\\
\M\ar@<1ex>[u]^{\Sigma^\infty}\ar@<1ex>[r]^{\L} & \N\ar@<1ex>[l]^{\R}_{\upvdash}\ar@<1ex>[u]^{\Sigma^\infty}
}$$ 
commutes.
\end{pro}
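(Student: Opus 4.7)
The plan is to verify the Quillen pair criterion on the right adjoint side, which is cleanest, and then to check naturality directly on the level of underlying functors.

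First I would observe that $\Sigma^\infty$ is the left Kan extension along the inclusion $\{(0,0)\} \hookrightarrow \NN \times \NN$. Since $(0,0)$ is initial in $\NN \times \NN$, each comma category $(0,0)\downarrow (n,m)$ is a single point, so this Kan extension is indeed the constant-diagram functor sending $X$ to $\Sigma^\infty(X)_{n,m} = X$. Its right adjoint in this interpretation is the restriction $\Omega^\infty = \ev_{(0,0)}$, giving the adjunction at the level of underlying categories.

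To see that it is Quillen, I would verify that $\Omega^\infty$ preserves fibrations and trivial fibrations. By Definition \ref{d: stabilization model category} the cofibrations of $\Sp(\M)$ coincide with the Reedy cofibrations, so the trivial fibrations of $\Sp(\M)$ coincide with the Reedy trivial fibrations. Moreover, every Reedy weak equivalence is a stable equivalence (levelwise equivalences induce equivalences on $\Map^h(-,Z)$ for any target), so the stable model structure has more trivial cofibrations than Reedy and therefore every stable fibration is a Reedy fibration. Now, in the Reedy category $\NN \times \NN$ only identities lower degree, so every matching object is terminal and Reedy fibrations are precisely levelwise fibrations. Consequently $\Omega^\infty = \ev_{(0,0)}$ carries every (trivial) fibration of $\Sp(\M)$ to a (trivial) fibration of $\M$, and $(\Sigma^\infty, \Omega^\infty)$ is Quillen.

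For naturality, the square of left adjoints commutes strictly because $\Sp(\L)$ is the levelwise prolongation of $\L$: we compute $\Sp(\L)(\Sigma^\infty X)_{n,m} = \L(X) = \Sigma^\infty(\L X)_{n,m}$. Passing to right adjoints (or equivalently observing that $\ev_{(0,0)} \circ \Sp(\R) = \R \circ \ev_{(0,0)}$ directly), the square of right adjoints commutes as well. The Quillen property of each of the four edges then comes from the hypothesis that $\L \dashv \R$ is Quillen, from Proposition \ref{p:induced-adj-2} (which gives that $\Sp(\L) \dashv \Sp(\R)$ is Quillen), and from the first part of the present proposition applied separately to $\M$ and $\N$.

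I do not anticipate a serious obstacle. The one point worth flagging is the observation about the degenerate Reedy structure on $\NN \times \NN$, in which no non-identity map lowers degree; this collapses the matching object condition and makes Reedy fibrations simply levelwise fibrations, which is what makes $\Omega^\infty$ right Quillen with essentially no work.
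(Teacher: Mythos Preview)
Your proof is correct and follows essentially the same approach as the paper: the paper simply notes that $\ev_{(0,0)}$ is right Quillen for the Reedy model structure and hence for the stable one, and checks commutation on right adjoints. Your version unpacks these observations (that Reedy fibrations are levelwise because all maps in $\NN\times\NN$ raise degree, and that the stable model structure has the same cofibrations and fewer fibrations than Reedy), but the underlying argument is the same.
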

\begin{proof}
The functor $\ev_{(0, 0)}\colon \M^{\NN\times\NN}\lrar \M$ evaluating at $(0, 0)$ is already right Quillen functor for the Reedy model structure, so in particular for the stable model structure on $\M^{\NN\times \NN}$. The commutation of the diagram is immediate to check on right adjoints. 

\end{proof}

The adjunction $\Sig^{\infty}: \M \adj \Sp(\M):\Om^{\infty}$ of Proposition~\ref{c:existence of Sp} is offered as a model for the classical suspension-infinity/loop-infinity adjunction. This might seem surprising at first sight as the object $\Sig^{\infty}(X)$ is by definition a \textbf{constant $(\NN \times \NN)$-diagram}, and not a suspension spectrum. In this section we will prove a convenient replacement lemma showing that up to a stable equivalence every constant spectrum object can be replaced with a suspension spectrum, which is unique in a suitable sense (see Remark~\ref{r:unique}). This can be used, for example, in order to functorially replace $\Sig^{\infty}(X)$ with a suspension spectrum, whenever the need arises (see Corollary~\ref{c:suspensionspectrum} below). While mostly serving for intuition purposes in this paper, Lemma~\ref{l:suspensionspectrum} is also designed for a more direct application in~\cite{HNP17b}. 

\begin{lem}\label{l:suspensionspectrum}
Let $\M$ be a combinatorial model category. Let $f:X \lrar Y$ be a map in $\M^{\NN \times \NN}$ such that $X$ is constant and levelwise cofibrant and $Y$ is a suspension spectrum. Then there exists a factorization $X \x{f'}{\lrar} X' \x{f''}{\lrar} Y$ of $f$ such that $X'$ is a suspension spectrum, $f'$ is a stable equivalence and $f'_{0,0}: X_{0,0} \lrar X'_{0,0}$ is a weak equivalence. In particular, if $f_{0,0}: X_{0,0} \lrar Y_{0,0}$ is already a weak equivalence then $f$ is a stable equivalence.
\end{lem}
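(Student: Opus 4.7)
The plan is to construct $X'$ explicitly by induction on the Reedy filtration of $\NN\times\NN$, together with a map $f'': X' \lrar Y$ lifting $f$. One starts with $X'_{0,0} := X_{0,0}$ and $f''_{0,0} := f_{0,0}$. At each subsequent Reedy degree $k = n+m$, for each $(n,m)$ of that degree, proceed as follows. If $n \neq m$, the latching object is the pushout $L_{(n,m)}X' = X'_{n-1,m}\coprod_{X'_{n-1,m-1}} X'_{n,m-1}$, and one factors the induced map $L_{(n,m)}X' \lrar Y_{n,m}$ as a cofibration followed by a trivial fibration in $\M$, defining $X'_{n,m}$ as the intermediate object. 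If $n = m \geq 1$, one simply takes $X'_{n,n}$ to be this latching pushout itself, with the induced map to $Y_{n,n}$ arising from the universal property (the two parallel composites into $Y_{n,n}$ agree thanks to the structure maps of $Y$). The map $f': X \lrar X'$ is then determined by $f'_{0,0} := \id_{X_{0,0}}$ together with the structure maps of $X'$, and by construction $f''\circ f' = f$.

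Seeing that $X'$ is a suspension spectrum is routine: each off-diagonal $X'_{n,m}$ is weakly equivalent to the corresponding weak zero $Y_{n,m}$, hence a weak zero; each diagonal square of $X'$ is a strict pushout by construction, and is in fact a homotopy pushout thanks to the observation that Reedy cofibrancy in $\NN\times\NN$ forces all structure maps between cofibrant objects to be cofibrations (a quick induction). Moreover $f'_{0,0}=\id$ is trivially a weak equivalence.

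The main obstacle is to verify that $f'$ is a stable equivalence. My approach is to test against an arbitrary Reedy fibrant $\Om$-spectrum $Z$ and show that $\Map^h(X',Z) \lrar \Map^h(X,Z)$ is an equivalence. Since $(0,0)$ is initial in $\NN\times\NN$ and $X$ is the constant diagram on $X_{0,0}$, one has $\Map^h(X,Z)\simeq \Map_\M^h(X_{0,0},Z_{0,0})$. For $\Map^h(X',Z)$, the off-diagonal positions contribute trivially (both $X'_{n,m}$ and $Z_{n,m}$ are weak zeros), while a Reedy induction along the diagonal using the homotopy coCartesian squares in $X'$ and the homotopy Cartesian squares in $Z$ (which, when off-diagonal entries vanish, expresses the $\Sigma\dashv\Omega$ adjunction) shows that $\Map^h(X',Z)\simeq \Map_\M^h(X'_{0,0},Z_{0,0})$. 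Under these identifications $\Map^h(f',Z)$ becomes the identity on $\Map_\M^h(X_{0,0},Z_{0,0})$, so $f'$ is a stable equivalence.

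The ``in particular'' clause follows easily: if $f_{0,0}$ is already a weak equivalence, then so is $f''_{0,0}$, and by induction on Reedy degree each $f''_{n,m}$ is a weak equivalence---at off-diagonal positions it is a trivial fibration by construction, and at diagonal positions it is the induced map between homotopy pushouts of levelwise weak equivalences (the pushout in $X'$ by construction, the pushout in $Y$ because $Y$ is a suspension spectrum). Hence $f''$ is a Reedy weak equivalence, and $f=f''\circ f'$ is a composite of a stable equivalence with a Reedy (hence stable) equivalence.
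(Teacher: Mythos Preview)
Your construction is correct and yields a valid proof, but it takes a genuinely different route from the paper's. The paper builds $X'$ as a sequential colimit $X=P_0\to P_1\to\cdots$ over $Y$, where each step $P_n\to P_{n+1}$ is a pushout along an explicit levelwise cofibration assembled from the localizing maps of Lemma~\ref{l:localizingmaps} (the maps $(\star)$ and $(\star\star)$); the stable-equivalence property of $f'$ is therefore built into the construction step by step. You instead produce $X'$ in a single pass by Reedy induction and verify afterwards that $f'$ is a stable equivalence by computing $\Map^h(X',Z)$ directly. What each approach buys: yours gives a leaner model for $X'$ with no sequential colimit and a self-contained mapping-space argument; the paper's is more modular, reducing everything to maps already known to be stable equivalences, and its ``suspension spectrum up to $n$'' filtration parallels the $\Omega$-spectrum replacement machinery of \S2.4.

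One comment on your verification that $f'$ is a stable equivalence: the explanation invokes the homotopy Cartesian squares in $Z$ and the $\Sigma\dashv\Omega$ adjunction, but these are not the operative ingredients. What makes the skeletal computation of $\Map^h(X',Z)$ collapse to $\Map^h_\M(X'_{0,0},Z_{0,0})$ is that (i) at each off-diagonal $(n,m)$ the target $Z_{n,m}$ is a fibrant weak zero, so $\Map^h(-,Z_{n,m})$ is contractible regardless of the source, and (ii) at each diagonal $(n,n)$ with $n\geq 1$ the latching map $L_{(n,n)}X'\to X'_{n,n}$ is an \emph{isomorphism} by your construction, so extending along it is unique. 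Neither step uses the $\Omega$-spectrum condition on $Z$ beyond the pre-spectrum condition, nor the homotopy-pushout property of $X'$ beyond the strict equality $X'_{n,n}=L_{(n,n)}X'$. With this clarification the argument is complete, and in fact shows the slightly stronger fact that $f'$ is local with respect to all pre-spectra.
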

\begin{proof}
Let us say that an object $Z_{\bullet\bullet} \in \Sp(\M)$ is a suspension spectrum up to $n$ if $Z_{m,k}$ is weak zero objects whenever $m \neq k$ and $\min(m,k)<n$ and if the $m$'th diagonal square is a pushout square for $m < n$. In particular, the condition of being a suspension spectrum up to $0$ is vacuous. We will now construct a sequence of levelwise cofibrations and stable equivalences 
$$ X = P_0 \lrar P_1 \lrar \cdots \lrar P_{n} \lrar P_{n+1} \lrar \cdots $$
over $Y$ such that each $P_n$ is a levelwise cofibrant suspension spectrum up to $n$ and the map $(P_n)_{m,k} \lrar (P_{n+1})_{m,k}$ is an isomorphism whenever $\min(m,k)< n$ or $m=k=n$. Then $X' \x{\df}{=} \colim_n P_n \simeq \hocolim_n P_n$ is a suspension spectrum by construction and the map $f: X \lrar X'$ satisfies the required conditions (see Remark~\ref{r:stablewe}). 

Given a cofibrant object $Z \in \M$ equipped with a map $Z \lrar Y_{n,n}$, let us denote the cone of the composed map $Z \lrar Y_{n,n} \lrar Y_{n,n+1}$ by $Z \lrar C_{n,n+1}(Z) \lrar Y_{n,n+1}$ and the cone of the map $Z \lrar Y_{n,n} \lrar Y_{n+1,n}$ by $Z \lrar C_{n+1,n}(Z) \lrar Y_{n+1,n}$. 
Since $Y$ is weakly contractible off diagonal it follows that $C_{n,n+1}(Z)$ and $C_{n+1,n}(Z)$ are weak zero objects. Let $\Sigma_Y(Z):= C_{n,n+1}(Z)\coprod_Z C_{n+1,n}(Z)$ be the induced model for the suspension of $Z$ in $\M$. By construction the object $\Sig_Y(Z)$ carries a natural map $\Sig_Y(Z) \lrar Y_{n+1,n+1}$. Let us now define $Q_{n,n+1}(Z)$, $Q_{n+1,n}(Z)$ and $Q_{n+1}(Z)$ by forming the following diagram in $\M^{\NN \times \NN}_{/Y}$:
$$
\xymatrix{ 
& h_{n,n+1}\otimes Z \ar[d]\ar[r] & h_{n,n+1}\otimes C_{n,n+1}(Z)\ar[d] \\ 
h_{n+1,n} \otimes Z \ar[r]\ar[d] & h_{n, n}\otimes Z\ar[r]\ar[d] & \poc Q_{n,n+1}(Z) \ar[d] &\\
h_{n+1,n} \otimes C_{n+1,n}(Z) \ar[r] & \poc Q_{n+1,n}(Z) \ar[r] & \poc Q_{n+1}(Z). \\
}$$
Since all objects in this diagram are levelwisewise cofibrant and the top right horizontal map is a levelwise cofibration and a stable equivalence, all the right horizontal maps are levewise cofibrations and stable equivalences (see Remark~\ref{r:stablewe}). Similarly, since the left bottom vertical map is a levelwise cofibration and a stable equivalence the same holds for all bottom vertical maps. It then follows that $h_{n, n}\otimes Z\lrar Q_{n+1}(Z)$ is a levelwise cofibration and a stable equivalence over $Y$. We note that by construction the shifted diagram $Q_{n+1}(Z)[n+1]$ is constant on $\Sig_Y(Z)$ (see Lemma~\ref{l:stability} for the definition of the shift functors).

Let us now assume that we have constructed $P_n \lrar Y$ such that $P_n$ is a suspension spectrum up to $n$ and such that the shifted object $P_n[n]$ 
is a constant diagram. 
This is clearly satisfied by $P_0\x{\df}{=} X$. We now define $P_{n+1}$ inductively as the pushout
$$\xymatrix{
h_{n, n}\otimes (P_n)_{n,n}\ar[r]\ar[d] & Q_{n+1}((P_n)_{n,n})\ar[d]\\
P_n\ar[r] & P_{n+1}\poc 
}$$ 
Since the left vertical map becomes an isomorphism after applying the shift $[n]$, so does the right vertical map in the above square. It follows that $P_{n+1}[n+1]$ is constant and that the $n$'th diagonal square of $P_{n+1}$ is homotopy coCartesian by construction. This means that $P_{n+1}$ is a suspension spectrum up to $n$. Furthermore, by construction the map $P_n \lrar P_{n+1}$ is a levelwise cofibration and a stable equivalence which is an isomorphism at $(m,k)$ whenever at least one of $m,k$ is smaller than $n$ or $k=m=n$.
\end{proof}

Taking $Y$ in Lemma~\ref{l:suspensionspectrum} to be the terminal object of $\M^{\NN \times \NN}$ we obtain the following corollary:
\begin{cor}\label{c:suspensionspectrum}
Let $X\in \M$ be a cofibrant object. Then there exists a stable equivalence $\Sigma^\infty X\lrar \bar{\Sigma^\infty X}$ whose codomain is a suspension spectrum and such that the map $X \lrar \bar{\Sigma^\infty X}_{0, 0}$ is a weak equivalence. 
\end{cor}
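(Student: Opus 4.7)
The plan is to derive the corollary directly from Lemma~\ref{l:suspensionspectrum}, as the sentence preceding the statement already suggests. Concretely, I will apply the lemma to the unique map $f: \Sigma^\infty X \lrar Y$, where $Y$ is the terminal object of $\M^{\NN \times \NN}$, i.e.\ the constant diagram on the terminal object $\ast \in \M$.

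The only real task is verifying the hypotheses of the lemma. The source $\Sigma^\infty X$ is constant by its very definition, and it is levelwise cofibrant because each of its entries is the cofibrant object $X$. For the target $Y$, I need to check that it is a suspension spectrum. Since $\M$ is weakly pointed, the terminal object $\ast$ is a weak zero object, so every entry of $Y$ is a weak zero object; in particular all off-diagonal entries are, so $Y$ is a pre-spectrum. Each diagonal square of $Y$ is then a square all of whose four vertices are weak zero objects, and any such square is automatically homotopy coCartesian (it is a homotopy pushout of weak zero objects along a map between weak zero objects, and so the canonical map from the homotopy pushout to the bottom-right corner is an equivalence between weak zero objects). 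Hence $Y$ is a suspension spectrum.

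Applying Lemma~\ref{l:suspensionspectrum} then supplies a factorization $\Sigma^\infty X \x{f'}{\lrar} X' \lrar Y$ in which $X'$ is a suspension spectrum, $f'$ is a stable equivalence, and $f'_{0,0}: (\Sigma^\infty X)_{0,0} \lrar X'_{0,0}$ is a weak equivalence. Since $(\Sigma^\infty X)_{0,0} = X$ by definition of $\Sigma^\infty$ (Proposition~\ref{p:induced-adj}), setting $\overline{\Sigma^\infty X} := X'$ gives the required stable equivalence to a suspension spectrum together with a weak equivalence on the $(0,0)$-entry. There is no genuine obstacle in this argument beyond the hypothesis check just carried out; the entire substantive construction has already been done inside the proof of Lemma~\ref{l:suspensionspectrum}.
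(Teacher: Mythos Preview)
Your proposal is correct and follows exactly the paper's approach: the paper derives the corollary in a single sentence by taking $Y$ in Lemma~\ref{l:suspensionspectrum} to be the terminal object of $\M^{\NN \times \NN}$. Your verification that the constant diagram on $\ast$ is a suspension spectrum (using that $\M$ is weakly pointed and that a constant square is automatically homotopy coCartesian) is the only content implicit in the paper's sentence, and you have filled it in correctly.
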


\begin{rem}\label{r:unique}
Given an injective cofibrant constant spectrum object $X$, Corollary~ \ref{c:suspensionspectrum} provides a stable equivalence $X \lrar X'$ from $X$ to a suspension spectrum which induces an equivalence in degree $(0,0)$. These ``suspension spectrum replacements'' can be organized into a category, and Lemma~\ref{l:suspensionspectrum} can be used to show that the nerve of this category is weakly contractible. We may hence consider a suspension spectrum replacement in the above sense as essentially unique.
\end{rem}

\begin{rem}\label{r:examination}
Examining the proof of Lemma~\ref{l:suspensionspectrum} we see that the suspension spectrum replacement of Corollary~\ref{c:suspensionspectrum} can be chosen to depend functorially on $X$ and the map $X \lrar \bar{\Sigma^\infty X}_{0, 0}$ can be chosen to be an isomorphism.
\end{rem}

\begin{rem}\label{r:prespectrumrep}
A similar but simpler construction replaces any levelwise cofibrant $(\NN\times\NN)$-diagram $X$ by a weakly equivalent pre-spectrum: let $X^{(0)}=X$ and inductively define $X^{(k+1)}$ such that $X^{(k)} \lrar X^{(k+1)}$ is a pushout along 
$$\coprod_{n+m=k, n\neq m} h_{n, m}\otimes X^{(k)}_{n, m} \lrar  \coprod_{n+m=k, n\neq m} h_{n, m}\otimes C(X^{(k)}_{n, m}).$$
The map $X^{(k)}\lrar X^{(k+1)}$ is then an isomorphism below the line $m+n=k$ and replaces the off-diagonal entries on that line by their cones. It is a levelwise cofibration and a stable equivalence, being the pushout of such a map with cofibrant target (see Remark~\ref{r:stablewe}). The (homotopy) colimit of the resulting sequence of stable equivalences yields the desired pre-spectrum replacement.
\end{rem}

\subsection{Differentiable model categories and $\Om$-spectra}
Our goal in this subsection is to give a description of the fibrant replacement of a pre-spectrum, which resembles the classical fibrant replacement of spectra (see~\cite{Hov}, or \cite[Corollary 8.17]{Lur06} for the $\infty$-categorical analogue). This description requires some additional assumptions on the model category at hand, which we first spell out.

Let $f: \I\lrar \M$ be a diagram in a combinatorial model category $\M$. Recall that a cocone $\ovl{f}: \I^{\triangleright} \lrar \M$ over $f$ is called a \textbf{homotopy colimit diagram} if for some projectively cofibrant replacement $f^{\cof} \lrar f$, the composed map
$ \displaystyle\mathop{\colim} f^{\cof}(i) \lrar \displaystyle\mathop{\colim} f(i) \lrar \ovl{f}(\ast) $
is a weak equivalence (where $\ast \in \I^{\triangleright}$ denotes the cone point). A functor $\G: \M \lrar \N$ preserving weak equivalences is said to \textbf{preserve $\I$-indexed homotopy colimits} if it maps $\I^{\triangleright}$-indexed homotopy colimit diagrams to homotopy colimit diagrams.

\begin{define}[cf.~{\cite[Definition 6.1.1.6]{Lur14}}]
Let $\M$ be a model category and let $\NN$ be the poset of non-negative integers as above. We will say that $\M$ is \textbf{differentiable} if for every homotopy finite category $\I$ (i.e., a category whose nerve is a finite simplicial set), the right derived limit functor $\RR\lim: \M^\I \lrar \M$ preserves $\NN$-indexed homotopy colimits. We will say that a Quillen adjunction $\L: \M \adj \N: \R$ is \textbf{differentiable} if $\M$ and $\N$ is differentiable and $\mathbb{R}\R$ preserves $\NN$-indexed homotopy colimits.
\end{define}

\begin{rem}\label{r:diff}
The condition that $\M$ be differentiable can be equivalently phrased by saying that the derived colimit functor $\LL\colim:\M^{\NN} \lrar \M$ preserves finite homotopy limits. This means, in particular, that if $\M$ is differentiable then the collection of $\Om$-spectra in $\M^{\NN \times \NN}$ is closed under $\NN$-indexed homotopy colimits. 
\end{rem}

\begin{exam}
Recall that a combinatorial model category $\M$ is called \textbf{finitely combinatorial} if the underlying category of $\M$ is compactly generated and there exist sets of generating cofibrations and trivial cofibrations whose domains and codomains are compact (see~\cite{RR15}). The classes of fibrations and trivial fibrations, and hence the class of weak equivalences, are then closed under filtered colimits. Such a model category $\M$ is differentiable because filtered colimit diagrams in $\M$ are already filtered homotopy colimit diagrams, while the functor $\colim: \M^{\NN}\lrar \M$ preserves finite limits and fibrations (and hence finite homotopy limits).
\end{exam}

\begin{lem}\label{l:Y}
Let $\M$ be a weakly pointed combinatorial model category and let $f: X \lrar Y$ be a map of pre-spectra such that $X$ is levelwise cofibrant and $Y$ is an injective fibrant $\Om$-spectrum \textbf{at $m$}, i.e. the square
\begin{equation}\label{e:omega1}\vcenter{\xymatrix@R=4.5ex@C=4.5ex{
Y_{m, m}\ar[r]\ar[d] & Y_{m, m+1}\ar[d]\\
Y_{m+1, m}\ar[r] & Y_{m+1, m+1}
}}\end{equation}
is homotopy Cartesian. Then we may factor $f$ as $X \x{f'}{\lrar} X' \x{f''}{\lrar} Y$ such that
\begin{enumerate}[(1)]
\item $f'$ is a levelwise cofibration and a stable equivalence and the map $f'_{n,k}: X_{n,k}\lrar X'_{n,k}$ is a weak equivalence for every $n,k$ except $(n,k) = (m,m)$.
\item $X'$ is an $\Om$-spectrum at $m$.
\end{enumerate}
\end{lem}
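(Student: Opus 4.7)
The plan is to build $X'$ from $X$ by attaching cells at position $(m,m)$ via a small object argument, using (an enriched version of) the localizing map of Lemma~\ref{l:localizingmaps}(2) at level $m$:
\[
j_D\colon \left[h_{m+1,m}\coprod_{h_{m+1,m+1}}h_{m,m+1}\right]\otimes D \lrar h_{m,m}\otimes D,
\]
where $D$ ranges over a set $\G$ of cofibrant objects of $\M$ whose derived mapping spaces jointly detect weak equivalences (see Corollary~\ref{c:existence of Sp}). There are two key features of $j_D$ to record. (a) A direct computation shows that $j_D$ is an isomorphism at every position $(n,k)\neq(m,m)$ (being $\emptyset\to D$ at $(m,m)$), so any pushout along $j_D$ affects only the $(m,m)$-entry; moreover, $j_D$ is a Reedy cofibration and a stable equivalence between levelwise cofibrant diagrams, so by Remark~\ref{r:stablewe} any pushout of $j_D$ along a map between levelwise cofibrant diagrams remains a Reedy cofibration and a stable equivalence. (b) By (the proof of) Lemma~\ref{l:localizingmaps}(2), the hypothesis that $Y$ be an injective fibrant $\Om$-spectrum at $m$ is equivalent to the statement that $Y$ is local with respect to the set $\{j_D\}_{D\in\G}$.

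Next I would apply the relative small object argument to $f\colon X\to Y$ using the set $\hat J$ obtained by pushout-product of $\{j_D\}$ with a set of generating cofibrations of $\M$ (so as to encode derived, rather than merely strict, locality). This produces a factorization $X\overset{f'}{\hookrightarrow} X'\overset{f''}{\lrar} Y$ in which $f'$ is a transfinite composition of pushouts along elements of $\hat J$ and $f''$ has the right lifting property against $\hat J$. Every element of $\hat J$ inherits the features (a) of the underlying $j_D$, hence the same holds for each cellular pushout and, by taking transfinite composition of Reedy cofibrations between levelwise cofibrant diagrams (and using that stable equivalences are detected by derived mapping spaces into $\Om$-spectra, which convert such transfinite colimits to limits of weak equivalences), for $f'$ itself. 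In particular $f'$ is a Reedy cofibration, a stable equivalence, and an isomorphism at every $(n,k)\neq(m,m)$, establishing condition (1). For condition (2), the RLP of $f''$ against $\hat J$, combined with the locality of $Y$ from (b), forces $X'$ to be local with respect to each $j_D$, which by Lemma~\ref{l:localizingmaps}(2) is exactly the condition that the $m$-th diagonal square of $X'$ is homotopy Cartesian.

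The main obstacle I expect is the technical step of verifying that the RLP of $f''$ against the enriched set $\hat J$ genuinely encodes derived locality of $X'$, rather than merely a strict lifting property. This is handled by the standard pushout-product enrichment in the small object argument for combinatorial model categories, together with the mapping-space analysis from the proof of Lemma~\ref{l:localizingmaps}; the outcome is that the composite comparison map $\Map^h(h_{m,m}\otimes D,X')\to \Map^h(\dom(j_D),X')\times^h_{\Map^h(\dom(j_D),Y)}\Map^h(h_{m,m}\otimes D,Y)$ is a weak equivalence, and one then divides through by the locality of $Y$ to conclude locality of $X'$.
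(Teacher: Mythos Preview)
Your overall strategy—run a relative small-object argument using the single localizing map $j_D$ at level $m$ and read off the $\Omega$-spectrum condition from the resulting RLP—is a genuinely different route from the paper's. The paper never iterates: after first replacing $X$ by an injectively fibrant object over $Y$ (via an injective trivial cofibration, which is a levelwise equivalence), it \emph{directly} sets $X'_{m,m}=P$, where $P$ is obtained by factoring
\[
X_{m,m}\;\lrar\;Y_{m,m}\times_{\big[Y_{m,m+1}\times_{Y_{m+1,m+1}}Y_{m+1,m}\big]}\big[X_{m,m+1}\times_{X_{m+1,m+1}}X_{m+1,m}\big]
\]
as a cofibration followed by a trivial fibration. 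Because $Y$ is an $\Omega$-spectrum at $m$ and injectively fibrant, the projection to $X_{m,m+1}\times_{X_{m+1,m+1}}X_{m+1,m}$ is a trivial fibration, so $P$ is visibly a model for the homotopy pullback and condition (2) holds by inspection. The diagram $X'$ is then assembled by a single pushout in $\M^{\NN\times\NN}$, isomorphic to $X$ away from $(m,m)$. This explicit identification of $X'_{m,m}$ with $\Omega X_{m+1,m+1}$ is what later feeds into Remark~\ref{r:om-oo}.

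Your argument, by contrast, has a real gap at the step you yourself flag. First, the phrase ``pushout-product of $\{j_D\}$ with a set of generating cofibrations of $\M$'' is not well-typed: there is no tensoring of $\M^{\NN\times\NN}$ over $\M$ in this generality. You presumably intend the usual horns $\Lambda(j_D,n)$ built from a cosimplicial frame and the boundary inclusions $\partial\Delta^n\hookrightarrow\Delta^n$; granted that, feature (a) does survive levelwise. The more serious issue is condition (2). The RLP of $f''$ against the horns yields that
\[
\Map(\tilde{h_{m,m}\otimes D},X')\lrar \Map(\tilde{\dom(j_D)},X')\times_{\Map(\tilde{\dom(j_D)},Y)}\Map(\tilde{h_{m,m}\otimes D},Y)
\]
is a trivial Kan fibration of \emph{strict} (framed) mapping spaces. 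For $Y$ these compute derived mapping spaces since $Y$ is injectively fibrant, but for $X'$ they do not: your $X'_{m,m}$ is obtained from $X_{m,m}$ by cell attachment and has no reason to be fibrant in $\M$, and the entries $X_{m,m+1},X_{m+1,m},X_{m+1,m+1}$ are only assumed cofibrant, so neither $\Map(\tilde D,X'_{m,m})$ nor the strict pullback $X_{m,m+1}\times_{X_{m+1,m+1}}X_{m+1,m}$ need have the correct homotopy type. In the standard localization machinery this is exactly why one augments the horns by the original generating trivial cofibrations—but doing so here would alter entries away from $(m,m)$ and destroy condition (1). The paper circumvents all of this by the preliminary injective-fibrant replacement of $X$ and the explicit one-step construction of $P$.
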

\begin{proof}
We first note that we may always factor $f$ as an injective trivial cofibration $X \lrar X''$ followed by an injective fibration $X'' \lrar Y$. Replacing $X$ with $X''$ we may assume without loss of generality that $f$ is an injective fibration.
Let 
$$ X_{m, m}\lrar P\lrar Y_{m,m} \times_{\left[Y_{m,m+1} \times_{Y_{m+1,m+1}}Y_{m+1,m}\right]} \left[X_{m, m+1}\times_{X_{m+1, m+1}} X_{m+1, m}\right] $$
be a factorization in $\M$ into a cofibration followed by a trivial fibration. By our assumption on $Y$ the map $Y_{m,m} \lrar Y_{m,m+1} \times_{Y_{m+1,m+1}}Y_{m+1,m}$ is a trivial fibration and hence the composed map $P \lrar X_{m, m+1}\times_{X_{m+1, m+1}} X_{m+1, m}$ is a trivial fibration as well.
Associated to the cofibration $j: X_{m, m}\lrar P$ is now a square of $(\NN\times\NN)$-diagrams
\begin{equation}\label{e:PQ}
\vcenter{\xymatrix{
\left(h_{m, m+1}\coprod_{h_{m+1, m+1}} h_{m+1, m}\right)\otimes X_{m, m}\ar[r]\ar[d] & \left(h_{m, m+1}\coprod_{h_{m+1, m+1}} h_{m+1, m}\right)\otimes P\ar[d]\\
h_{m, m}\otimes X_{m, m}\ar[r] & h_{m, m}\otimes P
}}
\end{equation}
The rows of these diagarms are stable equivalences and levelwise cofibrations between levelwise cofibrant objects. It follows that the induced map $i_m\Box j: Q\lrar h_{m, m}\otimes P$ from the (homotopy) pushout to $h_{m, m}\otimes P$ is a stable equivalence and a levelwise cofibration (see Remark~\ref{r:stablewe}). One can easily check that $i_m\square j$ is an isomorphism in every degree, except in degree $(m, m)$ where it is the inclusion $X_{m, m}\lrar P$. We now define $X'$ as the pushout
$$\xymatrix{
Q\ar[r]\ar[d] & h_{m, m}\otimes P\ar[d]\\
X\ar[r] & X'
}$$
where the left vertical map is the natural map. Since $Q$ and $X$ are levelwise cofibrant, the resulting map $X\lrar X'$ is a stable equivalence and an isomorphism in all degrees, except in degree $(m, m)$ where it is the cofibration $X_{m, m}\lrar P$. we now see that the map $X \lrar X'$ satisfies properties (1) and (2) above by construction. 
\end{proof}

\begin{cor}\label{c:Ln}
Let $\M$ be a weakly pointed combinatorial model category and let $f: X \lrar Y$ be a map in $\M^{\NN\times\NN}$ between pre-spectra such that $X$ is levelwise cofibrant and $Y$ is an injective fibrant $\Om$-spectrum \textbf{below $n$}, i.e., it is an $\Om$-spectrum at $m$ for every $m<n$. 
Then we may factor $f$ as $X \x{f'}{\lrar} L_nX \x{f''}{\lrar} Y$ such that $f'$ is a levelwise cofibration and a stable equivalence, $L_nX$ is an $\Om$-spectrum below $n$ and the induced map $f'[n]: X[n]\lrar L_nX[n]$ 
is a levelwise weak equivalence of pre-spectra. In particular, if the induced map $f[n]:X[n] \lrar Y[n]$ is already a levelwise weak equivalence then $f$ is a stable equivalence.
\end{cor}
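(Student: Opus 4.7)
The plan is to construct $L_nX$ by iterating Lemma~\ref{l:Y} a total of $n$ times, once for each $m \in \{0, 1, \dots, n-1\}$, carried out in \emph{decreasing} order of $m$ — starting at $m = n-1$ and ending at $m = 0$. The underlying observation is that an application of Lemma~\ref{l:Y} at index $m$ modifies its input only at position $(m, m)$ (leaving all other positions isomorphic, as is visible from the explicit pushout in its proof), while the $\Om$-spectrum condition at index $m$ involves only the four positions with both coordinates in $\{m, m+1\}$. A subsequent application at some $m' < m$ therefore cannot disturb the $\Om$-spectrum property at $m$ once established. This reverse ordering is the essential point of the argument; performing the iteration the other way around would systematically undo each step with the next, and this is really the only obstacle the proof has to negotiate.

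Concretely, set $X^{(0)} := X$, and for $0 \leq k < n$ let $X^{(k+1)}$ be obtained by applying Lemma~\ref{l:Y} to the map $X^{(k)} \lrar Y$ at $m = n-1-k$. The hypotheses are met because $X^{(k)}$ remains a levelwise cofibrant pre-spectrum at every stage (these properties being preserved by the construction in Lemma~\ref{l:Y}), while $Y$ is an $\Om$-spectrum at that $m$ by assumption. This yields a factorization $X^{(k)} \lrar X^{(k+1)} \lrar Y$ in which the first map is a levelwise cofibration and stable equivalence, $X^{(k+1)}$ is an $\Om$-spectrum at $m = n-1-k$, and the map is an isomorphism at every index other than $(m,m)$. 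By the observation above, $X^{(k+1)}$ retains the $\Om$-spectrum property at every index in $\{n-k, \dots, n-1\}$. Setting $L_n X := X^{(n)}$ and letting $f'$ be the composite, one obtains a factorization $X \x{f'}{\lrar} L_n X \x{f''}{\lrar} Y$ where $f'$ is a levelwise cofibration and stable equivalence, $L_n X$ is an $\Om$-spectrum below $n$, and each stage is an isomorphism at positions $(p,q)$ with $p, q \geq n$, so $f'[n]$ is an isomorphism and in particular a levelwise weak equivalence.

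Finally, suppose $f[n]$ is a levelwise weak equivalence. Since $f'[n]$ is an isomorphism, the induced map $f''[n]$ is also a levelwise weak equivalence, and I claim $f''$ itself is a levelwise weak equivalence — whence $f = f'' \circ f'$ is a stable equivalence. At off-diagonal positions, both $L_n X$ and $Y$ take weak zero objects, between which any map is automatically a weak equivalence. Diagonal positions $(m,m)$ with $m \geq n$ are handled by $f''[n]$. For $(m, m)$ with $m < n$ proceed by downward induction on $m$: both $L_n X$ and $Y$ are $\Om$-spectra at $m$, so these entries are homotopy pullbacks of the three neighboring entries at $(m, m+1)$, $(m+1, m)$, $(m+1, m+1)$. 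On the first two $f''$ is a weak equivalence by the off-diagonal argument, and on the last by induction; hence the same holds at $(m,m)$, completing the argument.
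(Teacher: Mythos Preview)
Your argument is correct and matches the paper's approach: iterate Lemma~\ref{l:Y} in decreasing order $m = n-1, \dots, 0$, then for the final claim observe (as you do by downward induction) that a map between pre-spectra which are $\Om$-spectra below $n$ and which is a levelwise equivalence after shifting by $n$ must be a levelwise equivalence. One small overclaim: the \emph{statement} of Lemma~\ref{l:Y} only guarantees a weak equivalence (not an isomorphism) at positions other than $(m,m)$ --- its proof begins with an injective fibrant replacement before forming the pushout you cite --- but this does not affect your argument, since weak equivalences suffice everywhere you invoke the stronger claim.
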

\begin{proof}
Apply Lemma~\ref{l:Y} consecutively for $m=n-1,...,0$ to construct the factorization $X \lrar L_nX \lrar Y$ with the desired properties. Note that if $f[n]: X[n] \lrar Y[n]$ is a levelwise equivalence then the induced map $L_nX[n] \lrar Y[n]$ is a levelwise equivalence and since both $L_nX$ and $Y$ are $\Om$-spectra below $n$ the map $L_nX \lrar Y$ must be a levelwise equivalence. It then follows that $f: X \lrar Y$ is a stable equivalence.
\end{proof}

\begin{cor}\label{c:fibrantreplacement}
Let $\M$ be a weakly pointed differentiable combinatorial model category and let $f: X \lrar Y$ be a map in $\M^{\NN\times\NN}$ such that $X$ is levelwise cofibrant pre-spectrum and $Y$ is an injective fibrant $\Om$-spectrum. Then there exists a sequence of levelwise cofibrations and stable equivalences
$$
X \lrar L_1X \lrar L_2X \lrar  \cdots 
$$
over $Y$ such that for each $n$ the map $X[n]\lrar L_nX[n]$ is a levelwise weak equivalence and $L_nX$ is an $\Omega$-spectrum below $n$. Furthermore, the induced map $X \lrar L_{\infty}X \x{\df}{=} \colim L_nX$ is a stable equivalence and $L_{\infty}X$ is an $\Om$-spectrum.
\end{cor}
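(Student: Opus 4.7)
The plan is to iteratively apply Corollary \ref{c:Ln} to build the sequence $L_n X$ and then invoke differentiability to control the colimit.

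First, I would construct the sequence inductively. Set $L_0 X = X$ with the evident map $X \lrar Y$. Given $L_n X \lrar Y$ with $L_n X$ a levelwise cofibrant pre-spectrum which is an $\Omega$-spectrum below $n$ and such that $X[n] \lrar (L_n X)[n]$ is a levelwise weak equivalence, apply Corollary \ref{c:Ln} to this map with parameter $n+1$: since $Y$ is an $\Omega$-spectrum it is in particular an $\Omega$-spectrum below $n+1$, so we obtain a factorization $L_n X \lrar L_{n+1} X \lrar Y$ in which $L_n X \lrar L_{n+1} X$ is a levelwise cofibration and stable equivalence, $L_{n+1} X$ is an $\Omega$-spectrum below $n+1$, and the map $(L_n X)[n+1] \lrar (L_{n+1} X)[n+1]$ is a levelwise equivalence. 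Combining with the inductive hypothesis (which directly implies $X[n+1] \lrar (L_n X)[n+1]$ is a levelwise equivalence, since shifting by $n+1$ only probes entries at coordinates $\geq n+1$), we obtain that $X[n+1] \lrar (L_{n+1} X)[n+1]$ is a levelwise equivalence.

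Next, I would take $L_\infty X \x{\df}{=} \colim_n L_n X$. Because each transition map is a levelwise cofibration between levelwise cofibrant objects, this sequential colimit is also a sequential homotopy colimit (cf.\ Remark~\ref{r:stablewe}). The map $X \lrar L_\infty X$ is a levelwise cofibration as a transfinite composition of such; to see it is a stable equivalence, note that for any $\Omega$-spectrum $Z$ we have $\Map^h(L_\infty X, Z) \simeq \holim_n \Map^h(L_n X, Z)$, and each term in this tower is already equivalent to $\Map^h(X, Z)$ since each $X \lrar L_n X$ is a stable equivalence.

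The main point, which is where the differentiability hypothesis is used, is to verify that $L_\infty X$ is an $\Omega$-spectrum. The pre-spectrum condition at off-diagonal entries is inherited from each $L_n X$: a sequential homotopy colimit of weak zero objects in a weakly pointed homotopy category is again a weak zero object (the diagram is levelwise equivalent in $\Ho(\M)$ to the constant diagram at $0$). For the diagonal squares, fix $m \geq 0$: for every $n > m$, the square
\begin{equation*}
\xymatrix@R=2.5ex@C=2.5ex{
(L_n X)_{m,m} \ar[r]\ar[d] & (L_n X)_{m,m+1}\ar[d]\\
(L_n X)_{m+1,m} \ar[r] & (L_n X)_{m+1,m+1}
}
\end{equation*}
is homotopy Cartesian because $L_n X$ is an $\Omega$-spectrum below $n \geq m+1$. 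The $m$-th diagonal square of $L_\infty X$ is the sequential homotopy colimit of these squares, and by Remark~\ref{r:diff} sequential homotopy colimits in a differentiable model category preserve finite homotopy limits, so this square is homotopy Cartesian. Thus $L_\infty X$ is an $\Omega$-spectrum and the proof is complete. The key obstacle, resolved by the differentiability assumption, is precisely this final commutation of the sequential colimit with the homotopy pullback defining the $\Omega$-spectrum condition.
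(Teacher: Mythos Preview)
Your proof is correct and follows essentially the same approach as the paper: iterate Corollary~\ref{c:Ln} to build the tower, identify the colimit as a homotopy colimit because the transition maps are levelwise cofibrations between levelwise cofibrant objects, and use differentiability (via Remark~\ref{r:diff}) to conclude that the $\Omega$-spectrum condition passes to the sequential homotopy colimit. Your justification of the stable equivalence $X\lrar L_\infty X$ via derived mapping spaces is a slightly more explicit unpacking of the paper's assertion that stable equivalences are closed under sequential homotopy colimits, and your separate verification of the pre-spectrum condition on $L_\infty X$ is a detail the paper leaves implicit, but neither constitutes a genuinely different route.
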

\begin{proof}
Define the objects $L_nX$ inductively by requiring $L_nX\lrar L_{n+1}X$ to be the map from $L_nX$ to an $\Omega$-spectrum below $n+1$ constructed in Corollary~\ref{c:Ln}. The resulting sequence is easily seen to have all the mentioned properties.

Since all the maps $L_nX \lrar L_{n+1}X$ are levelwise cofibrations between levelwise cofibrant objects it follows that the map $X\lrar L_\infty X$ is the homotopy colimit in $\M^{\NN \times \NN}$ of the maps $X \lrar L_nX$. Since the collection of stable equivalences between pre-spectra is closed under homotopy colimits we may conclude that the map $X \lrar L_\infty X$ is a stable equivalence between pre-spectra. The assumption that $\M$ is differentiable implies that for each $m$ the collection of $\Om$-spectra at $m$ is closed under sequential homotopy colimits. We may therefore conclude that $L_\infty X$ is an $\Omega$-spectrum at $m$ for every $m$, i.e., an $\Om$-spectrum. 
\end{proof}

\begin{rem}\label{r:om-oo}
Since the map $X_{n,n} \lrar (L_nX)_{n,n}$ is a weak equivalence in $\M$ and $L_nX$ is a pre-spectrum and an $\Om$-spectrum below $n$ it follows that the space $(L_nX)_{0, 0}$ is a model for $n$-fold loop object $\Omega^n X_{n, n}$ in $\M$. The above result then asserts that for any pre-spectrum $X$, its $\Om$-spectrum replacement $L_{\infty}X$ is given in degree $(k,k)$ by $\hocolim_n \Omega^n X_{k+n, k+n}$. In particular $\mathbb{R}\Om^{\infty}X \simeq \hocolim_n\Om^nX_{n,n}$.
\end{rem}

\begin{cor}\label{c:preservationofweakequivalences}
Let $\R: \M\lrar \N$ be a differentiable right Quillen functor between weakly pointed combinatorial model categories. Then the right derived Quillen functor $\RR\R^{\NN \times \NN}: \M^{\NN\times\NN}_{\Reedy} \lrar \N^{\NN\times\NN}_{\Reedy}$ preserves stable equivalences between pre-spectra. If in addition $\RR\R$ detects weak equivalences then $\RR\R^{\NN\times\NN}$ detects stable equivalences between pre-spectra.
\end{cor}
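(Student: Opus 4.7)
My plan is to reduce the statement to a levelwise equivalence check, using the explicit $\Omega$-spectrum replacement from Corollary~\ref{c:fibrantreplacement} together with the formula in Remark~\ref{r:om-oo}. Given a stable equivalence $f: X \lrar Y$ between pre-spectra, I would first replace $X$ and $Y$ by Reedy cofibrant models and apply Corollary~\ref{c:fibrantreplacement} to fit $f$ into a commutative square of levelwise cofibrations and stable equivalences
$$\xymatrix{X \ar[r]\ar[d] & L_\infty X\ar[d] \\ Y \ar[r] & L_\infty Y}$$
with $L_\infty X, L_\infty Y$ $\Om$-spectra. The right-hand vertical map is then a stable equivalence between $\Om$-spectra, and hence a levelwise weak equivalence by Definition~\ref{d:stable-equiv}.

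The core step is to show that applying $\RR\R^{\NN\times\NN}$ to this square yields stable equivalences on the horizontal maps. Being a right adjoint, $\RR\R$ preserves the terminal object; being right Quillen, it preserves weak equivalences between fibrant objects and homotopy Cartesian squares. Hence $\RR\R^{\NN\times\NN}$ sends pre-spectra to pre-spectra, $\Om$-spectra to $\Om$-spectra, and levelwise equivalences between Reedy fibrant pre-spectra to levelwise equivalences. The central computation identifies $\RR\R L_\infty X$ with the $\Om$-spectrum replacement of $\RR\R X$: using Remark~\ref{r:om-oo} and the fact that $\RR\R$ preserves $\Om$ (right Quillen) and $\NN$-indexed homotopy colimits (differentiability), one obtains
$$\RR\R\bigl((L_\infty X)_{k,k}\bigr) \;\simeq\; \hocolim_n \Om^n \RR\R\bigl(X_{k+n,k+n}\bigr) \;\simeq\; (L_\infty \RR\R X)_{k,k},$$
so $\RR\R X \lrar \RR\R L_\infty X$ is a stable equivalence, and similarly for $Y$. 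Two-out-of-three in the stable class of equivalences then gives that $\RR\R f$ is a stable equivalence.

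For the detection statement, if $\RR\R$ detects weak equivalences and $\RR\R f$ is a stable equivalence then the same zigzag forces $\RR\R L_\infty X \lrar \RR\R L_\infty Y$ to be a levelwise equivalence in $\N$; detection then implies $L_\infty X \lrar L_\infty Y$ is a levelwise equivalence in $\M$, and two-out-of-three concludes that $f$ is a stable equivalence. I expect the main obstacle to be the identification $\RR\R(L_\infty X) \simeq L_\infty(\RR\R X)$ on the diagonal: this is precisely where both hypotheses on $\R$ are genuinely used, since without differentiability $\RR\R$ would not commute with the $\NN$-indexed homotopy colimit that builds $L_\infty$, and without the right Quillen property it would not commute with $\Om$.
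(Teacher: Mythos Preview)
Your proposal is correct and follows essentially the same strategy as the paper: build compatible $\Om$-spectrum replacements $L_\infty X$ and $L_\infty Y$, observe that $\RR\R$ sends $\Om$-spectra to $\Om$-spectra, show that $\RR\R X \lrar \RR\R L_\infty X$ is a stable equivalence, and conclude by two-out-of-three.

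The only noteworthy difference lies in the justification of the key step. The paper argues at each finite stage: since $L_nX$ is an $\Om$-spectrum below $n$ and $X[n]\lrar (L_nX)[n]$ is a levelwise equivalence, the same holds after applying $\RR\R^{\NN\times\NN}$, and then the final clause of Corollary~\ref{c:Ln} (applied in $\N$) shows directly that $\RR\R X \lrar \RR\R L_nX$ is a stable equivalence; differentiability is then used only to pass to the sequential homotopy colimit. You instead pass to the colimit first and invoke the formula of Remark~\ref{r:om-oo} to identify $(\RR\R L_\infty X)_{k,k}$ with $(L_\infty \RR\R X)_{k,k}$. This is fine, but be aware that an identification of objects is not quite the same as showing that the \emph{map} $\RR\R X \lrar \RR\R L_\infty X$ is a stable equivalence: you should observe that your chain of equivalences is natural in the map $X\lrar L_\infty X$, so that it identifies the comparison map $L_\infty(\RR\R X)\lrar \RR\R(L_\infty X)$ (constructed by applying Corollary~\ref{c:fibrantreplacement} to $\RR\R X \lrar \RR\R L_\infty X$) with a levelwise equivalence. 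The paper's finite-stage argument sidesteps this bookkeeping. You also spell out the detection statement, which the paper leaves implicit.
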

\begin{proof}
Let $f: X \lrar Y$ be a stable equivalence between pre-spectra. We may assume without loss of generality that $X$ is levelwise cofibrant.
Let
$$
Y \lrar L_1Y \lrar L_2Y \lrar  \cdots 
$$
be constructed as in Corollary~\ref{c:fibrantreplacement} with respect to the map $Y \lrar \ast$ and let $Y_{\infty} = \colim_nL_nY$. Similarly, let
$$
X \lrar L_1X \lrar L_2X \lrar  \cdots 
$$
be a sequence as in Corollary~\ref{c:fibrantreplacement} constructed with respect to the map $X \lrar Y_\infty$, and let $X_\infty= \colim_nL_nX$. Since $L_nX$ is an $\Om$-spectrum below $n$ it follows that $\RR\R^{\NN \times \NN}(L_nX)$ is an $\Om$-spectrum below $n$. Furthermore, since the map $\RR\R^{\NN \times \NN}(X)[n] \lrar \RR\R^{\NN \times \NN}(L_nX)[n]$ is a levelwise equivalence it follows from the final part of Corollary~\ref{c:Ln} that the map $\RR\R^{\NN \times \NN}(X) \lrar \RR\R^{\NN \times \NN}(L_nX)$ is a stable equivalence. By the same argument the map $\RR\R^{\NN \times \NN}(Y) \lrar \RR\R^{\NN \times \NN}(L_nY)$ is a stable equivalences. Since the maps $L_nX \lrar L_{n+1}X$ are levelwise cofibrations between levelwise cofibrant objects it follows that $X_\infty \simeq \hocolim L_nX$ and $Y_\infty \simeq \hocolim_nL_nY $. Since $\RR \R$ preserves sequential homotopy colimits by assumption we may conclude that the maps
$\RR\R^{\NN \times \NN}(X) \lrar \RR\R^{\NN \times \NN}(X_\infty)$ and $\RR\R^{\NN \times \NN}(Y) \lrar \RR\R^{\NN \times \NN}(Y_\infty)$ are stable equivalences. Now since $X_\infty \lrar Y_\infty$ is a stable equivalence between $\Om$-spectra it is also a levelwise weak equivalence. We thus conclude that the map $\RR\R^{\NN \times \NN}(X_\infty) \lrar \RR\R^{\NN \times \NN}(Y_\infty)$ is a levelwise equivalence. The map $\RR\R^{\NN \times \NN}(X) \lrar \RR\R^{\NN \times \NN}(Y)$ is hence a stable equivalence in $\N^{\NN \times \NN}$ by the $2$-out-of-$3$ property.
\end{proof}

\begin{cor}\label{c:new}
Let $\L:\M \adj \N:\R$ be a differentiable Quillen pair of weakly pointed left proper combinatorial model categories and let $n\geq 0$ be a natural number.
\begin{enumerate}[(1)]
\item 
If the derived unit $u_X: X\lrar  \RR\R (\L X)$ either has the property that $\Om^n u_X$ is an equivalence for every cofibrant $X$ or $\Sig^nu_X$ is an equivalence for every cofibrant $X$, then the derived unit of $\Sp(\L) \dashv \Sp(\R)$ is weak equivalence for every levelwise cofibrant pre-spectrum. 
\item 
If the derived counit $\nu_X: \LL\L(\R Y)\lrar Y$ either has the property that $\Om^n \nu_X$ is an equivalence for every fibrant $Y$ or $\Sig^n\nu_Y$ is an equivalence for every fibrant $Y$, then the derived counit of $\Sp(\L) \dashv \Sp(\R)$ is weak equivalence for every levelwise fibrant pre-spectrum.
\end{enumerate}
\end{cor}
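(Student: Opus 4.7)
The approach is to analyze the derived unit of $\Sp(\L)\dashv\Sp(\R)$ at a levelwise cofibrant pre-spectrum $X$ by means of the explicit $\Om$-spectrum replacement $L_\infty$ of Corollary~\ref{c:fibrantreplacement} together with the formula of Remark~\ref{r:om-oo}. Since $\L$ is left Quillen and preserves weak zero objects between cofibrant ones, $\Sp(\L)X = \L^{\NN\times\NN}X$ is a levelwise cofibrant pre-spectrum in $\N$, and the derived unit is modeled by
$$u^{\Sp}_X\colon X \lrar \R^{\NN\times\NN}(L_\infty \L^{\NN\times\NN}X),$$
whose target is an $\Om$-spectrum since $\R$ preserves weak zero objects and homotopy limits. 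Applying Corollary~\ref{c:fibrantreplacement} to $X$ to obtain an $\Om$-spectrum replacement $L_\infty X$, the map $u^{\Sp}_X$ is a stable equivalence if and only if the induced lift $L_\infty X \to \R^{\NN\times\NN}(L_\infty \L^{\NN\times\NN}X)$ of $\Om$-spectra is a levelwise weak equivalence. By Remark~\ref{r:om-oo} together with the fact that $\R$ commutes with $\Om$ (being right Quillen) and with sequential homotopy colimits (by differentiability), this task becomes, at each level $(k,k)$, that of showing the map
$$
\hocolim_m \Om^m_{\M} X_{k+m,k+m} \lrar \hocolim_m \Om^m_{\M} \RR\R\L X_{k+m,k+m},
$$
induced levelwise by $\Om^m u_{X_{k+m,k+m}}$, is a weak equivalence in $\M$ for every $k\geq 0$.

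In case A, $\Om^m u_Y = \Om^{m-n}\Om^n u_Y$ is a weak equivalence for every cofibrant $Y$ and every $m\geq n$, since $\Om^{m-n}$ preserves weak equivalences between fibrant objects. Cofinality of the inclusion $\{m\geq n\}\hookrightarrow\NN$ in sequential homotopy colimits then yields that the induced map on hocolims is a weak equivalence. In case B, the stability of $\Sp(\M)$ (Proposition~\ref{l:stability}) implies that $\Sigma^n\simeq\RR[n]$ is an auto-equivalence of $\Ho(\Sp(\M))$, so $u^{\Sp}_X$ is a stable equivalence if and only if its spectrum-level shift $u^{\Sp}_X[n]$ is; the parallel cofinality argument with $\Sigma^m$ in place of $\Om^m$, using that $\Sigma^{m-n}$ preserves weak equivalences between cofibrant objects and hence that $\Sigma^m u_Y$ is a weak equivalence for $m\geq n$, yields the desired conclusion after absorbing the shift into the hocolim indexing.

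Part (2) is handled by the dual argument: for a levelwise fibrant pre-spectrum $Y$ in $\N$, a cofibrant replacement of $\Sp(\R)Y$ is modeled up to stable equivalence by the pre-spectrum replacement of Remark~\ref{r:prespectrumrep}, and the derived counit is analogously expressed as a levelwise hocolim of counits, to which the same cofinality arguments apply. The main obstacle lies in case B (and its dual): the levelwise suspension $\Sigma^n$ in $\M$ is not invertible, so extracting usable information from the hypothesis ``$\Sigma^n u$ is an equivalence'' at the level of the $\Om$-spectrum replacement formula $\hocolim_m \Om^m$ requires invoking the stability of $\Sp(\M)$, which makes the shift $[n]$ an auto-equivalence of the stable homotopy category. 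The naturality of the unit and the compatibility of shifts with $\L^{\NN\times\NN}$ and $\R^{\NN\times\NN}$ then allow this hypothesis to be transported to a shifted cofinal tail of the hocolim over $m$.
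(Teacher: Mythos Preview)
Your treatment of case A (the $\Om^n$ hypothesis) is essentially the paper's argument: pass to the explicit $\Om$-spectrum replacement via Remark~\ref{r:om-oo} and use cofinality of $\{m\geq n\}\subseteq\NN$. That part is fine.

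Case B, however, does not go through as you have written it. The phrase ``parallel cofinality argument with $\Sigma^m$ in place of $\Om^m$'' has no content: the fibrant replacement formula $\hocolim_m \Om^m(-)$ always involves loops, and there is no dual suspension formula for $\Om$-spectrum replacement. Applying the shift $[n]$ to $u^{\Sp}_X$ merely reindexes the pre-spectrum; at level $(k,k)$ you are still faced with showing that $\hocolim_m \Om^m u_{X_{k+n+m,k+n+m}}$ is an equivalence, and the hypothesis that $\Sigma^n u_Y$ is an equivalence gives no direct control over $\Om^m u_Y$ for any $m$. So there is a genuine gap.

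The paper's route in case B avoids the $L_\infty$ machinery entirely and is considerably shorter. The key point---which you set up but did not exploit---is that by Corollary~\ref{c:preservationofweakequivalences} the functor $\RR\R^{\NN\times\NN}$ already preserves stable equivalences between pre-spectra (this is where differentiability of $\R$ enters). Hence the derived unit of $\Sp(\L)\dashv\Sp(\R)$ at a levelwise cofibrant pre-spectrum $A$ is modeled, up to stable equivalence, simply by the map $A \to \R^{\NN\times\NN}\big((\L^{\NN\times\NN}A)^{\text{Reedy-fib}}\big)$, which is \emph{levelwise} the derived unit $u_{A_{i,j}}$. Now apply the $n$-fold suspension in $\Sp(\M)$: since suspension, like any homotopy colimit in $\M^{\NN\times\NN}$, is computed levelwise, the result has components $\Sigma^n u_{A_{i,j}}$, which are equivalences by hypothesis. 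Stability of $\Sp(\M)$ (Proposition~\ref{l:stability}) then forces the original map to be a stable equivalence. No cofinality or $L_\infty$ formula is needed for this half.
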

\begin{proof}
We will only prove the first claim; the second claim follows from a similar argument. 
Let $A\in\M^{\NN\times\NN}$ be a levelwise cofibrant pre-spectrum object in $\M$. Since $\R$ is differentiable 
we have by Corollary~\ref{c:preservationofweakequivalences} that $\RR\R^{\NN\times\NN}$ preserves stable equivalences between pre-spectra. It follows that the derived unit map is 
is given levelwise by the derived unit map of the adjunction $\L\dashv\R$. In particular, if each component of this map becomes an equivalence upon applying $\Sig^n$, then the entire unit map becomes a levelwise equivalence after suspending $n$ times (recall that suspension in $\Sp(\M)$, like all homotopy colimits, can be computed levelwise). Since $\Sp(\M)$ is stable this means that the derived unit itself is an equivalence. 

Now assume that $u_X$ becomes an equivalence after applying $\Om^n$. Since $\L\dashv \R$ is a Quillen adjunction between weakly pointed model categories, the above map is a map of pre-spectra. It therefore suffices to check that the induced map
$$ A^{\fib} \lrar \R\left(\L(A)^{\Reedy-\fib}\right)^{\fib} $$
on the (explicit) fibrant replacements provided by Corollory~\ref{c:fibrantreplacement} is a levelwise equivalence. By Remark~\ref{r:om-oo} this map is given at level $(k,k)$ by the induced map
$$ \hocolim_i \Om^i A_{k+i, k+i} \lrar \hocolim_i \Om^i \RR\R(\L(A_{k+i,k+i})).$$
We now observe that $\Om^iA_{k+i,k+i} \lrar \Om^i \RR\R(\L(A_{k+i,k+i}))$ is a weak equivalence for all $i \geq n$ by our assumption, and so the desired result follows.
\end{proof}

\section{The tangent bundle}
\label{ss:cotangent}\label{s:global}

\subsection{The tangent bundle as a relative model category}\label{s:modcatfib}
The tangent bundle $\T\M$ can informally be thought of as describing the homotopy theory of parameterized spectra in $\M$, with varying base objects. Accordingly, one can consider $\T\M$ itself as being parameterized by the objects of $\M$: for every object $A\in \M$, there is a full subcategory of the tangent bundle consisting of spectra parameterized by $A$. More precisely, the tangent bundle fits into a commuting triangle of right Quillen functors
\begin{equation}\label{d:tangentfib}\vcenter{\xymatrix{
\T\M\ar[rd]_\pi\ar[rr]^{\Om^\infty_+} & & \M^{[1]}\ar[ld]^{\codom}\\
& \M & 
}}\end{equation}
where the functor $\Om^\infty_+$ sends a diagram $X: (\NN\times \NN)_\ast\lrar \M$ to its restriction $X(0, 0)\lrar X(\ast)$ and the functor ``$\codom$'' takes the codomain of an arrow in $\M$.

The functors $\pi: \T\M\lrar \M$ and $\codom: \M^{[1]}\lrar \M$ have various favorable properties. For example, in addition to being right adjoint functors, they are left adjoints as well, with right adjoints given by the formation of constant diagrams. More importantly, they are both Cartesian and coCartesian fibrations, with fiber over $A\in \M$ given by the categories $\big(\M_{A//A}\big)^{\NN\times \NN}$ and $\M_{/A}$, respectively.

The purpose of this section is to show that this behaviour persists at the homotopical level. In \S\ref{s:modcatfib} we discuss how the functor $\pi$ behaves like a fibration of model categories (cf.\ \cite{HP}) which is classified by a suitable diagram of model categories and left Quillen functors between them. In \S\ref{s:oo-stab}, we show that the triangle of right Quillen functors \eqref{d:tangentfib} realizes $\T\M$ as a model for the tangent $\infty$-category of the $\infty$-category underlying $\M$.

Recall that a suitable version of the classical Grothendieck correspondence asserts that the data of a (pseudo-)functor from an ordinary category $\C$ to the $(2,1)$-category of categories and adjunctions is equivalent to the data of a functor $\D \lrar \C$ which is simultaneously a Cartesian and a coCartesian fibration. This result admits a model categorical analogue, developed in~\cite{HP}, classifying certain fibrations $\N\lrar \M$ of categories equipped with three wide subcategories 
$$
\W_\M,\Cof_\M,\Fib_\M \subseteq \M
$$
(similarly for $\N$). We will refer to such a category equipped with three wide subcategories as a \textbf{pre-model category}. The morphisms in $\W_\M$, $\Cof_\M$, $\Fib_\M$, $\Cof_\M \cap \W_\M$ and $\Fib_\M \cap \W_\M$ will be called weak equivalences, cofibrations, fibrations, trivial cofibrations and trivial fibrations respectively. 

\begin{defn}\label{d:relative-model}
Let $\M,\N$ be two pre-model categories and $\pi:\N \lrar \M$ a (co)Cartesian fibration which preserves the classes of (trivial) cofibrations and (trivial) fibrations. We will say that $\pi$ exhibits $\N$ as a \textbf{model category relative to $\M$} if the following conditions are satisfied:
\begin{enumerate}
\item
$\pi: \N \lrar \M$ is (co)complete, i.e., admits all relative limits and colimits. 
\item
Let $f: X \lrar Y$ and $g: Y \lrar Z$ be morphisms in $\N$. If two of $f,g,g\circ f$ are in $\W_\N$ and if the image of the third is in $\W_\M$ then the third is in $\W_\N$.
\item
$(\Cof_\N\cap \W_\N,\Fib_\N)$ and $(\Cof_\N,\Fib_\N\cap \W_\N)$ are $\pi$-weak factorization systems relative to $(\Cof_\M \cap \W_\M,\Fib_\M)$ and $(\Cof_\M,\Fib_\M \cap \W_\M)$ respectively. In other words, every lifting/factorization problem in $\N$ which has a solution in $\M$ admits a compatible solution in $\N$ (see \cite[Definition 5.0.2]{HP} for the full details).
\end{enumerate}
In this case we will also say that $\pi$ is a \textbf{relative model category}.
\end{defn}

\begin{rem}
In~\cite{HP} the authors consider the notion of a relative model category in the more general case where $\pi$ is not assumed to be a (co)Cartesian fibration. However, for our purposes we will only need to consider the more restrictive case, which is also formally better behaved (for example, it is closed under composition, see~\cite{HP-arrtum}).
\end{rem}


\begin{rem}\label{r:retractarg}
If $\N \lrar \M$ is a relative model category then the cofibrations and trivial fibrations in $\N$ determine each other, in the following sense: if $f: X\lrar Y$ is a map such that $\pi(f)$ is a cofibration in $\M$, then $f$ is a cofibration in $\N$ if and only if it has the relative left lifting property against all trivial fibrations in $\N$ which cover identities. Indeed, this follows from the usual retract argument, where one factors $f$ as a cofibration $i: X\lrar \tilde{Y}$ over $\pi(f)$, followed by a trivial fibration $p: \tilde{Y}\lrar Y$ over the identity and shows that $f$ is a retract of $i$ (over $\pi(f)$) using that $f$ has the relative left lifting property against $p$.
\end{rem}

\begin{exam}
If $\pi: \N \lrar \M$ is a relative model category and $\M$ is a model category then $\N$ is a model category and $\pi$ is both a left and right Quillen functor.
\end{exam}

If $\pi: \N\lrar \M$ is a relative model category, then the functor $\emptyset: \M\lrar \N$ preserves all (trivial) cofibrations and the functor $\ast: \M\lrar \N$ preserves all (trivial) fibrations. Orthogonally, for every object $A\in \M$, the (co)fibrations and weak equivalences of $\N$ that are contained in the fiber $\N_A$, together determine a model structure on $\N_A$: indeed, the relative factorization, lifting and retract axioms in particular imply these axioms \emph{fiberwise}. 

Since $\pi: \N \lrar \M$ is a (co)Cartesian fibration every map $f: A\lrar B$ in $\M$ determines an adjoint pair
$$
f_!: \N_A\adj \N_B : f^*.
$$
This adjunction is a Quillen pair, as one easily deduces from the following result:
\begin{lem}\label{l:cofibrationsinrelmod}
Let $\pi: \N\lrar \M$ be a relative model category and let $f: X\lrar Y$ be a map in $\N$. Then $f$ is a (trivial) cofibration if and only if $\pi(f)$ is a (trivial) cofibration in $\M$ and the induced map $\pi(f)_!X\lrar Y$ is a (trivial) cofibration in $\N_{\pi(Y)}$.
\end{lem}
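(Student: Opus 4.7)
The plan is to factor $f$ through its coCartesian lift. Since $\pi$ is a coCartesian fibration, $f$ decomposes canonically as
\[
X \xrightarrow{c} \pi(f)_!X \xrightarrow{g} Y,
\]
with $c$ a $\pi$-coCartesian lift of $\pi(f)$ and $g$ the induced morphism in the fiber $\N_{\pi(Y)}$. The lemma then reduces to two assertions: (i) the coCartesian lift $c$ is itself a (trivial) cofibration whenever $\pi(f)$ is one; and (ii) granted (i), the composite $f = g\circ c$ is a (trivial) cofibration in $\N$ if and only if $g$ is a (trivial) cofibration in $\N_{\pi(Y)}$.

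For (i) I would apply the criterion of Remark \ref{r:retractarg}: since $\pi(c) = \pi(f)$ is a (trivial) cofibration by hypothesis, it suffices to verify that $c$ has the relative left lifting property against (trivial) fibrations $q\colon E\to B'$ in $\N$ covering identities. For such a lifting problem the projected square in $\M$ admits the identity of $\pi(Y)$ as a solution, which forces the top arrow $a\colon X\to E$ to project to $\pi(f)$. The coCartesian universal property of $c$ then produces a unique $\tilde a\colon \pi(f)_!X\to E$ over $\id_{\pi(Y)}$ satisfying $\tilde a\circ c=a$, and the uniqueness clause applied to $q\circ \tilde a$ versus the bottom arrow (both projecting to $\id_{\pi(Y)}$) shows that $\tilde a$ is a valid filler.

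For (ii) the forward direction proceeds by precomposition: to show that $g$ has the left lifting property in $\N_{\pi(Y)}$ against a trivial fibration $q$ in the fiber, precompose the given square with $c$ to obtain a lifting problem for $f$ against $q$ in $\N$, and use the uniqueness clause for maps over $\id_{\pi(Y)}$ in the coCartesian property of $c$ to verify that the resulting lift $\ell\colon Y\to E$ satisfies $\ell\circ g$ equals the top arrow of the $g$-square. In the trivial cofibration case, the fact that $g$ is a weak equivalence follows from the relative $2$-out-of-$3$ axiom of Definition \ref{d:relative-model}(2) applied to $f=g\circ c$, since $\pi(g)=\id$ is automatically a weak equivalence in $\M$. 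The backward direction is dual: assuming $\pi(f)$ and $g$ are (trivial) cofibrations, (i) gives the same for $c$, and one solves a lifting problem for $f$ against a trivial fibration (or fibration) in $\N$ by first lifting against $c$ and then against $g$ in the fiber, with the weak equivalence case for $f$ following from ordinary $2$-out-of-$3$. The main subtlety throughout is tracking projections in the coCartesian universal property, but this is forced in all relevant cases by the restriction to lifts over $\id_{\pi(Y)}$.
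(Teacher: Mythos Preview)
Your proof is correct and rests on the same ingredients as the paper's: the coCartesian factorization $f = g\circ c$ together with Remark~\ref{r:retractarg}. The paper's argument is slightly more streamlined in that it translates a relative lifting problem for $f$ directly into one for $g=\pi(f)_!X\to Y$ in a single adjunction step, rather than first establishing (i) that $c$ is a (trivial) cofibration and then arguing by composition; what you prove as the auxiliary claim (i) is exactly what the paper records afterwards as the consequence Remark~\ref{r:cocartesianliftofcof}.
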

\begin{proof}
Consider a lifting problem in $\N$ of the form
$$\xymatrix{
X\ar[r]\ar[d]_f & Z\ar[d]^{}="s" & & A\ar[r]\ar[d]_{\pi(f)}="t" & C\ar[d]\\ 
Y\ar[r]\ar@{..>}[ru]_{\smash{\tilde{g}}} & W & & B\ar[ru]_g\ar[r] & D \ar@{|->}"s"+<15pt, 0pt>;"t"-<15pt, 0pt>^{\pi}
}$$
together with a diagonal lift of its image in $\M$, as indicated. Finding the desired diagonal lift $\tilde{g}$ covering $g$ is equivalent to finding a diagonal lift $g'$ covering $g$ for the diagram
$$\vcenter{\xymatrix{
\pi(f)_!X\ar[d]\ar[r] & Z\ar[d]\\
Y\ar[r]\ar@{..>}[ru]_{g'} & W.
}}$$
It follows that a map $f: X\lrar Y$ has the relative left lifting property against all trivial fibrations in $\N$ if and only if the induced map $\pi(f)_!X\lrar Y$ does. In other words (see Remark \ref{r:retractarg}), if $\pi(f)$ is a cofibration, then $f$ is itself a cofibration in $\N$ iff $\pi(f)_!X\lrar Y$ is a cofibration in $\N$ and the result follows. A similar argument applies to the trivial cofibrations.
\end{proof}
\begin{rem}\label{r:cocartesianliftofcof}
In particular, Lemma \ref{l:cofibrationsinrelmod} implies that any coCartesian lift of a (trivial) cofibration in $\M$ is a (trivial) cofibration in $\N$ (see also \cite[Lemma 5.0.11]{HP} for an alternative proof). Dually, any Cartesian lift of a (trivial) fibration in $\M$ is a (trivial) fibration in $\N$.
\end{rem}


%
In general, the Quillen pair associated to a weak equivalence in $\M$ need not be a Quillen equivalence; to guarantee this, it suffices to require the relative model category $\pi: \N\lrar \M$ to satisfy the following additional conditions:
\begin{defn}[{\cite[Definition 5.0.8]{HP}}]\label{d:modfib}
Let $\pi: \N \lrar \M$ be a (co)Cartesian fibration which exhibits $\N$ as a relative model category over $\M$. We will say that $\pi$ is a \textbf{model fibration} if it furthermore satisfies the following two conditions:
\begin{itemize}
\item[(a)]
If $f: X \lrar Y$ is a $\pi$-coCartesian morphism in $\N$ such that $X$ is cofibrant in $\N_{\pi(X)}$ and $\pi(f) \in \W_\M$ then $f \in \W_\N$.
\item[(b)]
If $f: X \lrar Y$ is a $\pi$-Cartesian morphism in $\N$ such that $Y$ is fibrant in $\N_{\pi(Y)}$ and $\pi(f)$ is in $\W_\M$ then $f \in \W_\N$.
\end{itemize}
\end{defn}
\begin{rem}
These two conditions are equivalent to the following assertion: let $f: X\lrar Y$ be a map in $\M$ covering a weak equivalence in $\M$ such that $X\in \N_{\pi(X)}$ is cofibrant and $Y\in \N_{\pi(Y)}$ is fibrant. Then $f$ is a weak equivalence iff the induced map $\pi(f)_!X\lrar Y$ is an equivalence in $\N_{\pi(Y)}$ iff $X\lrar \pi(f)^*Y$ is an equivalence in $\N_{\pi(X)}$. In particular, $f_!\dashv f^*$ is a Quillen equivalence for any $f\in \W_\M$.
\end{rem}
The main result of \cite{HP} asserts that such model fibrations are completely classified by the functor $\M\lrar \ModCat$ sending $A\mapsto \N_A$ (and $f$ to the left Quillen functor $f_!$) and that conversely, any functor $\M\lrar\ModCat$  determines a model fibration as soon as it is \textbf{relative} (i.e.\ weak equivalences are sent to Quillen equivalences) and \textbf{proper} (see loc.\ cit.\ for more details).

Our goal in this section is to prove the following theorem:
\begin{thm}\label{t:fib}
Let $\M$ be a left proper combinatorial model category and let 
$$
\pi: \T\M\lrar \M
$$
be the projection evaluating an $(\NN\times\NN)_\ast$-diagram on the basepoint $\ast$. Then $\pi$ exhibits $\T\M$ as relative model category over $\M$. Furthermore,
the restriction $\T\M \times_{\M} \M^{\fib} \lrar \M^{\fib}$ to the full subcategory $\M^{\fib}\subseteq \M$ of fibrant objects is a model fibration, classified by
$$
\F: \M^{\fib} \lrar \ModCat; \hspace{4pt} A\mapsto \Sp(\M_{A//A}).
$$
\end{thm}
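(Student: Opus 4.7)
The plan is to verify the conditions of Definition \ref{d:relative-model} exhibiting $\pi: \T\M \to \M$ as a relative model category, and then the two additional conditions of Definition \ref{d:modfib} after restricting to $\M^{\fib}$. The classification of the resulting model fibration by $\F$ will be immediate: by construction the fiber of $\pi$ over $A$ is $(\M_{A//A})^{\NN \times \NN}$ equipped with the Bousfield localization whose localizing maps restrict fiberwise to precisely those presenting $\Sp(\M_{A//A}) = \T_A\M$.

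First I would describe the (co)Cartesian structure. Since $\ast \in (\NN \times \NN)_\ast$ is both initial and terminal, $\pi = \ev_\ast$ has both a left and a right adjoint given by the constant diagram functor, and the (co)Cartesian lifts admit explicit pointwise descriptions: for $f: A \to B$ in $\M$ and $X$ over $A$, the coCartesian lift is the levelwise pushout $f_!X(i) = X(i) \coprod_A B$, and for $Y$ over $B$ the Cartesian lift is $f^*Y(i) = Y(i) \times_B A$. Relative (co)completeness is immediate since limits and colimits in $\M^{(\NN \times \NN)_\ast}$ are computed pointwise. For preservation of structure, $\pi$ preserves cofibrations and fibrations at the Reedy level; for weak equivalences, any fibrant $A \in \M$ gives a parameterized $\Omega$-spectrum $c_A$ (its constant diagram) with $\Map^h(X, c_A) \simeq \Map^h_\M(\pi(X), A)$, so testing any stable equivalence against such $c_A$ shows $\pi$ sends stable equivalences to weak equivalences in $\M$. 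The relative $2$-out-of-$3$ property then follows from the ordinary $2$-out-of-$3$ in $\T\M$.

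I would then establish the relative factorization and lifting axioms by reducing to fiberwise model structures. Given $f: X \to Y$ in $\T\M$ and a factorization $\pi(f) = p \circ i$ in $\M$ into a (trivial) cofibration and a (trivial) fibration, I would form the coCartesian lift $X \to i_!X$, which is a (trivial) Reedy cofibration over $i$ by its pointwise-pushout description combined with the pushout-closure of cofibrations. Then I would factor the induced map $i_!X \to p^*Y$ in the fiber model category $\T_C\M$ as a (trivial) cofibration followed by a (trivial) fibration, and compose with the Cartesian lift $p^*Y \to Y$. Relative liftings are handled analogously: given a lifting problem in $\T\M$ lying over one already solved in $\M$, one pushes the domain forward and pulls the codomain back along the $\M$-level lift, reducing to a lifting problem entirely within a single fiber $\T_C\M$.

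For the restriction to $\M^{\fib}$, the two conditions of Definition \ref{d:modfib} follow from the pointwise descriptions of the (co)Cartesian lifts. For condition (a), if $f: A \to B$ is a weak equivalence and $X \in \T_A\M$ is cofibrant, then the structure maps $A \to X(i)$ are cofibrations in $\M$, so left properness makes each pointwise pushout in $X \to f_!X$ a weak equivalence, hence a stable equivalence. For condition (b), if $Y \in \T_B\M$ is fibrant with $B \in \M^{\fib}$, then each $Y(i) \to B$ is a fibration in $\M$, so the pointwise strict pullback defining $f^*Y \to Y$ computes the homotopy pullback along the weak equivalence $f$ and yields a levelwise weak equivalence. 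The main obstacle is condition (b), which genuinely requires fibrancy of $B$ in $\M$ for strict pullbacks to model homotopy pullbacks; this is precisely the reason the model fibration property is asserted only after restricting to $\M^{\fib}$.
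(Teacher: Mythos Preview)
Your overall plan is sound, and your treatment of the model-fibration conditions (a) and (b) is essentially the paper's argument in Proposition~\ref{p:reedygivesmodfib}. The gap is in the relative factorization and lifting axioms for the (trivial cofibration, fibration) pair.

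You propose to factor $i_!X \to p^\ast Y$ inside the fibre model category $\T_C\M = \Sp(\M_{C//C})$ and then compose with the Cartesian lift $p^\ast Y \to Y$. For the (cofibration, trivial fibration) factorization this is fine, since those two classes are unchanged by Bousfield localization. But for the other factorization you need two facts that are \emph{not} automatic after localizing: (i) a fibration in $\T_C\M$ is a fibration in $\T\M$, and (ii) the Cartesian lift $p^\ast Y \to Y$ of a fibration $p$ in $\M$ is a fibration in $\T\M$. Local fibrations in $\T\M$ form a proper subclass of the Reedy fibrations, and neither (i) nor (ii) follows from the pointwise description you give. In fact (i) is essentially the statement that the model structure induced on the fibre $\pi^{-1}(C)$ by $\T\M$ agrees with $\Sp(\M_{C//C})$; you assert this at the outset as ``immediate from the localizing maps'', but the paper only establishes it \emph{after} $\pi$ has been shown to be a relative model category, so invoking it here is circular.

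The paper separates the problem into two steps. First, Lemma~\ref{l:relative} shows that $\ev_\ast:\M^{(\NN\times\NN)_\ast}_{\Reedy}\to\M$ is a relative model category at the Reedy level, by the usual induction on Reedy degree. Second, Proposition~\ref{p:localization} proves abstractly that if $\pi:\N\to\M$ is a relative model category with $\N$ left proper combinatorial, then $L_S\N\to\M$ is again relative provided $\pi$ preserves $S$-local trivial cofibrations. Your argument that $\pi$ sends stable equivalences to weak equivalences (by testing against constant diagrams $c_A$) is precisely the verification of this hypothesis. The substance of Proposition~\ref{p:localization} is exactly the two points you are missing: for~(ii) one notes that $p^\ast Y \to Y$ is the pullback of $\ast_C \to \ast_D$, which is an $S$-local fibration because $\ast$ is right Quillen; for the factorization one reduces to a fibre as you do, but then factors using the \emph{global} localized model structure (which may leave the fibre), and uses left properness together with \cite[Propositions~3.3.15--16]{Hir} to push the resulting factorization back into the fibre while retaining the $S$-local fibration property. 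Only once this is done can one compare fibrant objects and conclude that the fibre model structure coincides with $\Sp(\M_{A//A})$.
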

Let us start by showing that $\pi: \T\M\lrar \M$ is a relative model category. Since $\T\M$ is a left Bousfield localization of the Reedy model structure, this will following from the following two results:
\begin{lem}\label{l:relative}
Let $\M$ be a model category, $\J$ a Reedy category and $n \geq 0$ a given integer. If $\J_{\leq n} \subseteq \J$ denotes the full subcategory spanned by the objects of degree $\leq n$, then the restriction functor 
\begin{equation}\label{e:i}
\M^{\J}_{\Reedy} \lrar \M^{\J_{\leq n}}_{\Reedy}
\end{equation} 
is a relative model category.
\end{lem}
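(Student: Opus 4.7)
The plan is to verify each clause of Definition~\ref{d:relative-model} for $i^*: \M^{\J}_\Reedy\lrar \M^{\J_{\leq n}}_\Reedy$ by exploiting the inductive nature of the Reedy model structure: the Reedy theory on $\M^\J$ is built by induction on degree, and all the structure required relative to $\M^{\J_{\leq n}}$ is obtained by continuing this induction starting from the prescribed data at degree $\leq n$.

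First I would check that $i^*$ is a bi-Cartesian fibration. Given $Y\in \M^\J$ and a morphism $f: F\lrar i^*Y$ in $\M^{\J_{\leq n}}$, a Cartesian lift $\tilde Y\lrar Y$ is built by induction on degree: set $\tilde Y|_{\J_{\leq n}}:=F$, and for each $j$ with $\deg(j)>n$, define
$$\tilde Y_j\ :=\ Y_j\times_{M_j Y} M_j\tilde Y,$$
which is well-defined by the induction hypothesis since $M_j\tilde Y$ depends only on the values of $\tilde Y$ at objects of strictly smaller degree. The universal property of the Cartesian lift then follows pointwise from the universal property of pullbacks together with that of matching objects. CoCartesian lifts are constructed dually using pushouts and latching objects. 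Since limits and colimits in $\M^\J$ and $\M^{\J_{\leq n}}$ are both computed pointwise, $i^*$ preserves them, so it is also cocomplete and complete relative to its base.

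Next, $i^*$ preserves each of the four distinguished classes of maps: for $j\in\J_{\leq n}$, the latching and matching objects $L_j$, $M_j$ depend only on the values of a diagram at objects of degree $<\deg(j)\leq n$, so $L_j(i^*X)=L_j(X)$ and $M_j(i^*X)=M_j(X)$, and hence the relative latching and matching maps at such $j$ are left unchanged by $i^*$. The relative 2-of-3 axiom is immediate because Reedy weak equivalences are levelwise: if two of $f,g,gf$ are levelwise equivalences in $\M^\J$ and the image of the third is a levelwise equivalence on $\J_{\leq n}$, then at each $j\in \J_{\leq n}$ the third is an equivalence by hypothesis and at each $j$ of degree $>n$ it is an equivalence by 2-of-3 in $\M$.

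Finally, the relative factorization and lifting axioms follow from the standard inductive proof of the Reedy model structure, now applied only to degrees $>n$. For factorization: given $f:X\lrar Y$ with a prescribed factorization $i^*X\x{\alpha}{\lrar} W\x{\beta}{\lrar} i^*Y$ of its restriction into a (trivial) cofibration followed by a (trivial) fibration, one extends $W$ to an object $\tilde W\in \M^\J$ by factoring, for each $j$ of degree $>n$, the canonical map
$$X_j\coprod_{L_jX} L_j\tilde W\ \lrar\ M_j\tilde W\times_{M_jY} Y_j$$
in $\M$ as a (trivial) cofibration followed by a (trivial) fibration, and taking $\tilde W_j$ to be the intermediate object. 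The analogous inductive construction handles the relative lifting axiom, at each $j$ of degree $>n$ reducing to a lifting problem in $\M$ between a relative latching map (a (trivial) cofibration) and a relative matching map (a (trivial) fibration). The main obstacle is purely bookkeeping: one must verify that these pointwise constructions assemble into natural transformations compatible with both the Reedy structure and the fixed data on $\J_{\leq n}$, which works out precisely because the Reedy structure is engineered for such degree-by-degree constructions.
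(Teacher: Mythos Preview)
Your proposal is correct and follows essentially the same approach as the paper: both arguments rest on the observation that the Reedy model structure is built degree-by-degree, so that the factorization and lifting axioms relative to $\M^{\J_{\leq n}}$ follow by running the standard Reedy induction only in degrees $>n$, exactly as you describe. The paper's proof is considerably terser---it simply declares the (co)Cartesian fibration and relative (co)completeness ``straightforward'' and invokes the Reedy induction for factorization and lifting without spelling out the latching/matching maps---whereas you give an explicit formula for the Cartesian lifts and the relevant pushout-product map for factorization. One small shortcut the paper takes that you might adopt: since both $\M^{\J}_{\Reedy}$ and $\M^{\J_{\leq n}}_{\Reedy}$ are already model categories, the relative $2$-out-of-$3$ axiom is automatic (the ordinary $2$-out-of-$3$ in $\M^{\J}$ already gives the conclusion without using the hypothesis on the image), so your levelwise argument, while correct, is more than is needed. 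Your justification of relative (co)completeness (``$i^*$ preserves pointwise (co)limits, hence is relatively (co)complete'') is a touch quick---strictly speaking one should note that relative (co)limits can be computed by pushing into a fiber via the (co)Cartesian structure and using (co)completeness there---but the paper is no more detailed on this point.
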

\begin{proof}
Since the domain and codomain of~\eqref{e:i} are model categories the relative $2$-out-of-$3$ property and relative closures under retracts are automatic. Furthermore, it is straightforward to check that since $\M$ is (co)complete and $\I_{\leq n} \hrar \I$ is a fully-faithful inclusion then the restriction functor $\M^{\J} \lrar \M^{\J_{\leq n}}$ is a (co)Cartesian fibration which is relatively (co)complete.

To verify that~\eqref{e:i} has relative factorizations and relative lifting properties, one proceeds by induction, analogous to the proof of the existence of the Reedy model structure: given a factorization (lifting) problem with a solution in degrees $\leq n$, the problem of extending this solution to degrees $\leq n+1$ is equivalent to a certain set of factorization (lifting) problems in $\M$, involving $(n+1)$-st latching and matching objects. Inductively choosing such factorizations (lifts) in $\M$ produces the desired compatible factorization (lift) in $\M^{\J}$.
\end{proof}
\begin{pro}\label{p:localization}
Let $\pi: \N\lrar \M$ be a (co)Cartesian fibration which exhibits $\N$ as a  relative model category over a model category $\M$, and suppose that $\N$ is a left proper combinatorial model category.
If $S$ is a set of maps in $\N$, then the functor $\pi: L_S\N\lrar\M$ is a relative model category as soon as it preserves the $S$-local trivial cofibrations.
\end{pro}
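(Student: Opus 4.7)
The plan is to verify the axioms of Definition~\ref{d:relative-model} for $\pi: L_S\N \lrar \M$, using the corresponding axioms for $\pi: \N \lrar \M$ together with the hypothesis. Since $L_S\N$ has the same underlying category, cofibrations, and trivial fibrations as $\N$, the relative (co)completeness axiom and the $(\Cof, \Fib \cap \W)$ relative weak factorization system will be inherited directly from $\pi: \N \lrar \M$. The relative 2-out-of-3 axiom is automatic, since $S$-local equivalences satisfy 2-out-of-3 as weak equivalences in the model category $L_S\N$. The substantive work is thus to establish the relative $(\Cof \cap \W_{L_S\N}, \Fib_{L_S\N})$ weak factorization system.

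For relative factorization, given $f: X \lrar Y$ in $\N$ and a factorization $\pi(X) \xrightarrow{i} C \xrightarrow{p} \pi(Y)$ of $\pi(f)$ in $\M$ with $i$ a trivial cofibration and $p$ a fibration, I would proceed in two stages. First, apply the relative $(\Cof \cap \W, \Fib)$ factorization of $\pi: \N \lrar \M$ to produce $X \xrightarrow{j_0} Z_0 \xrightarrow{q_0} Y$ covering $(i, p)$, with $j_0$ a trivial cofibration in $\N$ (hence automatically an $S$-local trivial cofibration) and $q_0$ a fibration in $\N$. The second stage is to modify $q_0$ into an $S$-local fibration via a small object argument that keeps the middle term's projection equal to $\id_C$. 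For this I would choose a set $J_S$ of generating $S$-local trivial cofibrations in $\N$ (available by combinatoriality of $L_S\N$) and decompose each $j: A \lrar B$ in $J_S$ along the coCartesian lift of $\pi(j)$---which is a trivial cofibration in $\M$ by the hypothesis---as $j = j' \circ \tilde{j}$ with $\tilde{j}: A \lrar \pi(j)_!A$. Remark~\ref{r:cocartesianliftofcof} gives that $\tilde{j}$ is a trivial cofibration in $\N$, and Lemma~\ref{l:cofibrationsinrelmod} together with 2-out-of-3 give that $j'$ is a cofibration and $S$-local equivalence lying in the fiber $\N_{\pi(B)}$. The augmented set $\{j' : j \in J_S\} \cup J_\N$ (with $J_\N$ a generating set of trivial cofibrations of $\N$) then detects $S$-local fibrations: RLP against $J_\N$ makes a map a fibration in $\N$, and RLP against any $j \in J_S$ then reduces to RLP against $j'$ by first solving the lifting problem against $\tilde{j}$ using fibrancy in $\N$.

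Running a small object argument on $q_0$ with this augmented set, interleaving the $\{j'\}$-pushouts (which cover identities in $\M$) with relative $\N$-factorizations between them to maintain fibrancy in $\N$ without shifting the projection, will produce the desired factorization $Z_0 \xrightarrow{k} Z_\infty \xrightarrow{q_\infty} Y$ with $k$ a cofibration and $S$-local equivalence covering $\id_C$ and $q_\infty$ an $S$-local fibration covering $p$. Relative lifting will then follow from relative factorization via the standard retract argument (cf.~Remark~\ref{r:retractarg}). The hardest step will be this interleaving: $\{j'\}$-pushouts cover identities in $\M$ but do not by themselves preserve fibrancy in $\N$, while naive $J_\N$-pushouts would shift the projection in $\M$ away from $C$. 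The hypothesis that $\pi$ preserves $S$-local trivial cofibrations is precisely what reconciles the two, since it allows one to decompose any cofibration-and-$S$-local-equivalence in $\N$, via the coCartesian lift of its $\M$-projection, into a trivial cofibration in $\N$ followed by a fiberwise map handled by the small object argument---thereby ensuring that the final $q_\infty$ has the right lifting property against \emph{all} cofibrations that are $S$-local equivalences.
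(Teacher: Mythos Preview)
Your approach differs from the paper's. The paper handles the $(\Cof\cap\W_{L_S\N},\Fib_{L_S\N})$ lifting and factorization axioms by separate direct arguments: for lifting it reduces to a fiberwise problem via (co)Cartesian decomposition and invokes \cite[3.3.16]{Hir} to show that the fiberwise right-hand map $Z\to p^*W$ is an $S$-local fibration; for factorization it reduces to a fiber, factors there in $L_S\N$, and then uses left properness together with \cite[3.3.15]{Hir} to correct the result back into the fiber. Your interleaved small-object argument for factorization is a legitimate alternative that avoids these citations, though the interleaving must be spelled out carefully (alternating $\{j'\}$-pushout stages with relative $(\Cof\cap\W_\N,\Fib_\N)$-factorizations over $(\id_C,p)$, and using smallness of domains to verify that the colimit map $Z_\infty\to Y$ has RLP against both $J_\N$ and $J_S$).

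There is, however, a genuine gap in your derivation of relative lifting. The retract argument of Remark~\ref{r:retractarg} runs in the wrong direction: it shows that a map covering a trivial cofibration which \emph{already has} relative LLP against $S$-local fibrations over identities must be an $S$-local trivial cofibration, not the converse you need. Factoring an $S$-local trivial cofibration $f$ over $(\pi(f),\id)$ as $q\circ i$ and exhibiting $f$ as a retract of $i$ achieves nothing on its own, since $i$ is again merely an $S$-local trivial cofibration over $\pi(f)$. What actually makes your strategy work is that the \emph{specific} $i=X\to Z_\infty$ produced by your construction is a transfinite composite of (a) trivial cofibrations in $\N$, which have relative LLP against all fibrations---hence against $S$-local fibrations---by the relative structure on $\pi:\N\to\M$, and (b) pushouts of the fiberwise maps $j'$, which cover identities in $\M$ so that their absolute LLP against any $g\in\Fib_{L_S\N}$ is automatically a relative LLP. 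This gives $i$ relative LLP against every $S$-local fibration, and the retraction then transfers it to $f$. You should make this explicit; invoking the abstract retract argument alone is not enough.
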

\begin{proof}
The relative 2-out-of-3 and retract axioms are obviously satisfied, since $L_S\N$ and $\M$ are model categories. Since the cofibrations and trivial fibrations of $L_S\N$ agree with those of $\N$, they still satisfy the relative factorization and lifting axioms. It remains to verify the relative factorization and lifting axioms for the classes of $S$-local trivial cofibrations and $S$-local fibrations.

For the lifting axiom, consider a diagram
$$\xymatrix@C=2.3pc@R=2.3pc{
X\ar[r]\ar@{^{(}->}[d]_-{\smash{\tilde{i}}}^-{\sim_S} & Z\ar@{->>}[d]^{\tilde{p}}="s" & & A\ar[r]\ar[d]_{i}="t" & C\ar[d]^p\\ 
Y\ar[r]\ar@{..>}[ru]_{\smash{\tilde{f}}} & W & & B\ar[ru]_-f\ar[r] & D \ar@{|->}"s"+<15pt, 0pt>;"t"-<15pt, 0pt>^{\pi}
}$$
together with a diagonal lift of its image in $\M$, as indicated. Here $\tilde{i}$ is an $S$-local trivial cofibration and $\tilde{p}$ is an $S$-local fibration, so that their images in $\M$ are a trivial cofibration (by assumption) and a fibration, respectively.

Arguing as in the proof of Lemma~\ref{l:cofibrationsinrelmod} we see that to find the desired diagonal lift $\tilde{f}$ covering $f$, it suffices to find a diagonal lift for the diagram
\begin{equation}\label{e:fiberlift}\vcenter{\xymatrix{
f_!i_!X\ar[d]\ar[r] & Z\ar[d]\\
f_! Y\ar[r]\ar@{..>}[ru] & p^*W
}}\end{equation}
in the fiber $\N_C$ over $C$. Since the entire diagram is already contained in $\N_C$, any diagonal lift in $\N$ will automatically be contained in the fiber $\N_C$. It therefore suffices to verify that the map $\alpha: f_!i_!X\lrar f_!Y$ is an $S$-local trivial cofibration, while the map $\beta: Z\lrar p^*W$ is an $S$-local fibration.

To see that $\alpha$ is an $S$-local trivial cofibration, observe first that it arises as the pushout of the map $i_!X\lrar Y$ along the cocartesian edge $i_!X\lrar f_!i_!X$ covering $f$. To see that $i_!X\lrar Y$ is an $S$-local trivial cofibration, note that it fits into a sequence
$$
\tilde{i}: X\lrar i_!X\lrar Y
$$
where $X\lrar i_!X$ is a coCartesian lift of the trivial cofibration $i$ and hence a trivial cofibration in $\N$, by Remark \ref{r:cocartesianliftofcof}. The map $i_!X\lrar Y$ is then a cofibration by Lemma \ref{l:cofibrationsinrelmod} and an $S$-local weak equivalence by the 2-out-of-3 property.

Similarly, the map $\beta: Z\lrar p^*W$ fits into a sequence
$$\xymatrix{
\tilde{p}: Z\ar[r] & p^*W\ar[r] & W
}$$
whose composite is the $S$-local fibration $\tilde{p}$ and where $p^*W\lrar W$ is a cartesian lift of the fibration $p: C\lrar D$ in $\M$. In particular, $\beta$ is a fibration in $\N$, before localizing at $S$ (by the dual of Lemma \ref{l:cofibrationsinrelmod}). On the other hand, $p^*W\lrar W$ fits into a pullback square
$$\xymatrix{
p^*W\ar[r]\ar[d] & W\ar[d]\\
\ast_C\ar[r] & \ast_D.
}$$
Since $\ast: \M\lrar L_S\N$ is right Quillen by assumption and $C\lrar D$ is a fibration, the map $p^*W\lrar W$ is an $S$-local fibration. To conclude that $\beta: Z\lrar p^*W$ is an $S$-local fibration as well, we can consider it as a map
$$
\beta: (Z\lrar W)\lrar (p^*W\lrar W)
$$
in the over-category $\N_{/W}$. Note that the slice model structure on $\N_{/W}$ induced from $L_S\N$ is a left Bousfield localization of the slice model structure induced from $\N$. The map $\beta$ is now a (non-local) fibration between two local objects in $\N_{/W}$, hence it is a local fibration itself \cite[Proposition 3.3.16]{Hir}. In particular, $\beta: Z\lrar p^*W$ is an $S$-local fibration, and we conclude that the desired lift in \eqref{e:fiberlift} exists.

Next, let $f: X\lrar Y$ be a map in $\N$ with a factorization of its image in $\M$ as a trivial cofibration, followed by a fibration
$$\xymatrix{
\Big(X\ar[r]^f & Y\Big)\ar@{|->}[r]^{\pi} & \Big(A\ar@{^{(}->}[r]_i^\sim & \tilde{A}\ar@{->>}[r]^p & B\Big)
}$$
We have to provide a compatible factorization of $f$. To this end, decompose $f$ as
$$\xymatrix{
X\ar[r] & i_!X\ar[r]^{f'} & p^*Y\ar[r] & Y
}$$
where the maps $X\lrar i_!X$ and $p^*Y\lrar Y$ are cocartesian and cartesian lifts of $i$ and $p$, respectively. By Remark \ref{r:cocartesianliftofcof}, the map $X\lrar i_!X$ is a trivial cofibration (even before Bousfield localization), while $p^*Y\lrar Y$ is an $S$-local fibration (being the base change of the $S$-local fibration $\ast_{\tilde{A}}\lrar \ast_B$). It therefore suffices to provide a factorization within the fiber $\N_{\tilde{A}}$ of the map $f'$ into an $S$-local trivial cofibration, followed by an $S$-local fibration.

In other words, we can reduce to the case where $f: X\lrar Y$ is contained in a fiber $\N_A$. Let
$$\xymatrix{
X\ar@{^{(}->}[r]^{\tilde{i}}_{\sim_S} & \tilde{X}\ar@{->>}[r]^{\tilde{p}} & Y
}$$
be a factorization of this map into an $S$-local trivial cofibration, followed by an $S$-local fibration. The image of this factorization is a factorization
$$\xymatrix{
A\ar@{^{(}->}[r]^{i}_{\sim_S} & \tilde{A}\ar@{->>}[r]^{p} & A
}$$
of the identity map into a trivial cofibration $i$, followed by a trivial fibration $p$. Now consider the following diagram:
$$\xymatrix{
X\ar@{^{(}->}[r]^{\sim}\ar@{=}[rd] & i_!X\ar@{^{(}->}[r]^-{\sim_S}\ar[d] & \tilde{X}\ar[d]\ar@{->>}[rd]^{\tilde{p}} \\ 
& X=p_!i_!X\ar@{^{(}->}[r]_-{\sim_S} & p_!\tilde{X}\ar[r] & Y
}$$
Here the top row is the factorization of $\tilde{i}$ as a $\pi$-coCartesian arrow, followed by an arrow in $\N_{\tilde{A}}$. The vertical map $i_!X\lrar p_!i_!X$ is a $\pi$-coCartesian lift of the map $p$, the middle square is a pushout in $\N$ and the map $p_!\tilde{X}\lrar Y$ is the universal map. The bottom row provides a factorization of the map $f: X\lrar Y$ within the fiber $\N_A$.

Since $i$ is a trivial cofibration in $\M$, the cocartesian arrow $X\lrar i_!X$ is a trivial cofibration before Bousfield localization. Since the top horizontal composite is $\tilde{i}$, it follows that the map $i_!X\lrar \tilde{X}$ is an $S$-local trivial cofibration. Its pushout $X\lrar p_!\tilde{X}$ is then an $S$-local trivial cofibration as well. Furthermore, the map $i_!X\lrar X$ is a weak equivalence in $\N$ (before Bousfield localization) by the 2-out-of-3 property. Since $\N$ is left proper, the pushout $\tilde{X}\lrar p_!\tilde{X}$ is a weak equivalence in $\N$ as well.

The desired factorization of $f$ within the fiber $\N_A$ is now given by
$$\xymatrix{
X\ar@{^{(}->}[r]^{\sim_S} & p_!\tilde{X}\ar@{^{(}->}[r]^{\sim} & X'\ar@{->>}[r]^q & Y.
}$$
Here $p_!\tilde{X}\lrar X'\lrar Y$ is a factorization within $\N_A$ into a trivial cofibration in $\N$ (before localization), followed by a fibration in $\N$ (before localization). Such a factorization exists because $\N_A$ is a model category before left Bousfield localization. The map $X\lrar X'$ is an $S$-local trivial cofibration within $\N_A$, so it remains to verify that the map $q: X'\lrar Y$ is not just a fibration in $\N$, but also an $S$-local fibration. But now observe that the map $q$ fits into a commuting triangle
$$\xymatrix{
\tilde{X}\ar@{->>}[rd]_{\tilde{p}}\ar[r]^\sim & p_!X\ar[r]^\sim & X'\ar@{->>}[ld]^q\\
& Y
}$$
where the two horizontal maps are weak equivalences in $\N$ (before localization). Since the map $\tilde{p}: \tilde{X}\lrar Y$ was an $S$-local fibration, we deduce  that the fibration $q$ is (non-locally) weakly equivalent to an $S$-local fibration. This implies that $q$ is itself an $S$-local fibration as well \cite[Proposition 3.3.15]{Hir}, so that $X\lrar X'\lrar Y$ is a fiberwise factorization of $f$ into an $S$-local trivial cofibration, followed by an $S$-local fibration.
\end{proof}
Applying Lemma \ref{l:relative} and Proposition \ref{p:localization} to the situation where $\N= \M^{(\NN\times\NN)_\ast}_{\Reedy}$, $\pi\colon \N\lrar \M$ evaluates at the unique object $\ast$ of degree $0$ and $L_S\N=\T\M$, one finds that $\pi: \T\M\lrar \M$ is a relative model category.

We will now verify that $\pi$ is a model fibration when restricted to the fibrant objects of $\M$. 
%
%
%
Let us start by verifying this before left Bousfield localization.
\begin{pro}\label{p:reedygivesmodfib}
Let $\J$ be a Reedy category and let $\J_\ast$ be the induced Reedy category obtained by adding a zero object, which is the unique object of degree $0$ in $\J_\ast$. If $\M$ is a left proper model category, then the base changed relative model category
\begin{equation}\label{e:ii}
\M^{\J}_{\Reedy} \times_{\M} \M^{\fib} \lrar \M^{\fib}
\end{equation}
is a model fibration and the functor $\M^{\fib} \lrar \ModCat$ which classifies it (under the equivalence of~\cite[Theorem 5.0.10]{HP}) is given by $A \mapsto (\M_{A//A})^{\J}_{\Reedy}$.
\end{pro}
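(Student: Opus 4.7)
The strategy is to check the two axioms of Definition~\ref{d:modfib} for the base-changed fibration and then read off the classifier from the fibers. The underlying relative model structure on $\pi\colon \M^{\J_\ast}_{\Reedy}\lrar \M$, given by evaluation at the unique degree-$0$ object $\ast$, comes from Lemma~\ref{l:relative} applied with $n=0$. The first step is to identify the fiber over $A\in \M$ with $(\M_{A//A})^{\J}_{\Reedy}$. A diagram $X\colon \J_\ast\lrar \M$ with $X(\ast)=A$ is the same datum as a diagram $\tilde X\colon \J\lrar \M_{A//A}$, where $\tilde X(i)=X(i)$ carries the retractive structure induced by the arrows $\ast\lrar i$ and $i\lrar \ast$ of $\J_\ast$. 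Reedy weak equivalences agree levelwise on either side. To match Reedy cofibrations and fibrations, I compare latching and matching objects at each $i\in \J$: the latching category in $\J_\ast$ at $i$ is the latching category in $\J$ at $i$ with $\ast$ freely adjoined as an initial object (since $\ast$ has a unique map in $\J_\ast^+$ to every other object), and dually $\ast$ becomes terminal in the matching category. The colimit in $\M$ over this augmented latching category, with value $X(\ast)=A$ at $\ast$, then computes the latching object of $\tilde X$ taken in $\M_{A//A}$ (whose initial object is $A$); dually for matching objects. Since (trivial) (co)fibrations in $\M_{A//A}$ are detected on underlying maps in $\M$, the Reedy model structure on the fiber coincides with $(\M_{A//A})^{\J}_{\Reedy}$.

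Next I describe the (co)Cartesian lifts: for $f\colon A\lrar B$ in $\M$, the coCartesian lift at $X$ is the diagram $f_!X$ with $(f_!X)(\ast)=B$ and $(f_!X)(i)=B\sqcup_A X(i)$, while the Cartesian lift at $Y$ is $f^*Y$ with $(f^*Y)(i)=A\times_B Y(i)$. Under the fiber identification these are the pointwise extensions of the standard Quillen adjunction $f_!\colon \M_{A//A}\adj \M_{B//B}\colon f^*$. For axiom (a) of Definition~\ref{d:modfib}, let $f\colon A\lrar B$ be a weak equivalence in $\M^{\fib}$ and $X$ be Reedy cofibrant in $(\M_{A//A})^{\J}_{\Reedy}$; then each $A\lrar X(i)$ is a cofibration in $\M$, so the map $X(i)\lrar B\sqcup_A X(i)$ is the pushout of the weak equivalence $f$ along a cofibration, which is a weak equivalence by left properness of $\M$. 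Hence $X\lrar f_!X$ is a Reedy equivalence. For axiom (b), let $f\colon A\lrar B$ be a weak equivalence in $\M^{\fib}$ and $Y$ be Reedy fibrant in $(\M_{B//B})^{\J}_{\Reedy}$; then each $Y(i)\lrar B$ is a fibration and each $Y(i)$ is fibrant in $\M$, so the map $A\times_B Y(i)\lrar Y(i)$ is the pullback of a weak equivalence between fibrant objects along a fibration, which is a weak equivalence by the standard model-categorical fact that such pullbacks agree with the homotopy pullback. Thus $f^*Y\lrar Y$ is a Reedy equivalence.

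Finally, by~\cite[Theorem 5.0.10]{HP}, the resulting model fibration is classified by the functor $\M^{\fib}\lrar \ModCat$ sending $A$ to its fiber, identified above as $(\M_{A//A})^{\J}_{\Reedy}$, and sending a morphism $f$ to its coCartesian transport $f_!$; this transport is the pointwise pushforward of retractive objects, and is left Quillen because $f_!\colon \M_{A//A}\lrar \M_{B//B}$ is left Quillen and commutes with the latching colimits used to detect Reedy cofibrations. The most delicate point I expect is the latching/matching comparison in the first paragraph, which underpins the fiber identification; once that is in place, the two model-fibration axioms reduce cleanly to left properness of $\M$ and to the standard fact on pullbacks of weak equivalences between fibrant objects along fibrations.
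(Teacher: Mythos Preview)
Your proof is correct and follows essentially the same approach as the paper: verify the two model-fibration axioms via left properness and the ``right properness among fibrant objects'' fact, and identify the fibers by comparing latching/matching data. The only notable difference is organizational: you identify the fibers first and argue the latching comparison conceptually (the latching category in $\J_\ast$ is $(\J^+_{/i})^{\triangleleft}$, and colimits in $\M_{A//A}$ over $K$ agree with colimits in $\M$ over $K^{\triangleleft}$ with cone value $A$), whereas the paper checks axioms (a) and (b) first and then establishes the latching comparison by an explicit $3\times 3$ diagram showing that the two relative latching maps arise as the colimit of the same grid. Your version is a mild streamlining of the same idea; just make sure you note that connected colimits (in particular the pushouts appearing in the relative latching maps) in $\M_{A//A}$ are computed in $\M$, so that the identification of latching \emph{objects} indeed upgrades to an identification of relative latching \emph{maps}.
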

\begin{proof}
Since~\eqref{e:ii} is the restriction of the relative model category \eqref{e:i}, it is a relative model category as well. Furthermore it is clear that $\pi$ is (co)Cartesian fibration which is classified by the functor $\M^{\fib} \lrar \AdjCat$ given by $A \mapsto \left(\M_{A//A}\right)^{I}$, where for every $f: A \lrar B$ the induced adjunction $\left(\M_{A//A}\right)^{\J} \lrar \left(\M_{B//B}\right)^{\J}$ is defined by 
$$
f_!(A \lrar X_\bullet \lrar A) = B \lrar X_\bullet \coprod_{A} B \lrar B
$$ and 
$$
f^*(B \lrar Y_\bullet \lrar B) = A \lrar Y_\bullet\times_{B} A \lrar A .
$$

It remains to verify conditions (a) and (b) of Definition \ref{d:modfib}. To prove (a), let 
$\iota_!: \M \lrar \M^{\J_\ast}$ be the left Kan extension functor along the inclusion $\iota:\{\ast\} \subseteq \J_\ast$. Consider a functor $\F:\J_\ast \lrar \M$ such that $\iota_!\F(\ast) \lrar \F$ is a Reedy cofibration (this is the condition that $\F$ is cofibrant in its fiber over $\M$). Let $\vphi: \F(\ast) \lrar B$ be a weak equivalence in $\M$ and let 
$$ \psi: \F \lrar \F \coprod_{\iota_!\F(\ast)} \iota_! B $$ 
be its coCartesian lift. We need to prove that $\psi$ is a weak equivalence. 
We now observe that since $\ast$ is initial in $\J_\ast$ the functor $\iota_!$ sends $A \in \M$ to the constant functor with value $A$. We hence just need to show that the map $\psi(x): \F(x) \lrar \F(x) \coprod_{\F(\ast)}B$ is a weak equivalence for every $x \in \J$. But this now follows from the fact that the map $\F(\ast) \lrar B$ is a weak equivalence, the map $\F(\ast) \lrar \F(x)$ is a cofibration, and $\M$ is left proper. The proof of (b) is similar, using that $\F(\ast)$ is assumed to be fibrant.

We now prove that this model fibration is classified by the functor
$$
\M^{\fib} \lrar \ModCat; \hspace{4pt} A \mapsto (\M_{A//A})^{\J}_{\Reedy}.
$$
In particular, we need to show that the induced model structure on $\F(A) = \left(\M_{A//A}\right)^{\J}$ coincides with the Reedy model structure.

Let $\vphi: \F \lrar \G$ be a map in $\M^{\J_\ast}$ which is contained in the fiber over an object $A$. Under the equivalence of the previous paragraph, the map $\vphi$ corresponds to a map $\vphi':\F'\lrar \G'$ of functors from $\J$ to $\M_{A//A}$, where $\F'$ and $\G'$ are simply the restrictions of $F$ and $G$ to $\J\subseteq \J_\ast$. It then suffices to show that $\vphi$ is a Reedy (trivial) cofibration in $\M^{\J_\ast}$ if and only $
\vphi'$ is a Reedy (trivial) cofibration in $(\M_{A//A})^{\J}$. 

For an object $i \in \J$, let us denote by $L^{\J}_i: (\M_{A//A})^{\J} \lrar \M_{A//A}\lrar \M$ and $L^{\J_\ast}_i: \M^{\J_\ast} \lrar \M$ the corresponding $i$'th latching object functors, both taking values in $\M$. Our goal is to show that for $i \in \J$, the map
\begin{equation}\label{e:map1}
L^{\J_\ast}_i(\G) \coprod_{L^{\J_\ast}_i(\F)}\F(i) \lrar \G(i) 
\end{equation}
is a (trivial) cofibration if and only if the map 
\begin{equation}\label{e:map2}
L^{\J}_i(\G') \coprod_{L^{\J}_i(\F')}\F'(i) \lrar \G'(i)
\end{equation}
is a (trivial) cofibration in $\M$. 
For an object $i \in \J$ let $\J^{+}_{/i} \subseteq \J_{/i}$ be subcategory whose objects are the non-identity maps $j \lrar i$ in $\J^+$ and whose morphisms are maps in $\J^+$ over $i$, and let $\J^{+}_{\ast/i}$ be the defined similarly. Note that $\J^{+}_{\ast/i}$ is obtained from $\J^{+}_{/i}$ by freely adding an initial object. Consequently, the data of a diagram $\J^{+}_{\ast/i}\lrar \M$ is equivalent (by adjunction) to the data of a diagram $\J^{+}_{/i}\lrar \M_{\F(\ast)/}$. 
It follows that 
$$ L^{\J_\ast}_i(\F) = \displaystyle\mathop{\colim}_{j \rar i \in \J^{+}_{\ast/i}}\F(j) =\left[\displaystyle\mathop{\colim}_{j \rar i \in \J^{+}_{/i}}\F(j)\right]\coprod_{\displaystyle\mathop{\colim}_{j \rar i \in \J^{+}_{/i}}\F(\ast)} \F(\ast) 
=  L^{\J}_i(\F') \coprod_{L^{\J}_i(\F(\ast))} \F(\ast) $$
and similarly
$$ L^{\J_\ast}_i(\G) = L^{\J}_i(\G') \coprod_{L^{\J}_i(\G(\ast))} \G(\ast) $$
where by abuse of notation we considered $\F(\ast)$ and $\G(\ast)$ as constant functors $\J^+_{/i} \lrar \M$ (with value $A$). We now see that both~\ref{e:map1} and~\ref{e:map2} can be identified with the colimit of the diagram
$$ \xymatrix{
\F(i) & L^{\J}_i(\F') \ar[l] \ar^{\Id}[r] & L^{\J}_i(\F') \\
\F(\ast)\ar[u]\ar[d] & L^{\J}_i(\F(\ast)) \ar[r]\ar[l]\ar[d]\ar[u] & L^{\J}_i(\F')\ar[d]\ar_{\Id}[u]  \\
\G(\ast) & L^{\J}_i(\G(\ast)) \ar[l]\ar[r] & L^{\J}_i(\G') \\
}$$
in the category $\M$: for~\ref{e:map1} we first compute the pushouts of the rows and for~\ref{e:map2} we start with the columns, using that $L^{\J}_i$ preserves colimits for the middle column.
\end{proof}

\begin{proof}[Proof of Theorem \ref{t:fib}]
Let $\pi_\text{pre}: \M^{(\NN\times\NN)_\ast}_{\Reedy}\lrar \M$ be the functor evaluating at the basepoint $\ast$. By Lemma \ref{l:relative}, this functor is a relative model category and a (co)Cartesian fibration, whose domain is left proper and combinatorial. 

To see that the functor $\pi: \T\M\lrar \M$ is a relative model category as well, we have to show that $\pi$ is a left Quillen functor for the tangent model structure, by Proposition \ref{p:localization}. For this it suffices to show that its right adjoint $\ast: \M\lrar \T\M$ sends fibrant objects to local objects, i.e.\ parameterized $\Omega$-spectra in $\M$. Indeed, this implies that $\ast$ preserves fibrations between fibrant objects, since fibrations between local objects are just fibrations in $\M^{(\NN\times\NN)_\ast}_{\Reedy}$, so that $\ast$ is right Quillen (\cite[\S 3]{Hir}). But now observe that for any fibrant object $A$, the value $\ast_A$ is simply the constant $(\NN\times\NN)_\ast$-diagram on $A$, which is certainly a Reedy fibrant $\Omega$-spectrum in $\M_{A//A}$.

We conclude that $\pi$ is a relative model category, so that its restriction
$$\xymatrix{
\pi^{\fib}: \T\M\times_\M \M^{\fib}\ar[r] & \M^{\fib}
}$$
to the fibrant objects is a relative model category as well. To see that it is a model fibration, it suffices to verify conditions (a) and (b) of Definition \ref{d:modfib}. For (a), let $f: X\lrar Y$ be a $\pi$-coCartesian map in $\T\M\times_\M \M^{\fib}$ whose image $\pi(f)$ is a weak equivalence in $\M^{\fib}$ and whose domain is cofibrant in the fiber $\T\M_{\pi(X)}$. Then $X$ is cofibrant in the fiber $\big(\M^{(\NN\times\NN)_\ast}_{\Reedy}\big)_{\pi(X)}$ as well, so by Proposition \ref{p:reedygivesmodfib}, the map $X\lrar Y$ is a (Reedy) weak equivalence, hence a stable weak equivalence.

The proof for (b) is exactly the same, using that a fibrant object in a fiber $\T\M_A$ is \emph{in particular} fibrant in $\big(\M^{(\NN\times\NN)_\ast}_{\Reedy}\big)_A$.

Finally, we show that the model fibration $\pi^{\fib}$ is classified by the functor
$$
\M^{\fib}\lrar \ModCat; \hspace{4pt} A\mapsto \Sp(\M_{A//A}).
$$
As we have already seen in Proposition \ref{p:reedygivesmodfib}, the Cartesian and coCartesian fibration underlying $\pi^{\fib}$ is given by
$$
\M^{\fib}\lrar \AdjCat; \hspace{4pt} A\mapsto \big(\M_{A//A}\big)^{\NN\times\NN}.
$$
It remains to show that for each fibrant object $A$, the restriction of the model structure on $\T\M$ to the fiber $\T\M_A$ agrees with $\Sp(\M_{A//A})$, the stable model structure on  $\big(\M_{A//A}\big)^{\NN\times\NN}$. 

Note that $\T\M$ has the same cofibrations and less fibrations than the Reedy model structure on $\M^{(\NN\times \NN)_\ast}$. Consequently, the fibers of $\pi: \T\M\lrar \M$ have the same cofibrations and less fibrations than the fibers of $\pi_\text{pre}: \M^{(\NN\times\NN)_\ast}_{\Reedy}\lrar \M$. In other words, the fiber $\T\M_A$ is a left Bousfield localization of $\pi_\text{pre}^{-1}(A)$, which Proposition \ref{p:reedygivesmodfib} identifies with the Reedy model structure on $(\M_{A//A})^{\NN\times \NN}$.

Both $\T\M_A$ and $\Sp(\M_{A//A})$ are therefore left Bousfield localizations of the Reedy model structure on $(\M_{A//A})^{\NN\times \NN}$, and it suffices to identify their fibrant objects. But by Remark \ref{r:omega}, for any fibrant object $A\in \M$, an object in $\T\M_A$ is fibrant if and only if it is a Reedy fibrant $\Omega$-spectrum in $\M_{A//A}$. These are precisely the fibrant objects in $\Sp(\M_{A//A})$ as well.
\end{proof}

\subsection{Tensor structures on the tangent bundle}\label{s:tanten}
When $\M$ is tensored over a symmetric monoidal (SM for short) model category $\bS$, the category $\M^{(\NN\times\NN)_\ast}$ inherits a natural levelwise tensor structure (see~\cite{Ba07}). In favorable cases, this levelwise tensor structure is compatible with the tangent model structure.
\begin{pro}\label{p:localtensor}
Let $\bS$ be a tractable SM model category, i.e.\ a combinatorial model category with a set $I=\{K_\alpha\lrar L_\alpha\}$ of generating cofibrations with cofibrant domain. Suppose that $\M$ is a model category which is tensored and cotensored over $\bS$ and that $L_S\M$ is a left Bousfield localization of $\M$ at a set of maps $S$ between cofibrant objects. If cotensoring with a cofibrant object in $\bS$ preserves $S$-local objects in $\M$ then $L_S\M$ is tensored and cotensored over $\bS$ as well.
\end{pro}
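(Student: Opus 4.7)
My plan is to verify that the existing tensor-cotensor bifunctors between $\bS$ and the underlying category of $\M$ define a Quillen bifunctor with respect to the localized model structure $L_S\M$. Since $L_S\M$ has the same underlying category as $\M$, the categorical tensoring and cotensoring are already in place; the only content is the pushout-product (SM7) axiom. The cofibrations of $L_S\M$ agree with those of $\M$, so for any cofibrations $i$ in $\bS$ and $j$ in $L_S\M$, the pushout-product $i\square j$ is a cofibration, and it is a trivial cofibration whenever $i$ is trivial in $\bS$ (this is immediate from $\M$ being an $\bS$-model category).

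The substantive case is: for $i: K\lrar L$ a cofibration in $\bS$ and $j: X\lrar Y$ an $S$-local trivial cofibration, I would show $i\square j$ is again an $S$-local trivial cofibration. Using tractability of $\bS$, it suffices to treat the case where $i$ is a generating cofibration, so $K$ (and hence $L$) is cofibrant. It is then enough to check that $i\square j$ has the left lifting property against every fibration $p:E\lrar B$ in $\M$ between $S$-local objects, since such maps detect $S$-local trivial cofibrations. By adjunction, this reduces to showing that $j$ has the left lifting property against the pullback-power map
$$p^{\square i}:E^L\lrar E^K\times_{B^K}B^L.$$

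By SM7 for $\M$, the map $p^{\square i}$ is a fibration in $\M$. The key point is that it is a fibration \emph{between $S$-local objects}: the corners $E^K,E^L,B^K,B^L$ are $S$-local by the standing hypothesis that cotensoring with a cofibrant object preserves $S$-locality, and the pullback $E^K\times_{B^K}B^L$ is a homotopy pullback because $E^K\lrar B^K$ is a fibration (being the cotensor of the fibration $p$ with the cofibration $\emptyset\lrar K$, again by SM7 for $\M$). A homotopy pullback of $S$-local objects along a fibration between $S$-local objects is $S$-local, as one sees by computing derived mapping spaces out of any object into the pullback as a homotopy pullback of mapping spaces. Hence $p^{\square i}$ is a fibration between $S$-local objects, so the $S$-local trivial cofibration $j$ lifts against it by the very characterization of $S$-local trivial cofibrations.

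The only genuinely non-formal step is the stability of $S$-local objects under homotopy pullback along a fibration, which I expect to be the main obstacle to verify cleanly; everything else is either adjunction bookkeeping, the assumption, or a direct appeal to SM7 for $\M$.
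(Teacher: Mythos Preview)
Your argument is correct, but it takes a different route from the paper's. The paper works entirely on the pushout-product side: since $(-)^K$ preserves $S$-local objects for cofibrant $K$, the adjunction $K\otimes(-)\dashv (-)^K$ descends to a Quillen pair on $L_S\M$, so $K_\alpha\otimes j$ and $L_\alpha\otimes j$ are $S$-local trivial cofibrations; a cobase change plus 2-out-of-3 then shows $i\square j$ is an $S$-local weak equivalence. You instead pass to the adjoint lifting problem and verify that $p^{\square i}$ is an $\M$-fibration between $S$-local fibrant objects, hence an $L_S\M$-fibration against which $j$ lifts. The paper's route is shorter and avoids two auxiliary facts you rely on: that $S$-local fibrant objects are closed under homotopy pullback, and that $\M$-fibrations between $S$-local fibrant objects suffice to detect $S$-local trivial cofibrations. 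Both facts are true and standard---but note that the detection claim, which you treat as routine, actually requires a small argument (e.g.\ via 2-out-of-6, or via the identification of such fibrations with the $L_S\M$-fibrations between fibrant objects as in \cite[Proposition~3.3.16]{Hir}), whereas the closure under homotopy pullback that you flag as the ``main obstacle'' is the more immediate of the two.
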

\begin{proof}
It is enough to check that the pushout-product of a map $i: K_\alpha\lrar L_\alpha$ in $I$ against a trivial cofibration $X\lrar Y$ in $L_S\M$ is a local weak equivalence. If cotensoring with a cofibrant object $K$ in $\bS$ preserves $S$-local objects in $\M$, then the Quillen pair $K\otimes (-): \M \adj \M : (-)^K$ descends to a Quillen pair 
$$
K\otimes (-): L_S\M \adj L_S\M : (-)^K.
$$
Since the objects $K_\alpha$ and $L_\alpha$ are cofibrant, the maps $K_\alpha\otimes X\lrar K_\alpha\otimes Y$ and $L_\alpha\otimes X\lrar L_\alpha\otimes Y$ are trivial cofibrations in $L_S\M$. Since the cobase change of a trivial cofibration is again a trivial cofibration, it follows from the 2-out-of-3 property in $L_S\M$ that the pushout-product map 
$$
K_\alpha\otimes Y\coprod_{K_\alpha\otimes X} L_\alpha\otimes X\lrar L_\alpha\otimes Y
$$
is a weak equivalence in $L_S\M$. 
\end{proof}
\begin{cor}\label{c:tensored}
Let $\M$ be a left proper combinatorial model category which is tensored and cotensored over a tractable SM model category $\bS$. Then $\T\M$ is naturally tensored and cotensored over $\bS$, where the tensor structure is given by
$$ K \otimes \left(B \lrar X_{\bullet\bullet} \lrar B\right) = K \otimes B \lrar K \otimes X_{\bullet\bullet} \lrar K \otimes B $$
and the cotensor is given by
$$ \left(B \lrar X_{\bullet\bullet} \lrar B\right)^K = B^K \lrar (X_{\bullet\bullet})^K \lrar B^K. $$
\end{cor}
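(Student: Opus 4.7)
The plan is to apply Proposition~\ref{p:localtensor} to the left Bousfield localization $\T\M = L_S\, \M^{(\NN\times\NN)_\ast}_{\Reedy}$, where $S$ is the set of maps described immediately after the definition of $\T\M$ whose localization produces the tangent bundle. The first step is to record that the Reedy model structure $\M^{(\NN\times\NN)_\ast}_{\Reedy}$ is itself tensored and cotensored over $\bS$ via the levelwise formulas
$$(K \otimes X)_{n,m} = K \otimes X_{n,m}, \qquad (X^K)_{n,m} = (X_{n,m})^K.$$
This is a general fact about Reedy diagram categories valued in an $\bS$-tensored model category (see, e.g., \cite{Ba07}): because the Reedy latching and matching objects are built from colimits and limits over categories of non-identity degree-raising/lowering arrows, they commute with the levelwise tensor and cotensor, and the pushout-product axiom reduces to the corresponding axiom for $\M$.

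The remaining content is to verify the hypothesis of Proposition~\ref{p:localtensor}, namely that for every cofibrant $K \in \bS$ the cotensor functor $(-)^K$ preserves $S$-local objects of $\M^{(\NN\times\NN)_\ast}_{\Reedy}$. Since $\M$ is left proper, the $S$-local objects can be identified explicitly: applying Lemma~\ref{l:localizingmaps} in the fiberwise spirit of Proposition~\ref{p:reedygivesmodfib}, they are precisely the Reedy fibrant parameterized $\Omega$-spectra, i.e.\ Reedy fibrant diagrams $X\colon (\NN\times\NN)_\ast \lrar \M$ such that $X_{n,m} \lrar X_\ast$ is a weak equivalence for $n \neq m$ and each diagonal square~\eqref{e:square_nn} is homotopy Cartesian.

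Because $K \otimes (-)$ is left Quillen on $\M^{(\NN\times\NN)_\ast}_{\Reedy}$ by the previous paragraph, its right adjoint $(-)^K$ is right Quillen, and hence preserves Reedy fibrant objects, weak equivalences between Reedy fibrant objects, and homotopy Cartesian squares of Reedy fibrant diagrams (the last because such a homotopy pullback is computed levelwise, where $(-)^K$ preserves fibrant pullbacks). These three properties imply that $(-)^K$ preserves the defining conditions of a parameterized $\Omega$-spectrum, so the hypothesis of Proposition~\ref{p:localtensor} is satisfied. Invoking that proposition then yields the desired tensor and cotensor structures on $\T\M$; the explicit formulas in the statement follow immediately from the levelwise definition, noting that the evident sections $K \otimes B \to K \otimes X_{\bullet\bullet}$ and $B^K \to (X_{\bullet\bullet})^K$ preserve the retractive structure. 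We do not anticipate any essential obstacle beyond the (standard) verification that the Reedy model structure on $\M^{(\NN\times\NN)_\ast}$ inherits the $\bS$-enrichment in step one.
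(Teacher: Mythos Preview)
Your proposal is correct and follows essentially the same approach as the paper: both invoke \cite{Ba07} to equip $\M^{(\NN\times\NN)_\ast}_{\Reedy}$ with the levelwise $\bS$-tensor/cotensor structure, then apply Proposition~\ref{p:localtensor} by checking that $(-)^K$ preserves parameterized $\Om$-spectra via the right Quillen property of $(-)^K$ on $\M$. Your write-up is simply a bit more explicit about the identification of the $S$-local objects and the levelwise formulas.
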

\begin{proof}
By~\cite[Lemma 4.2]{Ba07} the levelwise tensor-cotensor structure over $\bS$ is compatible with the Reedy model structure on $\M^{(\NN \times \NN)_\ast}$. To verify the condition of Proposition \ref{p:localtensor}, is suffices to prove that cotensoring with a cofibrant object $K\in\bS$ preserves parameterized $\Om$-spectra. This follows from the fact that cotensoring with $K$ preserves weak equivalences between fibrant objects and homotopy Cartesian squares involving fibrant objects, since $(-)^K: \M \lrar \M$ is right Quillen.
\end{proof}

\begin{exam}
If $\M$ is a simplicial left proper combinatorial model category, then $\T\M$ is naturally a simplicial model category.
\end{exam}

\begin{exam}
If $\M$ is a left proper SM tractable model category, then $\T\M$ is naturally tensored over $\M$.
\end{exam}

\subsection{Comparison with the $\infty$-categorical construction}\label{s:oo-stab}

Recall that any model category $\M$ (and in fact any relative category) has a canonically associated $\infty$-category $\M_\infty$, obtained by formally inverting the weak equivalences of $\M$ (see e.g.~\cite{Hin} for a thorough account, or alternatively, the discussion in~\cite[\S 2.2]{BHH}). Furthermore, a Quillen adjunction $\L :\M\adj \N : \R$ induces an adjunction of $\infty$-categories $\L_\infty : \M_\infty\adj \N_\infty: \R_\infty$ (\cite[Proposition 1.5.1]{Hin}).

Our goal in this section is to show that the construction of (parameterized) spectrum objects described in \S\ref{s:stabilization} and \S\ref{s:tanbun} is a model categorical presentation of its $\infty$-categorical counterpart. Our first step is to show that the $\infty$-category associated to the stabilization $\Sp(\M)$ of a model category $\M$ presents the universal stable $\infty$-category associated to $\M_\infty$, in the sense of \cite[Proposition 1.4.2.22]{Lur14}. 
For this it will be useful to consider the operation of stabilization in the not-necessarily pointed setting. Recall that if $\C$ is a presentable $\infty$-category then the $\infty$-category $\C_\ast \x{\df}{=} \C_{\ast/}$ of objects under the terminal object is the universal pointed presentable $\infty$-category receiving a colimit preserving functor from $\C$. Since any stable $\infty$-category is necessarily pointed we see that any colimit preserving functor from $\C$ to a stable presentable $\infty$-category factors uniquely through $\C_{\ast}$. The composition $\C \lrar \C_{\ast} \lrar \Sp(\C_{\ast})$ thus exhibits $\Sp(\C_{\ast})$ as the universal stable presentable $\infty$-category admitting a colimit preserving functor from $\C$. Given a left proper combinatorial model category $\M$ we will therefore consider $\Sp(\M_\ast)$ also as the stabilization of $\M$, where $\M_{\ast} = \M_{\ast/}$ is equipped with the coslice model structure. We note that when $\M$ is already weakly pointed we have a Quillen equivalence $\M_\ast \x{\simeq}{\adj} \M$ and so this poses no essential ambiguity. We will denote by $\Sig^{\infty}_+: \M \adj \Sp(\M_{\ast}): \Om^{\infty}_+$ the composition of Quillen adjunctions
$$ \xymatrix{
\Sig^{\infty}_+:\M \ar@<1ex>[r]^-{(-) \coprod \ast} & \M_{\ast} \ar@<1ex>[l]^-{\U} \ar@<1ex>[r]^-{\Sig^{\infty}} & \Sp(\M_{\ast}) \ar@<1ex>[l]^-{\Om^{\infty}}: \Om^{\infty}_+ \\
}.$$
We note that the above construction is only appropriate if $\M_{\ast}$ is actually a model for the $\infty$-category $(\M_{\infty})_{\ast}$. We shall begin by addressing this issue.

\begin{lem}\label{l:overmodel}
Let $\M$ be a combinatorial model category and $X\in \M$ an object. Assume either that $X$ is cofibrant or that $\M$ is left proper. Then the natural functor of $\infty$-categories $(\M_{X/})_\infty\lrar (\M_\infty)_{X/}$ is an equivalence.
\end{lem}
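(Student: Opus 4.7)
The plan is to reduce to the case where $X$ is cofibrant and then verify the equivalence by identifying mapping spaces and checking essential surjectivity. For the reduction, assume $\M$ is left proper and pick a cofibrant replacement $q\colon \tilde X \lrar X$. Pre-composition with $q$ gives a right Quillen functor $q^*\colon \M_{X/}\lrar \M_{\tilde X/}$ whose left adjoint $q_!$ sends $(\tilde X \lrar B)$ to $(X \lrar X\coprod_{\tilde X} B)$; left properness ensures that $q_!$ preserves weak equivalences between cofibrant objects (since the relevant pushouts along cofibrations preserve equivalences), and this makes $q_!\dashv q^*$ a Quillen equivalence. Passing to underlying $\infty$-categories via~\cite[Proposition 1.5.1]{Hin} yields an equivalence $(\M_{\tilde X/})_\infty \simeq (\M_{X/})_\infty$. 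On the other side, $q$ becomes invertible in $\M_\infty$, so $(\M_\infty)_{X/}\simeq (\M_\infty)_{\tilde X/}$. These two equivalences are compatible with the natural comparison functor, so we may assume $X$ is cofibrant.

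Assume now $X$ is cofibrant. The forgetful functor $U\colon \M_{X/}\lrar \M$ preserves weak equivalences and descends to $U_\infty\colon (\M_{X/})_\infty \lrar \M_\infty$; the tautological natural transformation $\mathrm{const}_X \Rightarrow U$ lifts $U_\infty$ to a functor $\phi\colon (\M_{X/})_\infty \lrar (\M_\infty)_{X/}$, which is the comparison functor in question. To see $\phi$ is fully faithful, take $(X\lrar A)$ with $X\lrar A$ a cofibration and $(X\lrar B)$ with $B$ fibrant in $\M$. The standard identification of homotopy mapping spaces in a coslice model category gives
\begin{equation*}
\Map^h_{\M_{X/}}\bigl((X\lrar A),(X\lrar B)\bigr) \;\simeq\; \mathrm{hofib}_f\bigl(\Map^h_\M(A,B) \lrar \Map^h_\M(X,B)\bigr),
\end{equation*}
with fiber taken over $f\colon X \lrar B$. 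On the $\infty$-categorical side, the mapping space between two objects of a slice $\C_{X/}$ is given by the analogous homotopy fiber (see e.g.\ \cite[Proposition 5.5.5.12]{Lur09}); applying this with $\C=\M_\infty$ and recalling that $\Map^h_\M(A,B)$ models $\Map_{\M_\infty}(A,B)$ for $A$ cofibrant and $B$ fibrant yields the required equivalence. For essential surjectivity, an object of $(\M_\infty)_{X/}$ is represented by a morphism $X\lrar Y$ in $\M_\infty$; replacing $Y$ by a fibrant object, such a morphism is presented by an honest map $X\lrar Y'$ in $\M$ (since $X$ is cofibrant), hence lies in the essential image of $\phi$.

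The main obstacle is the reduction step in the left-proper case, where one must show that the Quillen pair $q_!\dashv q^*$ is a Quillen equivalence. The delicate point is that, without cofibrancy hypotheses on $X$, one has to use left properness not only to verify that $q_!$ preserves the relevant weak equivalences, but also to identify the derived pushouts computing the derived unit and counit. Once this is in place, the remainder of the argument is a fairly formal comparison of homotopy fibers of mapping spaces on the two sides, with essential surjectivity following from the classical fact that any morphism in $\M_\infty$ between a cofibrant source and a fibrant target is represented by an actual morphism of $\M$.
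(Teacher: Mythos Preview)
Your argument is correct, but it diverges from the paper's proof after the reduction step. Both you and the paper begin by using left properness (or cofibrancy of $X$) to replace $X$ by a cofibrant object via the Quillen equivalence $q_!\dashv q^*$ on coslices. From there the approaches differ. The paper further replaces $X$ by a fibrant-cofibrant object, then invokes Dugger's theorem to transport the problem along a Quillen equivalence to a \emph{simplicial} left proper combinatorial model category, where the result is already recorded as \cite[Lemma 6.1.3.13]{Lur09}. You instead stay in $\M$ and verify directly that the comparison functor is fully faithful (by matching the homotopy-fiber descriptions of mapping spaces in the coslice model category and in the $\infty$-categorical coslice) and essentially surjective (by representing any morphism $X\to Y$ in $\M_\infty$ by an honest map after fibrant replacement of $Y$). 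Your route is more self-contained and avoids the detour through simplicial presentations; the paper's route is shorter on the page since it outsources the endgame to a citation. One small remark: the reference \cite[Proposition 5.5.5.12]{Lur09} is not the statement you want for mapping spaces in coslices; the relevant identification is the standard fiber-sequence description of mapping spaces in under-$\infty$-categories (see e.g.\ \cite[Proposition 5.5.5.12]{Lur09} is about compact generation, whereas the fact you need appears around \cite[\S 1.2.9]{Lur09} or follows from the join/slice adjunction).
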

\begin{proof}
If $\M$ is left proper then any weak equivalence $f: X\lrar X'$ induces a Quillen equivalence $f_!: \M_{X/} \adj \M_{X'/}: f^*$ and hence an equivalence between the associated $\infty$-categories. Similarly, for any model category the adjunction $f_! \dashv f^*$ is a Quillen equivalence when $f$ is a weak equivalence between cofibrant objects. It therefore suffices to prove the lemma under the assumption that $X$ is fibrant-cofibrant.

Note that for any Quillen equivalence $\L: \N \adj \M: \R$ and a fibrant object $X\in \M$, the induced Quillen pair $\N_{\R(X)/}\adj \M_{X/}$ is a Quillen equivalence as well. 
By the main theorem of~\cite{Dug} there exists a simplicial, left proper combinatorial model category $\M'$, together with a Quillen equivalence $\M'\adj\M$. We may therefore reduce to the case where $\M$ is furthermore simplicial and $X\in \M$ is fibrant-cofibrant, in which case the result follows from \cite[Lemma 6.1.3.13]{Lur09}.
\end{proof}

\begin{pro}\label{p:comparisonwithlurie}
Let $\M$ be a left proper combinatorial model category. Then the functor $(\Om^{\infty}_+)_{\infty}:\Sp(\M_{\ast})_\infty \lrar \M_\infty$ exhibits $\Sp(\M_{\ast})_\infty$ as the stabilization of $\M_\infty$ (in the sense of the universal property of~\cite[Proposition 1.4.2.23]{Lur14}). 
\end{pro}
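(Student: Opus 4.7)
The plan is to reduce to the weakly pointed case and then realize both $\Sp(\M_{*})_\infty$ and the $\infty$-categorical stabilization of $\M_\infty$ as the same full subcategory of $\Fun(\NN\times\NN, \M_\infty)$ spanned by the $\Om$-spectrum diagrams.

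First I would use Lemma \ref{l:overmodel} (applicable since $\M$ is left proper) to identify $(\M_{*})_\infty \simeq (\M_\infty)_{*}$, the universal pointed presentable $\infty$-category under $\M_\infty$. By Proposition \ref{p:induced-adj-2} applied to the Quillen equivalence $\M_{*} \rightleftarrows \M_{*}$ between different cofibrant replacements, the construction $\Sp(-)$ depends (up to Quillen equivalence) only on the underlying $\infty$-category, so it suffices to show the following: for any weakly pointed left proper combinatorial model category $\M$, the right Quillen functor $\Om^\infty: \Sp(\M)\lrar \M$ induces an $\infty$-categorical functor $\Sp(\M)_\infty \lrar \M_\infty$ which exhibits $\Sp(\M)_\infty$ as the universal stable presentable $\infty$-category mapping to $\M_\infty$ via a limit-preserving functor.

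The core of the argument is the identification of $\Sp(\M)_\infty$ with an explicit full subcategory of $\Fun(\NN\times\NN,\M_\infty)$. A standard comparison result (e.g.\ via the fact that the projective/Reedy model structure on diagrams presents the $\infty$-categorical functor category) identifies the underlying $\infty$-category of the Reedy model structure $\M^{\NN\times\NN}_{\Reedy}$ with $\Fun(\NN\times\NN, \M_\infty)$. Since $\Sp(\M)$ is a left Bousfield localization of the Reedy model structure, its underlying $\infty$-category is the reflective localization at the images of the corresponding maps, whose local objects are precisely those identified in Lemma \ref{l:localizingmaps}, i.e.\ the $\Om$-spectra. Thus $\Sp(\M)_\infty$ is equivalent to the full subcategory of $\Fun(\NN\times\NN,\M_\infty)$ spanned by diagrams with weakly zero off-diagonal entries and homotopy Cartesian diagonal squares, with $(\Om^\infty)_\infty$ given by evaluation at $(0,0)$.

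On the $\infty$-categorical side, Lurie's stabilization $\Sp(\M_\infty) = \lim\bigl(\cdots\x{\Omega}{\lrar} \M_\infty \x{\Omega}{\lrar} \M_\infty\bigr)$ admits exactly the same description (cf.\ \cite[\S 1.4.2]{Lur14}): a coherent sequence of deloopings is equivalent to the datum of an $\NN\times\NN$-diagram whose off-diagonal entries are zero objects and whose diagonal squares are Cartesian, with the universal functor $\Om^\infty_\infty$ to $\M_\infty$ given by evaluation at $(0,0)$. Under the identification of $\Sp(\M)_\infty$ above, the two descriptions match, and the universal property is preserved. The functor $(\Om^\infty_+)_\infty$ then factors through $(\M_{*})_\infty \simeq (\M_\infty)_*$ and realises $\Sp(\M_{*})_\infty$ as the stabilization of $\M_\infty$ in Lurie's sense.

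The main obstacle is the careful justification that the passage to underlying $\infty$-categories commutes with left Bousfield localization (so that the local objects of $\Sp(\M)_\infty$ really are determined by the maps in Lemma \ref{l:localizingmaps}) and that the $\infty$-categorical description of $\Sp(\M_\infty)$ as a subcategory of $\Fun(\NN\times\NN, \M_\infty)$ matches ours on the nose. The first point follows from the standard fact that the $\infty$-category underlying a left Bousfield localization $L_S\N$ is the reflective localization of $\N_\infty$ at the image of $S$; the second can be deduced either directly from the limit presentation of $\Sp$, or by checking the universal property: $\Sp(\M)_\infty$ is stable by Proposition \ref{l:stability}, the pair $(\Sig^\infty, \Om^\infty)$ of Proposition \ref{p:induced-adj} induces an adjunction on underlying $\infty$-categories, and any colimit-preserving functor from $\M_\infty$ to a stable presentable $\infty$-category extends essentially uniquely, as verified through the same $\NN\times\NN$-model.
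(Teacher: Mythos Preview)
Your proposal is correct and follows essentially the same route as the paper: reduce to the weakly pointed case via Lemma~\ref{l:overmodel}, identify $(\M^{\NN\times\NN}_{\Reedy})_\infty$ with $\Fun(\NN\times\NN,\M_\infty)$ (the paper cites \cite[Proposition~4.2.4.4]{Lur09} for this), use that $\Sp(\M)$ is a left Bousfield localization to cut down to the full subcategory $\Sp'(\M_\infty)$ of $\Om$-spectrum diagrams, and then identify this with the tower limit $\lim(\cdots\xrightarrow{\Om}\M_\infty\xrightarrow{\Om}\M_\infty)$. The only substantive difference is in the last step, which you flag as the ``main obstacle'' but leave as an assertion citing \cite[\S 1.4.2]{Lur14}: the paper actually carries this out, introducing the truncated diagram categories $\D'_n$ and invoking \cite[Lemmas~8.12--8.13]{Lur06} to show $\ev_{(n,n)}:\D'_n\to\M_\infty$ is a trivial Kan fibration, so that the restriction tower $\{\D'_n\}$ is identified with the $\Om$-tower and its limit with $\Sp'(\M_\infty)$. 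Your aside invoking Proposition~\ref{p:induced-adj-2} for ``different cofibrant replacements'' is unnecessary and a bit muddled; the paper simply passes directly to the weakly pointed case once $(\M_*)_\infty\simeq(\M_\infty)_*$ is established.
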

\begin{proof}
Since $\M$ is left proper, Lemma~\ref{l:overmodel} implies that the natural functor $\big(\M_{\ast}\big)_\infty \lrar (\M_\infty)_{\ast}$ is an equivalence. It therefore suffices to show that for a weakly pointed model category $\M$, the map $(\Om^\infty)_\infty: \Sp(\M)_\infty \lrar \M_\infty$ exhibits $\Sp(\M)_{\infty}$ as the stabilization of the pointed $\infty$-category $\M_\infty$.

Since $\Sp(\M)$ is a left Bousfield localization of $\M^{\NN \times \NN}_{\Reedy}$ (Corollary~\ref{c:existence of Sp}) it follows that the underlying $\infty$-category $\Sp(\M)_{\infty}$ is equivalent to the full subcategory of $(\M^{\NN \times \NN}_{\Reedy})_{\infty}$ spanned by the local objects, i.e., by the $\Om$-spectra. By~\cite[Proposition 4.2.4.4]{Lur09} the natural map
$$ (\M^{\NN \times \NN})_{\infty} \lrar (\M_\infty)^{\NN \times \NN} $$
is an equivalence of $\infty$-categories. We may therefore conclude that $\Sp(\M)_{\infty}$ is equivalent to the full subcategory $\Sp'(\M_{\infty}) \subseteq (\M_\infty)^{\NN \times \NN}$ spanned by those diagrams $\F:\NN \times \NN \lrar \M_{\infty}$ such that $\F(n,m)$ is zero object for $n \neq m$ and $\F$ restricted to each diagonal square is Cartesian.
We now claim that the evaluation at $(0,0)$ functor $\ev_{(0,0)}:\Sp'(\M_{\infty}) \lrar \M_{\infty}$ exhibits $\Sp'(\M_{\infty})$ as the stabilization of $\M_\infty$. By~\cite[Proposition 1.4.2.24]{Lur14} it will suffice to show that $\ev_{(0,0)}$ lifts to an equivalence between $\Sp'(\M_{\infty})$ and the homotopy limit of the tower
\begin{equation}\label{e:tower}
\cdots \lrar \M_\infty \x{\Om}{\lrar} \M_\infty \x{\Om}{\lrar} \M_\infty 
\end{equation}
The proof of this fact is completely analogous to the proof of~\cite[Proposition 8.14]{Lur06}. Indeed, one may consider for each $n$ the $\infty$-category $\D'_n$ of $(\NN_{\leq n}\times \NN_{\leq n})$-diagrams in $\M_\infty$ which are contractible off-diagonal and have Cartesian squares on the diagonal. It follows from Lemma 8.12 and Lemma 8.13 of \cite{Lur06} (as well as \cite[Proposition 4.3.2.15]{Lur09}) that the functor $\ev_{(n, n)}: \D'_n\lrar \M_\infty$ is a trivial Kan fibration (hence a categorical equivalence). Under these equivalences, the restriction functor $\D'_{n+1}\lrar \D'_n$ is identified with the loop functor $\Om: \M_\infty\lrar \M_\infty$. It follows that the homotopy limit of the tower~\ref{e:tower}
can be identified with the homotopy limit of the tower of restriction functors $\{\cdots \lrar \D'_{2}\lrar \D'_1\lrar \D'_0\}$. Since these restriction functors are categorical fibrations between $\infty$-categories, the homotopy limit agrees with the actual limit, which is the $\infty$-category $\Sp'(\M_\infty)$.
\end{proof}

\begin{cor}\label{c:BFspectrainstable}
If $\M$ is a stable model category, then the adjunction $\Sigma^\infty \dashv \Omega^\infty$ of Corollary \ref{c:existence of Sp} is a Quillen equivalence.
\end{cor}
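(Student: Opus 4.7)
The plan is to reduce the claim to an $\infty$-categorical statement, and then apply Proposition~\ref{p:comparisonwithlurie} together with the fact that a Quillen adjunction between combinatorial model categories is a Quillen equivalence if and only if the induced adjunction on underlying $\infty$-categories is an equivalence.

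First, I would unwind the relationship between $\Sp(\M)$ and $\Sp(\M_\ast)$. Since $\M$ is stable it is in particular weakly pointed, so the adjunction $\M_{\ast/} \adj \M$ (forget/add basepoint) is a Quillen equivalence. By Proposition~\ref{p:induced-adj-2}, this induces a Quillen equivalence $\Sp(\M_\ast) \simeq \Sp(\M)$, and under this equivalence the $\Omega^\infty_+$ of the previous subsection corresponds (up to equivalence) to the $\Omega^\infty$ of Proposition~\ref{p:induced-adj}. It therefore suffices to show that $(\Omega^\infty)_\infty: \Sp(\M)_\infty \lrar \M_\infty$ is an equivalence of $\infty$-categories.

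Next, I would invoke Proposition~\ref{p:comparisonwithlurie}, which identifies $(\Omega^\infty_+)_\infty: \Sp(\M_\ast)_\infty \lrar \M_\infty$ with the universal functor from $\M_\infty$ into a stable presentable $\infty$-category, i.e.\ with the stabilization map of $\M_\infty$. Now, as recorded just after Definition~\ref{d: stabilization model category}, $\M$ is stable as a model category if and only if $\M_\infty$ is stable as an $\infty$-category. But for a stable presentable $\infty$-category $\C$, its own stabilization map $\C \lrar \Sp(\C)$ (equivalently, $\Sigma^\infty_+$) is already an equivalence, so the right adjoint $\Omega^\infty_+: \Sp(\M_\ast)_\infty \lrar \M_\infty$ is an equivalence of $\infty$-categories. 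Combined with the previous paragraph, this gives that $(\Omega^\infty)_\infty: \Sp(\M)_\infty \lrar \M_\infty$ is an equivalence.

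Finally, one concludes via the standard fact that a Quillen adjunction between combinatorial model categories is a Quillen equivalence precisely when the induced adjunction on underlying $\infty$-categories is an equivalence. I do not expect any serious obstacle here; the main subtlety is simply keeping track of the distinction between $\Sp(\M)$ and $\Sp(\M_\ast)$ and ensuring that the Quillen equivalence $\M_\ast \simeq \M$ intertwines the two suspension-spectrum Quillen adjunctions, which is immediate from the naturality statement in Proposition~\ref{p:induced-adj}.
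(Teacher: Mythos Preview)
Your proposal is correct and is precisely the argument the paper intends: the corollary is stated immediately after Proposition~\ref{p:comparisonwithlurie} with no separate proof, and the intended deduction is exactly yours---use that $\M_\infty$ is stable (as noted after Definition~\ref{d: stabilization model category}), so its stabilization map is an equivalence, and then conclude via the standard fact that a Quillen adjunction of combinatorial model categories is a Quillen equivalence iff it is one on underlying $\infty$-categories. Note also that the proof of Proposition~\ref{p:comparisonwithlurie} already establishes directly that $(\Om^\infty)_\infty:\Sp(\M)_\infty\lrar\M_\infty$ exhibits the stabilization for weakly pointed $\M$, so your detour through $\Sp(\M_\ast)$ is not even needed.
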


\begin{rem}\label{r:pre-spectra}
If $\M$ is a (weakly pointed, combinatorial) model category which is not left proper we can still consider the full relative subcategory $\Sp'(\M)\subseteq \M^{\NN\times \NN}$ spanned by $\Om$-spectra (with weak equivalences the levelwise weak equivalences). The composite functor $\Sp'(\M)_\infty\lrar (\M^{\NN\times \NN})_\infty\x{\sim}{\lrar} (\M_\infty)^{\NN\times\NN}$ identifies $\Sp'(\M)_\infty$ with the full sub-$\infty$-category $\Sp'(\M_{\infty}) \subseteq (\M_\infty)^{\NN \times \NN}$ spanned by those diagrams which are contractible off diagonal and have Cartesian diagonal squares. The proof of Proposition~\ref{p:comparisonwithlurie} now implies that for \textbf{any} weakly pointed combinatorial model category $\M$, the stabilization of $\M_\infty$ can be modeled by the relative category $\Sp'(\M)$.
\end{rem}

\begin{rem}\label{r:spectra}
Corollary~\ref{p:comparisonwithlurie} can be used to compare $\Sp(\M)$ with other models for the stabilizations appearing in the literature. For example, the construction of Hovery (\cite{Hov}) using Bousfield-Friedlander spectra is also known to present the $\infty$-categorical stabilization (see~\cite[Proposition 4.15]{Rob12}). Since both models for the stabilization are combinatorial model categories they must consequently be related by a chain of Quillen equivalences (see~\cite[Remark A.3.7.7]{Lur09}). 
Another closely related model is that of \textbf{reduced excisive functors} (see e.g.~\cite{Lyd98}). Let $\SS^{\fin}_*$ denote the relative category of pointed finite simplicial sets. When $\M$ is left proper and combinatorial we may form the left Bousfield localization $\Exc_\ast(\M)$ of the projective model structure on $\M^{\SS^{\fin}_{\ast}}$ in which the local objects are the relative reduced excisive functors. Restriction along $\iota: \{S^0\} \hrar  \SS^{\fin}_{\ast}$ then yields a right Quillen functor $\iota^*: \Exc_\ast(\M) \lrar \M$ and by~\cite[Proposition 4.2.4.4]{Lur09} and~\cite[\S 1.4.2]{Lur14} the induced functor $\iota^*_\infty: (\Exc_\ast(\M))_{\infty} \lrar \M_{\infty}$ 
exhibits $(\Exc_\ast(\M))_{\infty}$ as the stabilization of $\M_\infty$.  
In this case one can even construct a direct right Quillen equivalence $\Exc_\ast(\M) \lrar \Sp(\M)$ by restricting along a suspension spectrum object $f: \NN \times \NN \lrar \SS^{\fin}_{\ast}$ with $f(0,0) \cong S^0$. 
\end{rem}

The above results show that for any fibrant-cofibrant object $A$ of a left proper combinatorial model category $\M$, the stable model category $\Sp(\M_{A//A})$ is a model for the $\infty$-categorical stabilization of $(\M_\infty)_{A//A}$. This shows that $\T_A(\M_\infty)$ is equivalence to $(\T_A\M)_\infty$. Our final goal in this section is to compare the model-categorical tangent bundle of $\M$ to the $\infty$-categorical tangent bundle of $\M_\infty$:
\begin{thm}
Let $\M$ be a left proper combinatorial model category. The induced map of $\infty$-categories $\pi_\infty: \T\M_\infty \lrar \M_\infty$ exhibits $\T\M_\infty$ as a tangent bundle to $\M_\infty$. 
\end{thm}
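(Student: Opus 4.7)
The plan is to reduce the theorem to the model fibration established in Theorem~\ref{t:fib} by restricting the base to fibrant objects, pass to underlying $\infty$-categories to obtain a coCartesian fibration of $\infty$-categories, and finally identify its classifying functor with $A\mapsto \T_A\M_\infty$ using the results of the previous subsection.

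First I would replace $\pi$ by its restriction $\pi^{\fib}:\T\M\times_\M \M^{\fib}\lrar \M^{\fib}$. The inclusion $\M^{\fib}\hrar \M$ induces an equivalence $(\M^{\fib})_\infty\simeq \M_\infty$, and I would verify that the inclusion $\T\M\times_\M \M^{\fib}\subseteq \T\M$ likewise induces an equivalence on underlying $\infty$-categories. For this it suffices to observe that every $X\in\T\M$ admits a weak equivalence to a Reedy fibrant replacement $X'$; since the projection $\pi$ is right Quillen, $\pi(X')$ is fibrant in $\M$, so $X'\in \T\M\times_\M \M^{\fib}$. Thus the inclusion is essentially surjective up to weak equivalence and so induces an equivalence of underlying $\infty$-categories. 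It therefore suffices to prove that $\pi^{\fib}_\infty$ exhibits $(\T\M\times_\M \M^{\fib})_\infty$ as the tangent bundle of $\M_\infty$.

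Next, Theorem~\ref{t:fib} shows that $\pi^{\fib}$ is a model fibration classified by the functor $\F:\M^{\fib}\lrar \ModCat$ sending $A$ to $\Sp(\M_{A//A})$. Applying the comparison between model fibrations and $\infty$-categorical coCartesian fibrations from \cite{HP}, the induced functor $\pi^{\fib}_\infty$ is a coCartesian fibration of $\infty$-categories whose classifying functor is the composite of $\F$ with the localization $\ModCat\lrar \Cat_\infty$, that is, $A\mapsto \Sp(\M_{A//A})_\infty$. It remains to identify this with the classifying functor of the $\infty$-categorical tangent bundle. Since $\M_{A//A}$ is again a left proper combinatorial model category, Proposition~\ref{p:comparisonwithlurie} gives $\Sp(\M_{A//A})_\infty\simeq \Sp((\M_{A//A})_\infty)$; applying Lemma~\ref{l:overmodel} to the slice and coslice constructions at the fibrant object $A$ identifies $(\M_{A//A})_\infty$ with $(\M_\infty)_{A//A}$. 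Combining these yields $\Sp(\M_{A//A})_\infty\simeq \T_A\M_\infty$, which matches the classifying functor of the $\infty$-categorical tangent bundle, giving the desired equivalence over $\M_\infty$.

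The main obstacle is invoking the comparison from \cite{HP} correctly, namely that the underlying functor of $\infty$-categories of a model fibration is a coCartesian fibration whose coCartesian edges are the images of the $\pi$-coCartesian edges of the model fibration, and whose classifying $\infty$-functor is obtained by postcomposing the model-categorical classifying functor with the localization $\ModCat\lrar \Cat_\infty$. Once this is in hand the rest of the argument is a matter of assembling the earlier results; the essential surjectivity reduction to $\M^{\fib}$ and the identification of fibers via Proposition~\ref{p:comparisonwithlurie} and Lemma~\ref{l:overmodel} are routine, but the general statement about model fibrations is the non-formal input.
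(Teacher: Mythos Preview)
Your overall strategy---restrict to fibrant base objects, invoke Theorem~\ref{t:fib}, and identify fibers via Proposition~\ref{p:comparisonwithlurie} and Lemma~\ref{l:overmodel}---matches the paper's. The difference lies in the key technical input, and your own diagnosis of ``the main obstacle'' is exactly right.

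You want to invoke a comparison theorem from \cite{HP} asserting that a model fibration induces a coCartesian fibration of $\infty$-categories whose classifying functor is the localization of the model-categorical one. Such a statement is not proven in \cite{HP}, and establishing it is not a formality: beyond showing that $\pi^{\fib}_\infty$ is a coCartesian fibration, you would need the fiber identifications $\Sp(\M_{A//A})_\infty\simeq \T_A\M_\infty$ to be \emph{natural} in $A$ in order to match classifying functors, and this naturality is not addressed. The paper sidesteps both issues. Rather than working with classifying functors, it uses the triangle
\[
\T\M \overset{j^*}{\lrar} \M^{[1]}_{\Reedy} \overset{\ev_1}{\lrar} \M
\]
induced by restriction along $j:[1]\hrar (\NN\times\NN)_\ast$, and appeals to Lurie's characterization of the tangent bundle as a fiberwise stabilization of $\codom:\C^{[1]}\lrar \C$. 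This reduces everything to checking, for each fibrant $A$, that the induced map on fibers $(\T\M_\infty)_A\lrar (\M_\infty)_{/A}$ exhibits the source as the stabilization of the target---a purely fiberwise condition with no naturality required. The passage from fibers of the model fibration to fibers of $\pi_\infty$ is handled by \cite[Proposition 2.1.4]{Hin}, which is the replacement for the \cite{HP} comparison you were reaching for. So your argument is salvageable, but the cleaner route is to use the $\M^{[1]}$-triangle and Hinich's result rather than the unproven straightening statement.
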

\begin{proof}
Let $j: [1]\lrar (\NN \times \NN)_\ast$ be the inclusion of the arrow $(0, 0)\lrar \ast$ in $(\NN \times \NN)_\ast$. Restriction along $j$ induces a diagram of right Quillen functors
$$\xymatrix@R=1pc{
\T\M\ar[rr]^-{j^*}\ar[rd]_{\pi} & & \M^{[1]}_{\Reedy}\ar[ld]^{\ev_1}\\
& \M. & 
}$$
which induces a triangle of $\infty$-categories 
$$\xymatrix@R=1pc{
(\T\M)_{\infty}\ar[rr]^-{j^*}\ar[rd]_{\pi} & & (\M_{\infty})^{[1]} \ar[ld]^{\ev_1}\\
& \M_{\infty}. & 
}$$

To see that this triangle exhibits $(\T\M)_\infty$ as the tangent bundle to $\M_\infty$, let $\T\M'\subseteq \T\M$ be the full relative subcategory on objects in $\T\M$ whose image in $\M$ is fibrant and let $\M'^{[1]}\subseteq \M^{[1]}$ be the full subcategory of fibrations with fibrant codomain. Both of these inclusions are equivalences of relative categories, with homotopy inverse given by a fibrant replacement functor. It will hence suffice to show that for every fibrant $A \in \M$ the induced map $((\T\M)_\infty)_A \lrar (\M_{\infty})^{[1]}_A \simeq (\M_\infty)_{/A}$ exhibits $((\T\M)_\infty)_A$ as the stabilization of $(\M_\infty)_{/A}$. But this now follows directly from Theorem~\ref{t:fib}, \cite[Proposition 2.1.4]{Hin} and the fiberwise comparison given by Proposition~\ref{p:comparisonwithlurie}.

\end{proof}

\end{document}